\documentclass[11pt,openbib]{amsart}

\setlength{\textheight}{23cm}
\setlength{\textwidth}{16cm}
\setlength{\oddsidemargin}{0cm}
\setlength{\evensidemargin}{0cm}
\setlength{\topmargin}{0cm}

\usepackage{amsmath}
\usepackage{amssymb}
\usepackage{amsthm}
\usepackage{amscd}
\usepackage{mathrsfs}

\newcommand{\ri}{\mathfrak{o}}
\newcommand{\mi}{\mathfrak{p}}

\newcommand{\Z}{\mathbf{Z}}
\newcommand{\C}{\mathbf{C}}

\newcommand{\supp}{\operatorname{supp}}

\newcommand{\e}{\sqrt{\epsilon}}
\newcommand{\p}{\varpi}

\newcommand{\U}{\mathrm{U}}

\theoremstyle{plain}
\newtheorem{thm}[equation]{Theorem}
\newtheorem{lem}[equation]{Lemma}
\newtheorem{prop}[equation]{Proposition}
\newtheorem{cor}[equation]{Corollary}
\theoremstyle{definition}
\newtheorem{defn}[equation]{Definition}

\newtheorem{rem}[equation]{Remark}

\def\Section#1{\section{#1}\setcounter{equation}{0}}

\title{On local newforms for unramified $\U(2,1)$}
\author{Michitaka Miyauchi}
\keywords{$p$-adic group, local newform}
\subjclass[2010]{Primary 22E50, 22E35}
\address{
Department of Mathematics, Faculty of Science\\
Kyoto University\\
Oiwake Kita-Shirakawa Sakyo Kyoto 606-8502 JAPAN
}
\email{miyauchi@math.kyoto-u.ac.jp}
\begin{document}
%%%%%%%%%%%%%%%%%%%%%%%%%%%%%%%%%%
\maketitle
\pagestyle{myheadings}
\markboth{}{}

\begin{abstract} 
Let $G$ be the unramified unitary group in three variables
defined over a $p$-adic field $F$ with $p \neq 2$.
In this paper,
we investigate local newforms for irreducible admissible representations of $G$.
We introduce a family of open compact subgroups
$\{K_n\}_{n \geq 0}$
of $G$
to define the local newforms 
for representations of $G$
as the $K_n$-fixed vectors.
We prove the existence of local newforms for generic representations
and 
the multiplicity one property of the local newforms for admissible representations.
\end{abstract}

\section*{Introduction}
In the modern theory of automorphic representations,
local newforms play a very important role.
In fact,
for the study of special values of automorphic $L$-functions
through their integral presentations,
the existence of local newforms is crucial.
Besides the global application,
local newforms are indispensable for the ramification theory of
representations of $p$-adic groups.
It was Casselman who established the local newform theory for $\mathrm{GL}(2)$,
which can be stated as follows.
For a non-archimedean local field $F$ of characteristic zero
with ring of integers $\ri_F$
and its maximal ideal $\mi_F$,
the local counterpart of level subgroup of $\mathrm{GL}_2(F)$
is defined by
\[
\Gamma_0(\mi_F^n) 
= 
\left(
\begin{array}{cc}
\ri_F & \ri_F \\
\mi_F^n & 1+\mi_F^n
\end{array}
\right)^\times,
\]
for $n \geq 0$.
For each  irreducible admissible
representation $(\pi, V)$ of $\mathrm{GL}_2(F)$,
define the subspace
\[
V(n) = \{v \in V\, |\, \pi(k)v = v,\ k \in \Gamma_0(\mi_F^n)\},\
n \geq 0.
\]
Then 
the following was shown by Casselman \cite{Casselman}.
%%%%%%%%%%%%%%%%%%%%%
\begin{thm}[Local newforms for $\mathrm{GL}(2)$]\label{thm:gl2}
Let $(\pi, V)$ be an irreducible generic 
representation of $\mathrm{GL}_2(F)$.

(i) There is a non-negative integer $n$ such that
$V(n) \neq \{0\}$.

(ii) Put $c(\pi) = \min \{n\, |\, V(n) \neq \{0\}\}$.
Then $\dim V(c(\pi)) = 1$.

(iii)
For each $n \geq c(\pi)$,
we have
\[
\dim V(n) = n-c(\pi)+1.
\]

(iv)
For any non-zero element $v$ in $V(c(\pi))$,
the corresponding Whittaker function $W_v$ to $v$
is non-zero at $1$.
\end{thm}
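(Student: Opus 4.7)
The plan is to work inside the Whittaker, hence Kirillov, model of $\pi$. Since $(\pi, V)$ is generic and irreducible admissible, fix a non-trivial additive character $\psi$ of $F$ and realize $V$ as its Whittaker model $\He(\pi, \psi)$. A Whittaker function $W$ is determined by its restriction to the diagonal torus, so one passes to the Kirillov function $\phi_W(a) = W\bigl(\mathrm{diag}(a, 1)\bigr)$ on $F^\times$. The first concrete task is to translate, for each $n$, the condition $v \in V(n)$ into a condition on $\phi_W$.

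To do so, decompose $\Gamma_0(\mi_F^n)$ via its Iwahori factorization into upper unipotent, diagonal, and lower unipotent parts, and analyze each contribution separately. The upper unipotent acts by $\psi(ax)$, so invariance under $x \in \ri_F$ restricts the support of $\phi_W$ to a fractional ideal determined by the conductor of $\psi$. The diagonal $\ri_F^\times$ produces an invariance of $\phi_W$ on each $\ri_F^\times$-orbit, controlled by the central character. The lower unipotent $\begin{pmatrix} 1 & 0 \\ c & 1 \end{pmatrix}$ with $c \in \mi_F^n$, decomposed into Bruhat form, yields a functional equation linking $\phi_W$ and $\pi(w)\phi_W$ for the long Weyl element $w$. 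The support and torus conditions are essentially bookkeeping; the lower unipotent condition is the genuine constraint.

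Once this dictionary is in hand, the four parts of the theorem follow in sequence. Part (i) is a soft existence statement that drops out from smoothness of $\pi$ together with an Iwasawa-type conjugation, which moves a vector fixed by some principal congruence subgroup into a $\Gamma_0(\mi_F^n)$-fixed vector for $n$ sufficiently large. For (ii), at the minimal level $n = c(\pi)$ the three Kirillov conditions combine to cut the space down to one dimension. For (iii), apply the Hecke operator $\pi\bigl(\mathrm{diag}(\p, 1)\bigr)$ to the newform to generate a chain of fixed vectors whose supports grow one step at a time, and verify that these span $V(n)$. For (iv), the explicit Kirillov description produced at step (ii) forces $\phi_W(1) \neq 0$, which is exactly $W_v(1) \neq 0$. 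The principal obstacle is (ii): the support and torus conditions alone leave a large space, so multiplicity one must be extracted from the lower unipotent functional equation, and this is where the local $L$- and $\epsilon$-factors of $\pi$ will enter and where the argument is genuinely delicate.
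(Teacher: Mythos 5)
The paper does not prove Theorem~\ref{thm:gl2}; it quotes it from Casselman and uses it only to motivate the $\mathrm{U}(2,1)$ program, so there is no internal argument to compare your attempt against. That said, your plan---pass to the Kirillov model, read off the $\Gamma_0(\mi_F^n)$-invariance through an Iwahori factorization, and build oldforms by a level-raising operator---is not Casselman's original route (he worked primarily with Jacquet modules and embeddings into principal series), but it is precisely the strategy this paper itself adopts for $\mathrm{U}(2,1)$ in Sections 3--5: compare your ``supports grow one step at a time'' with the operators $\theta', \eta$ and Lemma~\ref{lem:phi}, and your ``$W_v(1)\neq 0$'' with Theorem~\ref{thm:test}. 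As orientation, then, you are pointing in a sound direction.

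However, there is a genuine gap exactly at the crux, part (ii). You correctly note that the support and torus conditions alone leave a large space and that the lower unipotent yields ``a functional equation linking $\phi_W$ and $\pi(w)\phi_W$'' with $L$- and $\varepsilon$-factors entering---but this is a description of where the difficulty lives, not a proof of $\dim V(c(\pi))=1$. What must actually be shown is that the composite $V(c(\pi)) \to V_{U_H}/V_{U_H}(Z_2)$-type map (in $\mathrm{GL}(2)$ language: from $\Gamma_0$-invariants to the Jacquet module, or equivalently to Kirillov functions modulo $\mathcal{C}_c^\infty(F^\times)$) has at most one-dimensional image, and that the degenerate case in which the conductor of $\omega_\pi$ equals $c(\pi)$ is handled separately; in this paper the latter is the entire content of Theorem~\ref{thm:cent} via a Hecke algebra isomorphism, and the former is Lemma~\ref{lem:generic_P2} together with Lemma~\ref{lem:phi} and Theorem~\ref{thm:1}. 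Two smaller corrections: the Iwahori factorization of $\Gamma_0(\mi_F^n)$ is available only for $n\geq 1$, so $n=0$ needs a separate treatment; and with the definition used here the groups are nested, $\Gamma_0(\mi_F^{n+1})\subset\Gamma_0(\mi_F^n)$, so one of your two level-raising maps is simply the inclusion $V(n)\hookrightarrow V(n+1)$ while the other is $v\mapsto\pi(\mathrm{diag}(1,\varpi))v$---you need both, plus an \emph{upper} bound on $\dim V(n+1)-\dim V(n)$, before the resulting chain can be claimed to span $V(n)$.
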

The integer $c(\pi)$ is called {\it the conductor of} $\pi$.
It is also known that 
the $\varepsilon$-factor for $\pi$
 is a constant multiple of $q^{-sc(\pi)}$,
 where $q$ is the cardinality of the residue class field of $F$.
Similar results were obtained by Jacquet, Piatetski-Shapiro and Shalika \cite{JPSS}
and by Reeder \cite{Reeder} for $\mathrm{GL}_n(F)$.
Recently, Roberts and Schmidt \cite{RS}
established a theory of local newforms 
for irreducible representations of $\mathrm{GSp}_4(F)$
with trivial central character.
They considered paramodular subgroups of $\mathrm{GSp}_4(F)$
to define local newforms.
Our main concern is to construct a similar theory for 
unramified unitary group $\mathrm{U}(2,1)$.
We note that
for unitary group $\U(1,1)$,
there is a result by Lansky and Raghuram \cite{LR}, 
which 
determined the dimensions of the spaces of 
vectors fixed by certain open compact subgroups.
Unfortunately they do not concern the 
relation between their conductors and the exponents of 
$\varepsilon$-factors.

We assume that the residual characteristic of $F$ is odd.
Let $E$ be the unramified quadratic  extension over $F$.
Let $\ri_E$ denote the ring of integers in $E$,
$\mi_E$ the maximal ideal in $\ri_E$.
We realize our group $G$
as
$\{ g \in \mathrm{GL}_3(E)\ |\ 
{}^t\overline{g}Jg= J \}$
and denote it by $\mathrm{U}(2,1)(E/F)$,
where ${}^-$ is the non-trivial element in $\mathrm{Gal}(E/F)$
and
$J = 
\left(
\begin{array}{ccc}
0 & 0 &1\\
0 & 1 & 0\\
1 & 0 & 0
\end{array}
\right)$.
We define open compact subgroups $K_n$ of $G$
as an analog of paramodular subgroups of $\mathrm{GSp}_4(F)$;
\begin{eqnarray}
K_n
=
\left(
\begin{array}{ccc}
\ri_E & \ri_E &\mi^{-n}_E\\
\mi^n_E & 1+\mi^n_E & \ri_E\\
\mi^n_E & \mi^n_E & \ri_E
\end{array}
\right)
\cap G,
\end{eqnarray}
for $n \geq 0$.
We show the followings as the main results of this paper.
%%%%%%%%%%%%%%%%%%%%%%%%%%%%%%
\begin{thm}[Multiplicity one of newforms]\label{thm:main}
Let $(\pi, V)$ be an irreducible generic representation of $G$.
We denote by $V(n)$ the space of $K_n$-fixed vectors in $V$.

(i) 
There is a non-negative integer $n$ such that
$V(n) \neq \{0\}$.

(ii) 
Put $N_\pi = \min \{n\, |\, V(n) \neq \{0\}\}$.
Then $\dim V(N_\pi) = 1$.
\end{thm}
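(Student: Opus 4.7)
The strategy is to follow Casselman's original approach for $\mathrm{GL}(2)$ (Theorem \ref{thm:gl2}) and Roberts--Schmidt's treatment of $\mathrm{GSp}_4$: exploit the uniqueness of the Whittaker model of the generic representation $\pi$ and reduce $K_n$-fixedness to support and transformation conditions on the associated Whittaker function. Fix a non-degenerate character $\psi$ of the upper triangular unipotent subgroup $N$ of $G$, identify $V$ with its Whittaker model $\mathcal{W}(\pi, \psi)$, and denote by $W_v$ the Whittaker function attached to $v \in V$, so that $v \in V(n)$ if and only if $W_v(gk) = W_v(g)$ for every $k \in K_n$.

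For part (i), I would take any nonzero $W \in \mathcal{W}(\pi,\psi)$ and argue that the averaging $v \mapsto \int_{K_n} \pi(k)W\, dk$ produces a nonzero element of $V(n)$ for $n$ sufficiently large. The key step is to evaluate the averaged function at a diagonal element $a$ chosen deep in the positive chamber: using the Iwahori-type decomposition $K_n = (N^- \cap K_n)(A \cap K_n)(N \cap K_n)$ and taking $a$ so that $a(N^- \cap K_n)a^{-1}$ lies in a small compact subgroup fixing $W$, the integral factors and collapses to a nonzero multiple of $W(a)$.

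For part (ii), consider the restriction-to-torus map $V(N_\pi) \to \mathrm{Map}(A, \mathbb{C})$, $v \mapsto W_v|_A$, where $A$ is the maximal $F$-split torus of $G$ (of $F$-rank one). Combining the $(N,\psi)$-equivariance with right $K_{N_\pi}$-invariance forces $W_v|_A$ to be supported where $\psi|_{a(N \cap K_{N_\pi})a^{-1}} = 1$, a single-sided cone in $A$. The Iwasawa decomposition $G = NAK_0$ further reduces $W_v$ globally to finitely many functions on $A$ indexed by $K_0/(K_0 \cap K_{N_\pi})$. Hecke-type recursion relations, arising by letting elements of the $K_0$-spherical Hecke algebra act and pushing the action through the decomposition of $K_0 g K_{N_\pi}$ into right $K_{N_\pi}$-cosets, pin these functions down up to a single scalar. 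Minimality of $N_\pi$ enters crucially to close the argument: if two linearly independent vectors in $V(N_\pi)$ had the same leading torus value, their suitable difference would, by the recursions, be fixed by a strictly larger subgroup giving a nonzero element of $V(n')$ for some $n' < N_\pi$, a contradiction.

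The main obstacle lies in carrying out this Bruhat/Iwasawa bookkeeping explicitly for $\U(2,1)$: one must describe the double cosets $N \backslash G / K_n$ and the structure of $K_0 \cap K_n$ in enough detail to make the Hecke recursions and support conditions effective. The arithmetic of the hermitian form, in particular the norm-one torus $E^1$ and the interaction of $\psi$ with conjugation by elements with prescribed $E/F$-valuation, makes the combinatorics genuinely different from the $\mathrm{GL}_2$ and symplectic cases; intermediate computations will involve character sums over $E$ and the residue field extension. A subsidiary but essential point is to arrange the construction in part (i) so that the resulting vector has nonvanishing leading value on the torus, so that combined with the injectivity from part (ii) one obtains $\dim V(N_\pi) = 1$ rather than merely the upper bound.
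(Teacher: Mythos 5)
Your strategy parallels Casselman's original $\mathrm{GL}_2$ argument (restrict Whittaker functions to a torus, control the support, and pin values down by Hecke recursions), but this is not what the paper does, and as written it has a real gap. The paper's proof of $\dim V(N_\pi)=1$ splits into three cases requiring three different mechanisms. For $N_\pi\le 1$ it uses the good maximal compact $K_0$ and the Iwasawa-type decomposition $G=BK_1$ (Lemma~\ref{lem:K_1}, Proposition~\ref{prop:mult_1}). For $N_\pi\ge 2$ with $N_\pi>n_\pi$ (where $n_\pi$ is the conductor of the central character) it does use a restriction-to-torus function $\varphi_v$, but the control comes from the $P_2$-module theory of Bernstein--Zelevinsky transported to $\U(2,1)$ via Baruch, together with the level-raising operators $\eta,\theta'$ and the key bound $\dim V(n+2)-\dim V(n)\le 1$ (Theorem~\ref{thm:1}) — not from spherical Hecke recursions of the Casselman type. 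Most importantly, for $N_\pi\ge 2$ with $N_\pi=n_\pi$ this entire Kirillov/level-raising machinery fails (the hypothesis $n>n_\pi$ in Lemma~\ref{lem:eta} and Lemma~\ref{lem:pS} is essential), and the paper invokes a completely different tool: Moy's support-preserving Hecke algebra isomorphism $\mathcal{H}(G//J,\rho)\simeq\mathcal{H}(G'//J',\rho')$ with $G'=HZ$, reducing multiplicity one to the space of $K_0\cap H$-fixed vectors in a representation of $\U(1,1)$.

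Your sketch makes no such case distinction. The Casselman-style recursions you gesture at would have to produce the one-dimensionality in all cases, including $N_\pi=n_\pi$, and you give no indication of how they would be set up or why they would close. The phrase "Hecke-type recursion relations... pin these functions down up to a single scalar" is exactly where the proof has to live, and for $\U(2,1)$ the double coset structure of $K_{N_\pi}$, the interaction of $\psi$ with the hermitian form, and the norm-one torus $E^1$ do not obviously yield a clean recursion like the one for $\Gamma_0(\mathfrak p^n)\subset\mathrm{GL}_2$. Your part (i) is also weaker than the paper's: averaging a Whittaker function over $K_n$ and claiming the result is nonzero at a deep torus element is not established without controlling the factorization carefully; the paper avoids this by producing a $K_0\cap H$-fixed vector from Baruch's theorem on $H$-distinction and then applying the structure Lemma~\ref{lem:structure1} with a conjugation by $\eta^n$. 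In short, the proposal misses the central-character case split and does not supply the key lemma that the paper actually proves (the Kirillov-model step via $P_2$-theory, and the Hecke isomorphism step when $N_\pi=n_\pi$).
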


We call $V(N_\pi)$ the space of {\it newforms},
and
$V(n)$ the space of {\it oldforms},
for $n > N_\pi$.
The part (ii) of Theorem~\ref{thm:main} can be shown 
for irreducible admissible representations with non-zero $K_n$-fixed vectors.
For the uniqueness of the newforms,
we do not assume any condition on the central characters.
In the case when
the central character is trivial 
in the neighborhood of newforms,
we can show more:
%%%%%%%%%%%%%%%%%%%%%%%%%%%%%%
\begin{thm}[Dimensions of oldforms, test vectors for the Whittaker functional]
Let $(\pi, V)$ be an irreducible generic representation of $G$.
We denote by $n_\pi$ the conductor of the central character of $\pi$.
Suppose that $N_\pi >n_\pi$ and $N_\pi \geq 2$.
Then

(i) For any $n \geq N_\pi$, we have
\[
\dim V(n) = \left\lfloor \frac{n-N_\pi}{2}\right\rfloor +1.
\]

(ii)
A non-zero element $v$ in $V(N_\pi)$
is a test vector for the Whittaker functional,
that is,
$W_v(1) \neq 0$,
where
$W_v$ is the Whittaker function corresponding to $v$.
\end{thm}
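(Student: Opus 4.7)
The strategy is to work in the Whittaker model of $\pi$ and produce matching upper and lower bounds on $\dim V(n)$, from which both (i) and (ii) follow. Since $\pi$ is generic, identify $V$ with the Whittaker model $\mathcal{W}(\pi,\psi)$ with respect to a non-degenerate character $\psi$ of the unipotent radical $N$ of the standard Borel $B=TN$; for $v\in V$ write $W_v$ for the associated Whittaker function. The key reduction is that for $v\in V(n)$ the function $W_v$ is determined by its values on the diagonal torus $T$: using an Iwasawa-type decomposition $G=NTK_n$ adapted to the paramodular subgroup $K_n$, such a $W_v$ is pinned down by $W_v(t)$ for $t\in T$, and the unipotent elements contained in $K_n$ force $W_v(t)=0$ outside a region parametrised by a pair of valuations in $\Z^2$.

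For the lower bound, start from the (up to scalar) unique newform $v_0\in V(N_\pi)$ provided by Theorem~\ref{thm:main}. Construct a level-raising operator $\tau\colon V(n)\to V(n+2)$ via $\pi(\eta)$ for an explicit diagonal element $\eta\in G$ chosen so that $\eta^{-1}K_n\eta$ meets $K_{n+2}$ in finite index, and verify injectivity on $V(n)$ using the hypothesis $N_\pi>n_\pi$, which guarantees the central character is trivial on the relevant intersection $Z\cap K_{N_\pi}$ so that no cancellation occurs. The vectors $v_0,\tau v_0,\tau^2 v_0,\ldots$ are then independent because their Whittaker functions have disjoint torus support. A parallel trace-type construction producing an inclusion $V(n)\hookrightarrow V(n+1)$ (or, equivalently, a bijection between newforms of the two parities) then yields $\lfloor(n-N_\pi)/2\rfloor+1$ linearly independent vectors in $V(n)$.

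For the upper bound, translate right $K_n$-invariance of $W_v|_T$ into explicit compatibility conditions on the values $W_v(t(i,j))$, and combine with the central character relations to count free parameters. A careful count is designed to yield $\dim V(n)\le\lfloor(n-N_\pi)/2\rfloor+1$, completing (i). Claim (ii) then follows by the same support analysis: if a non-zero $v\in V(N_\pi)$ satisfied $W_v(1)=0$, propagating this vanishing through the $K_{N_\pi}$-action and the central character triviality would exhibit $W_v$ as lying in the image of the level-raising $\tau$ from $V(N_\pi-2)$ (or the analogous parity operator from $V(N_\pi-1)$), contradicting the minimality of $N_\pi$. The main obstacle will be this upper bound: the combinatorial bookkeeping on torus values must be tight enough to match $\lfloor(n-N_\pi)/2\rfloor+1$ exactly, and the roles of $N_\pi>n_\pi$ (halving effective degrees of freedom) and $N_\pi\ge 2$ (ensuring the relevant unipotent elements lie in every $K_n$ with $n\ge N_\pi$) must be tracked throughout.
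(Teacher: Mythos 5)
Your overall framework (work in the Whittaker model, restrict to the torus, use level-raising operators) is pointed in the right direction, and your lower-bound idea of distinguishing vectors via ``torus support'' matches the paper's Theorem~\ref{thm:main3}, which shows $\{\theta'^{i}\eta^{j}v : i+2j+N_\pi=n\}$ is independent by computing $\varphi_{\theta'^{i}\eta^{j}v}(1)=q^i\varphi_v(1)$ or $0$. You also correctly identify that (ii) should fall out of showing that $W_v(1)=0$ would force $v\in\eta V(N_\pi-2)=\{0\}$.

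However, your plan for the upper bound has a genuine gap. You propose to ``translate right $K_n$-invariance of $W_v|_T$ into explicit compatibility conditions\ldots and count free parameters,'' essentially a Casselman-style recursion on torus values. This does not work here for two reasons. First, the decomposition $G=BK_n$ holds only for $n\le 1$ (Lemma~\ref{lem:K_1}); for $n\ge 2$ it fails, so a Whittaker function in $V(n)$ is \emph{not} determined by its values on $T$, and ``counting parameters on $T$'' cannot yield an upper bound on $\dim V(n)$. Second, the relevant torus here is effectively one-dimensional: $T=T_H\cdot Z$ and $E^1\subset\ri_E^\times$, so the only unbounded direction is $T_H\simeq E^\times$; there is no ``pair of valuations in $\Z^2$.'' What actually closes the upper bound in the paper is the $P_2$-theory of Bernstein--Zelevinsky and Baruch: one proves that $V_{U_H}(Z_2)\simeq\mathrm{ind}_{Z_2}^{P_2}(\psi)\simeq\mathcal{C}_c^\infty(E^\times)$ is an \emph{irreducible} $P_2$-module (Proposition~\ref{prop:P_2} and Lemma~\ref{lem:generic_P2}), and combines this with the operator $S$ and the commutation identities of Lemmas~\ref{lem:eta} and~\ref{lem:u} to obtain the key Lemma~\ref{lem:pS}: $p((S-q^4)v)=0\Rightarrow v\in\eta V(n-2)$. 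That yields Lemma~\ref{lem:phi} (``$\supp\varphi_v\subset\mi_E\Rightarrow v\in\eta V(n-2)$'') and hence $\dim V(n+2)/\eta V(n)\le 1$ (Theorem~\ref{thm:1}), from which the bound $\lfloor(n-N_\pi)/2\rfloor+1$ and the test-vector statement both follow formally. Without some version of this $P_2$-module input, your ``combinatorial bookkeeping'' cannot detect when a vector with $\varphi_v$ supported in $\mi_E$ actually lies in $\eta V(n-2)$, which is precisely the hard point. Also, your proposed ``parallel trace-type construction producing an inclusion $V(n)\hookrightarrow V(n+1)$'' has no analogue in the paper and is not needed: $\theta'\colon V(n)\to V(n+1)$ is an averaging operator that is not asserted to be injective, and the dimension count is handled by the direct linear-independence computation instead.
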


We summarize the contents of this paper.
In section 1, 
we fix the basic notation for representations of the unramified unitary group
in three variables.
In section 2,
we introduce the notion of local newforms and 
prove that any irreducible generic representations of $G$
admit local newforms.
In section 3,
we define two level raising operators 
$\theta'$ and 
$\eta$
following Roberts and Schmidt \cite{RS}.
There
the $P_3$-theory plays an important role
to estimate the dimensions of the oldforms for $\mathrm{GSp}(4)$.
Here $P_n$ is the mirabolic subgroup of $\mathrm{GL}_n(F)$. 
In section 4,
we recall 
the $P_2$-theory for $\mathrm{U}(2,1)$
from Baruch \cite{Baruch},
and 
consider \lq\lq Kirillov model'' for generic representations of $G$.
In section 5,
we prove our main theorem,
that is,
multiplicity one theorem of local newforms (Theorem~\ref{thm:mult_main}).
Moreover,
we give the dimension formula of oldforms
for generic representations of $G$
whose
conductors are different from those of
their central characters (Theorem~\ref{thm:main3}).
We also show that all generic supercuspidal representations satisfy this condition.

We have not yet obtained the dimension formula of oldforms
for
representations whose 
conductors are equal to those of central characters.
Although we define the conductors
of generic representations of $G$,
here 
we do not consider  comparison of them with 
their $\varepsilon$-factors.
We hope to consider these problems in the future.

\medskip
\noindent
{\bf Acknowledgements} \
The author is grateful to Yoshi-hiro Ishikawa for
careful reading of the first draft of this paper,
and would like to thank 
Takuya Yamauchi 
and Tadashi Yamazaki for helpful discussions.
%%%%%%%%%%%%
\Section{Preliminaries}
Here we realize our unramified unitary group in three variables
and 
summarize basic notation of its subgroups 
and terminology of its representations, which are used in this paper.
Let $F$ be a non-archimedean local field of characteristic zero.
Let $\ri_F$ be the ring of integers in $F$,
$\mi_F = \p_F \ri_F$ the maximal ideal in $\ri_F$ and
$k_F = \ri_F/\mi_F$ the residue field.
We denote by $q = q_F$ the cardinality of $k_F$.
Let $|\cdot|_F$ denote the absolute value of $F$
normalized so that $|\p_F|_F = q_F^{-1}$.
We use the analogous notation 
for any non-archimedean local field.
Throughout this paper, we  assume that
the characteristic $p$ of $k_F$ is odd.

Let $E = F[\e]$ be the unramified quadratic  extension over $F$,
where $\epsilon \in \ri_F^\times
\backslash (\ri_F^\times)^2$.
Then $\p_F$ is a uniformizer of $E$.
We abbreviate $\p = \p_F$.
We denote by ${}^-$ the non-trivial element in $\mathrm{Gal}(E/F)$.
We set
$G =
\{ g \in \mathrm{GL}_3(E)\ |\ 
{}^t \overline{g}Jg = J \}$
where
\begin{eqnarray*}
J = 
\left(
\begin{array}{ccc}
0 & 0 &1\\
0 & 1 & 0\\
1 & 0 & 0
\end{array}
\right).
\end{eqnarray*}
Then $G$ is the $F$-points of
the unramified unitary group $\mathrm{U}(2,1)$
over $F$.

Let $B$ denote the Borel subgroup of $G$ consisting of 
the upper triangular matrices in $G$.
We denote by $T$ the subgroup of $B$ consisting of the diagonal matrices in $G$.
The unipotent radical $U$ of $B$ is given by
\[
U = \left\{ u(x,y) =
\left(
\begin{array}{ccc}
1 & x & y\\
0 & 1 & -\overline{x}\\
0 & 0 & 1
\end{array}
\right)\, 
\Bigg|\,
x, y \in E,\
y+\overline{y} +x\overline{x} = 0
\right\}.
\]
We denote the opposite of $U$ by $\hat{U}$;
\[
\hat{U} = \left\{ \hat{u}(x,y) =
\left(
\begin{array}{ccc}
1 & 0 & 0\\
x & 1 & 0\\
y & -\overline{x} & 1
\end{array}
\right)\, 
\Bigg|\,
x, y \in E,\
y+\overline{y} +x\overline{x} = 0
\right\}.
\]

We embed the group $H = U(1,1)(E/F)$ into $G$ as
\begin{eqnarray*}
H = \left\{
\left(
\begin{array}{ccc}
a & 0 & b\\
0 & 1 & 0\\
c & 0 & d
\end{array}
\right) \in G
\right\}.
\end{eqnarray*}
We put $B_H = B\cap H$, $\hat{U}_H =\hat{U}\cap H$,
\begin{eqnarray*}
& U_H = U\cap H = \left\{
u(y) = \left(
\begin{array}{ccc}
1 & 0 & y\\
0 & 1 & 0\\
0 & 0 & 1
\end{array}
\right)\, \Bigg|\,
y \in E,\ y+\overline{y} = 0
\right\}
\end{eqnarray*}
and
\begin{eqnarray*}
& T_H = T\cap H = \left\{
t(a) = \left(
\begin{array}{ccc}
a & 0 & 0\\
0 & 1 & 0\\
0 & 0 & \overline{a}^{-1}
\end{array}
\right)\, \Bigg|\,
a \in E^\times
\right\}.
\end{eqnarray*}

We fix a non-trivial additive character $\psi_E$ of $E$
with conductor $\ri_E$,
and define a character $\psi$ of $U$ by
\[
\psi(u) = \psi_E(x),\ \mathrm{for}\
u = u(x,y) \in U.
\]
For  any irreducible admissible representation $(\pi, V)$
of $G$,
it is well-known that 
\[
\dim \mathrm{Hom}_U(\pi, \psi) \leq 1.
\]
We say that $(\pi, V)$ is {\it generic} if
$\mathrm{Hom}_U(\pi, \psi) \neq \{0\}$.
If $(\pi, V)$ is an irreducible generic representation of $G$,
then
by Frobenius reciprocity, we have
\[
\mathrm{Hom}_U(\pi, \psi) 
\simeq \mathrm{Hom}_G(\pi, \mathrm{Ind}_U^G \psi) \simeq \C.
\]
So there exists a unique embedding of
$\pi$ into $\mathrm{Ind}_U^G \psi$
up to scalar.
The image $\mathcal{W}(\pi, \psi)$ of $V$ is called {\it the Whittaker model of} $\pi$.
By a non-zero functional $l \in \mathrm{Hom}_U(\pi, \psi)$,
which 
is called {\it the Whittaker functional},
we define {\it the Whittaker function} $W_v \in \mathcal{W}(\pi, \psi)$
associated to $v \in V$ by
\[
W_v(g) = l(\pi(g)v),\ g \in G.
\]

There is an isomorphism $\iota$
between  the center $Z$ of $G$
and the norm-one subgroup $E^{1}$ of $E^\times$,
given by
\[
\iota:
E^{1} \simeq Z;
\lambda \mapsto 
\left(
\begin{array}{ccc}
\lambda & & \\
& \lambda & \\
& & \lambda
\end{array}
\right).
\]
We set open compact subgroups
of $E^{1}$ as
\[
E^{1}_0 = E^{1},\
E^{1}_n = E^{1}\cap (1+\mi_E^n),\ \mathrm{for}\ n \geq 1.
\]
Then $\{E_n^{1}\}_{n \geq 0}$ gives a filtration of 
$E^{1}$.
For an irreducible admissible representation $(\pi, V)$ of $G$,
we denote by $\omega_\pi$ the central character of $\pi$.
We define {\it the conductor} $n_\pi$ {\it of} $\omega_\pi$
by
\begin{eqnarray*}
n_\pi = \mathrm{min}\{n \geq 0\, |\, \omega_\pi|_{Z_n} = 1\},
\end{eqnarray*}
where  $Z_n = \iota(E^{1}_n)$.
%%%%%%%%%%%%%%%%%%%%%%%%%%%%%%%%%%%%%
%%%%%%%%%%%%%%%%%%%%%%%%%%%%%%%%%%%%%
\Section{Local newforms}
In this section,
we introduce the notion of newforms for representations of $G$.
A newform for an irreducible admissible representation of $G$
is a vector which is fixed by a certain open compact subgroup of $G$.
We prove that every 
irreducible generic representation of $G$
admits a newform (Theorem~\ref{thm:gen_new}).

We introduce a family of  open compact subgroups $\{K_n\}_{n \in \Z_{\geq 0}}$ of $G$
by
\begin{eqnarray*}
K_n
=
\left(
\begin{array}{ccc}
\ri_E & \ri_E &\mi^{-n}_E\\
\mi^n_E & 1+\mi^n_E & \ri_E\\
\mi^n_E & \mi^n_E & \ri_E
\end{array}
\right)
\cap G,
\end{eqnarray*}
which is used to define our local newforms.
We set
\begin{eqnarray*}
t_{n}
= \left(
\begin{array}{ccc}
 & & \p^{-n}\\
 & 1 & \\
 \p^{n} & &
\end{array}
\right) \in K_n.
\end{eqnarray*}
The group $K_0$ is a good maximal compact subgroup of $G$.
So we have the Iwasawa decomposition $G =BK_0$.
Moreover, we obtain the the following decomposition
for $K_1$:
%%%%%%%%%%%%%%%%%%
\begin{lem}\label{lem:K_1}
$G = B K_1$.
\end{lem}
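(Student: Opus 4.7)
The plan is to combine the Iwasawa decomposition $G = BK_0$ with the finite Bruhat decomposition obtained by reducing $K_0$ modulo $\mi_E$, pivoting on the observation that the nontrivial Weyl element already lies in $BK_1$.

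The key computation is the factorization
\[
w_0 := J = t(\p) \cdot t_1, \qquad t(\p) = \mathrm{diag}(\p, 1, \p^{-1}) \in T \subseteq B,\quad t_1 \in K_1,
\]
which shows $w_0 \in BK_1$.

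Let $\overline{G} := \U(2,1)(\ri_E/\mi_E)$ and let $\pi : K_0 \twoheadrightarrow \overline{G}$ denote reduction modulo $\mi_E$; surjectivity comes from the smoothness of the hyperspecial model of $G$. The pro-$p$ kernel $K_0^+ = K_0 \cap (1 + \p M_3(\ri_E))$ is contained in $K_1$ by direct inspection of entries (each entry lies in the prescribed range for $K_1$), and likewise $U \cap K_0 \subseteq K_1$. The finite Bruhat decomposition of the relative-rank-one group $\overline{G}$ reads $\overline{G} = \overline{B} \sqcup \overline{B}\,\overline{w}_0\,\overline{U}$, where $\overline{B} = \pi(B \cap K_0)$ and $\overline{U} = \pi(U \cap K_0)$. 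Given $k \in K_0$, I would argue by cases on which cell contains $\pi(k)$. If $\pi(k) \in \overline{B}$, lift to $b \in B \cap K_0$ with $\pi(b) = \pi(k)$, so that $b^{-1}k \in K_0^+ \subseteq K_1$ and $k \in BK_1$. If $\pi(k) \in \overline{B}\,\overline{w}_0\,\overline{U}$, decompose and lift to $b \in B \cap K_0$, $w_0 = J \in K_0$, and $u \in U \cap K_0$, obtaining $k = b\,w_0\,u\,k^+$ for some $k^+ \in K_0^+$. Substituting the factorization $w_0 = t(\p) t_1$ yields $k = \bigl(b \, t(\p)\bigr) \cdot \bigl(t_1 \, u \, k^+\bigr)$, and the second factor lies in $K_1$ since $K_1$ is a group containing $t_1$, $u$, and $k^+$. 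Combined with $G = BK_0$, this gives $G = BK_1$.

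The main obstacle is identifying the factorization $w_0 = t(\p) t_1$; it pinpoints exactly why $K_1$ is the correct analogue of a paramodular subgroup in this setting. Once this is in hand, the proof reduces to the finite-field Bruhat decomposition together with routine verifications that $K_0^+$ and $U \cap K_0$ lie inside $K_1$.
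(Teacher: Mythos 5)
Your proposal is correct and takes essentially the same approach as the paper: reduce $K_0$ modulo its pro-$p$ radical to $\U(2,1)(k_E/k_F)$, invoke the finite Bruhat decomposition, and absorb the nontrivial Weyl element via the factorization $J = t_0 = t(\p)\,t_1 \in B K_1$ (the paper phrases this as $G = BW(K_0\cap K_1)$ with $W = \{1, t_0\}$ and $t_0 \in Bt_1 \subset BK_1$). The only difference is presentational: you spell out the case analysis on Bruhat cells that the paper compresses into the single displayed decomposition.
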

%%%%%%%%%%%%%%%%%%
\begin{proof}
The quotient $K_0/(1+M_3(\mi_E))\cap G$ is isomorphic to
$\mathrm{U}(2,1)(k_E/k_F)$.
Using the Bruhat decomposition of $\U(2,1)(k_E/k_F)$,
we get
\begin{eqnarray}\label{eq:decomp}
G = BK_0 = BW(K_0\cap K_1),
\end{eqnarray}
where $W = \{1, t_0\}$.
Since $t_0 \in Bt_1 \subset BK_1$,
we obtain $G = BK_1$.
\end{proof}

We define an open compact subgroup $U(\ri_E)$ of $U$
by
\begin{eqnarray*}
U(\ri_E)
=
\left(
\begin{array}{ccc}
1 & \ri_E & \ri_E\\
0 & 1 & \ri_E\\
0 & 0 & 1
\end{array}
\right)\cap G.
\end{eqnarray*}
%%%%%%%%%%%%
\begin{lem}\label{lem:structure1}
For $n \geq 0$,
the group $K_n$ is generated by $K_n\cap H$ and $U(\ri_E)$.
\end{lem}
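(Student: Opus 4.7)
Let $K_n' := \langle K_n \cap H,\, U(\ri_E)\rangle$. Since both generators lie in $K_n$, the inclusion $K_n' \subseteq K_n$ is automatic; the task is to prove the reverse. My plan is to establish an Iwahori-type factorization $K_n = (K_n \cap \hat U)(K_n \cap T)(K_n \cap U)$ and verify that each factor lies in $K_n'$.

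The easy factor is $K_n \cap U$. Any $u(x,y) \in K_n \cap U$ has $x \in \ri_E$, $y \in \mi_E^{-n}$, and $y + \bar y = -x\bar x$. Because the residual characteristic is odd, $y_0 := -x\bar x/2 \in \ri_E$ satisfies $y_0 + \bar y_0 = -x\bar x$, so $u(x, y_0) \in U(\ri_E)$; the remainder $y - y_0 \in \mi_E^{-n}$ is trace-zero, so $u(0, y - y_0) \in U_H \cap K_n \subseteq K_n \cap H$, and the group law yields $u(x,y) = u(x,y_0)\,u(0, y - y_0) \in K_n'$. For $K_n \cap \hat U$, I exploit $t_n \in K_n \cap H$: a direct matrix computation gives $t_n\, u(x,y)\, t_n = \hat u(-\p^n\bar x,\, \p^{2n} y)$, so $t_n U(\ri_E) t_n$ realizes $\{\hat u(x', y') : x' \in \mi_E^n,\ y' \in \mi_E^{2n}\}$, and any $\hat u(x',y') \in K_n \cap \hat U$ (with $y' \in \mi_E^n$) decomposes, by the same trace-halving trick applied to $y'$ with $x'\bar x' \in \mi_F^{2n}$, as a product of an element of $t_n U(\ri_E) t_n$ with some $\hat u(0, z) \in \hat U_H \cap K_n \subseteq K_n \cap H$.

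Next I would invoke the Iwahori-type factorization. For $n \geq 1$ this follows either from general parahoric theory or from a direct pivot argument exploiting that $k_{22} \in 1 + \mi_E^n$ is a unit; for $n = 0$ one instead reduces modulo $\mi_E$ and uses the Bruhat decomposition of $K_0/K_0^+ \cong \mathrm{U}(2,1)(k_E/k_F)$ (which is generated by the images of $K_0 \cap H$ and $U(\ri_E)$), noting that the pro-$p$ radical is already contained in $K_n'$ by the previous paragraph. What remains is $K_n \cap T \subseteq K_n'$. Any $\mathrm{diag}(a, b, \bar a^{-1}) \in K_n \cap T$ (with $a \in \ri_E^\times$, $b \in E^1_n$) factors, via $b\bar b = 1$, as $\mathrm{diag}(a/b,\, 1,\, \overline{a/b}^{-1})\cdot \mathrm{diag}(b, b, b)$; the first factor lies in $T_H \cap K_n \subseteq K_n \cap H$, leaving the central subgroup $\{\mathrm{diag}(\lambda, \lambda, \lambda) : \lambda \in E^1_n\}$ to be exhibited in $K_n'$.

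The main obstacle is precisely this last step, because central elements are invisible in $K_n \cap H$ and must be built from commutators. An explicit computation of $u(x,y)\, \hat u(0, z)\, u(x,y)^{-1}\, \hat u(0, -z)$ (with $\hat u(0, z) \in \hat U_H \cap K_n$, hence in $K_n \cap H$) yields an element whose diagonal is $\mathrm{diag}(1 + yz,\, 1 + x\bar x z,\, 1 + \bar y z)$ modulo higher powers of $z$, and suitable products of such commutators, with parameters chosen so that the off-diagonal contributions cancel and the three diagonal entries agree modulo $\mi_E^{2n}$, should realize every element of $Z \cap K_n$ after $p$-adic iteration, using that $E^1_n$ is pro-$p$ for $n \geq 1$. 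Verifying this commutator argument rigorously is the real technical core of the lemma.
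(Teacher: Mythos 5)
Two things go wrong, one fixable and one not.

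The less serious problem is your Iwahori factorization $K_n = (K_n\cap\hat U)(K_n\cap T)(K_n\cap U)$ for $n\geq 1$. This identity is false: any product $\hat u\, t\, u$ with $\hat u\in\hat U$, $t\in T$, $u\in U$ has $(1,1)$-entry equal to $t_{11}\in\ri_E^\times$, yet $t_n\in K_n$ has $(1,1)$-entry $0$. (And $K_n$ is not a parahoric for $n\geq 1$, since it is not contained in any maximal compact, so "general parahoric theory" does not apply.) The paper circumvents this by first writing $K_n = (K_n\cap H)\,K''$ where
\[
K'' = \left(\begin{array}{ccc} 1+\mi_E^n & \ri_E & \ri_E \\ \mi_E^n & 1+\mi_E^n & \ri_E \\ \mi_E^{2n} & \mi_E^n & 1+\mi_E^n \end{array}\right)\cap G,
\]
a subgroup whose diagonal entries are honest units and which therefore does admit an Iwahori decomposition. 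Your computations for the $\hat U$-, $T$-, and $U$-pieces go through essentially unchanged for $K''$, so this part of your argument is salvageable with the paper's substitute decomposition.

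The decisive gap is the step you flag yourself as "the real technical core": producing $Z_n$ from commutators $u(x,y)\,\hat u(0,z)\,u(x,y)^{-1}\,\hat u(0,-z)$ with $\hat u(0,z)\in\hat U_H\cap K_n$. That approach is structurally impossible, not merely hard to make rigorous. Any commutator $[g,h]$ has determinant $1$, hence lies in $\mathrm{SU}(2,1)$. But a central element $\iota(\lambda)=\mathrm{diag}(\lambda,\lambda,\lambda)$ with $\lambda\in E^1_n$ lies in $\mathrm{SU}(2,1)$ only when $\lambda^3=1$; since $E^1_n$ is a pro-$p$ group for $n\geq 1$ and $p$ is odd, cubing is injective on it (at least for $p\neq 3$), so $Z_n\cap\mathrm{SU}(2,1)=\{1\}$. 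Therefore no amount of commutator bookkeeping, "$p$-adic iteration," or cancellation of off-diagonal terms can produce a nontrivial element of $Z_n$. (Your own determinant check confirms this: the leading diagonal $\mathrm{diag}(1+yz,\,1+x\bar x z,\,1+\bar y z)$ has determinant $1+ (y+\bar y+x\bar x)z + O(z^2) = 1 + O(z^2)$.) The paper's proof at this exact point is an explicit \emph{non}-commutator identity, equation (2.4),
\[
\mathrm{diag}(\p^n z,\, -\overline z/z,\, \p^{-n}\overline z^{-1}) = \hat u(\overline y/z, 1/\overline z)\, u(y,z)\, \hat u(\overline y/\overline z, 1/\overline z)\, t_n,
\]
in which the factor $t_n\in K_n\cap H$ (determinant $-1$) supplies precisely the determinant that commutators cannot, and one then peels off the central part by multiplying by a suitable $t(a)\in T_H\cap K_n$. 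You would need to replace your commutator plan with a construction of this type; without it the proof does not close.
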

%%%%%%%%%%%%
\begin{proof}
Let $K'$ denote the subgroup of $G$
generated by $K_n\cap H$ and $U(\ri_E)$.
We prove that $K'$ contains $K_n$.

(i) Suppose that $n \geq 1$.
Put
\[
K'' = 
\left(
\begin{array}{ccc}
1+\mi^n_E & \ri_E &\ri_E\\
\mi^n_E & 1+\mi^n_E & \ri_E\\
\mi^{2n}_E & \mi^n_E & 1+\mi^n_E
\end{array}
\right)
\cap G.
\]
It is easy to check that
$K_n = (K_n\cap H) K''$.
So it is enough to prove that $K'' \subset K'$.

The group $K''$ has an Iwahori decomposition
$K'' = (K''\cap \hat{U})(K''\cap T)(K''\cap U)$.
The group $K'$ contains $K''\cap U = U(\ri_E)$.
Since $t_n \in K_n \cap H$, 
$K'$ contains $K''\cap \hat{U} = t_n U(\ri_E) t_n$.
We have $K''\cap T = (K''\cap T_H) (K''\cap Z)
= (K''\cap T_H)Z_n$.
Since $K''\cap T_H \subset K_n \cap H \subset K'$,
it suffices to show $Z_n\subset K'$.
We note that
\begin{eqnarray}\label{eq:deco}
K_n\cap U,\, K_n\cap \hat{U} \subset K'
\end{eqnarray}
because
$K_n\cap U = (K''\cap U)(K_n\cap U_H)$
and 
$K_n\cap \hat{U} = (K''\cap \hat{U})(K_n\cap \hat{U}_H)$.

We shall prove $Z_n \subset K'$.
Since $Z_n \neq Z_{n+1}$,
it is enough to check $Z_n\backslash Z_{n+1} \subset K'$.
Let $1+x$ be an element in $E_n^{1}\backslash E_{n+1}^{1}$.
Then there is $z \in \mi_E^{-n}\backslash \mi_E^{-1-n}$
such that
$1+x = -\overline{z}/z$.
Since $z+\overline{z} = -xz\in \ri_F^\times$,
there is an element $y \in \ri_E^\times$ 
such that $z+\overline{z}+y \overline{y} = 0$.
We have
\begin{eqnarray}\label{eq:d}
\mathrm{diag}(\p^n z, -\overline{z}/z, \p^{-n}\overline{z}^{-1} )
= \hat{u}(\overline{y}/z, 1/\overline{z})u(y, z)
\hat{u}(\overline{y}/\overline{z}, 1/\overline{z}) t_n.
\end{eqnarray}
By (\ref{eq:deco}),
all elements in the right-hand side in (\ref{eq:d}) belong to $K'$.
So
we get 
\[
\mathrm{diag}(\p^n z, -\overline{z}/z, \p^{-n}\overline{z}^{-1} ) \in K'.
\]
Since $\p^n {z} \in \ri_E^\times$,
we obtain
$\iota(1+x) = \iota(-\overline{z}/z) = 
t(-\p^{-n} \overline{z}/z^2)
\mathrm{diag}(\p^n z, -\overline{z}/z, \p^{-n}\overline{z}^{-1} ) 
\in K'$.
This completes the proof for $n \geq 1$.

(ii) Suppose that $n = 0$.
By (\ref{eq:decomp}),
we get $K_0 = (B\cap K_0) W (K_0\cap K_1)$.
We have $W = \{1, t_0\} \subset K_0\cap H \subset K'$
and
$K_0 \cap U \subset K'$.
So we get
$K_0 \cap \hat{U} = t_0 (K_0\cap U) t_0 \subset K'$.
Since $B\cap K_0 = (K_0\cap T)(K_0\cap U)$
and
$K_0\cap K_1$ has an Iwahori decomposition,
it suffices to prove $K_0\cap T \subset K'$.
Note that $K_0\cap T = (K_0\cap T_H)Z$
and $K_0 \cap T_H\subset K'$.
So it is enough to prove $Z \subset K'$.

Since $Z =Z_0 \neq Z_1$,
it suffices to prove $Z\backslash Z_{1} \subset K'$.
Let $x$ be an element in $E^{1}\backslash E_{1}^{1}$.
Then there is $z \in \ri_E^\times$
such that
$x = -\overline{z}/z$.
Since $z+\overline{z} = (1-x)z \in \ri_F^\times$,
there is an element $y \in \ri_E^\times$ such that
$z+\overline{z}+y \overline{y} = 0$.
So we can observe $\iota(x) = \iota(-\overline{z}/z) \in K'$
as in the case when $n \geq 1$.
\end{proof}

Let $(\pi, V)$ be an irreducible admissible representation of $G$.
For each non-negative integer $n$, we define a subspace
\begin{eqnarray*}
V(n) = \{v \in V\ |\ \pi(k)v= v,\ k \in K_n \}
\end{eqnarray*}
of $V$.
Since $\pi$ is admissible, $V(n)$ is finite-dimensional for all $n \geq 0$.
%%%%%%%%%%
\begin{defn}
Let $(\pi, V)$ be an irreducible admissible representation of $G$
which has $K_n$-fixed vectors for some $n \geq 0$.
We define {\it the conductor of} $\pi$ by
\begin{eqnarray*}
N_\pi = \mathrm{min}\{n \geq 0\, |\, V(n) \neq \{0\}\}.
\end{eqnarray*}
We call the vectors in $V(N_\pi)$ {\it the newforms} for $\pi$,
and all 
the elements in $V(n)$ {\it the oldforms} for $\pi$,
for $n > N_\pi$.
\end{defn}

%%%%%%%%%%%%%%%%%%%
\begin{rem}
Since $Z_n = Z\cap K_n$,
the central character $\omega_\pi$ of $\pi$ is trivial on $Z_n$
if $V(n) \neq \{0\}$.
This implies
\begin{eqnarray*}
N_\pi \geq n_\pi.
\end{eqnarray*}
The relation between these conductors plays an important role 
in section~\ref{sec:main}.
\end{rem}

The following theorem states that
we can define the conductors at least  for generic representations
of $G$.
%%%%%%%%%%%%%
\begin{thm}\label{thm:gen_new}
If an irreducible admissible representation $(\pi, V)$ of $G$
is generic,
then there exists a non-negative integer $n$ such that $V(n) \neq \{0\}$.
\end{thm}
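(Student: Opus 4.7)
Since $\pi$ is generic, there is a non-zero Whittaker functional $\lambda\colon V\to\C$, and after translating by some $g\in G$ we may choose $v_0\in V$ with $\lambda(v_0)=W_{v_0}(1)\neq 0$; the vector $v_0$ is smooth, hence fixed by some compact open subgroup $L\subset G$. The strategy is to construct a $K_n$-fixed vector, for suitably large $n$, by two successive averagings.

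The first averaging is over $U(\ri_E)$. Since $\psi_E$ has conductor $\ri_E$ and $\psi(u(x,y))=\psi_E(x)$, the restriction $\psi|_{U(\ri_E)}$ is trivial, so
\[
v_1\;=\;\frac{1}{\mu(U(\ri_E))}\int_{U(\ri_E)}\pi(u)v_0\,du
\]
is $U(\ri_E)$-fixed and satisfies $\lambda(v_1)=\lambda(v_0)\neq 0$. By Lemma~\ref{lem:structure1}, $K_n$-fixedness for a $U(\ri_E)$-fixed vector is equivalent to being $K_n\cap H$-fixed. I would then form the second average
\[
v_n\;=\;\int_{K_n}\pi(k)v_1\,dk\;\in\;V(n),
\]
which is $K_n$-fixed by construction, and show $v_n\neq 0$ for some $n$ by exhibiting $g_0\in G$ and $n$ large with $W_{v_n}(g_0)=\int_{K_n}W_{v_1}(g_0 k)\,dk\neq 0$. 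For $n\geq n_\pi$ and large enough that $K_n\cap\hat U\subset L$, the shrinking factors $K_n\cap\hat U$ and $Z_n\subset K_n\cap T$ lie in the stabilizer of $v_1$; moreover, the excess of $K_n\cap U$ over $U(\ri_E)$ lies in the Heisenberg centre of $U$, where $\psi$ is still trivial, so that piece is also handled by the averaging and contributes a non-zero constant factor to $W_{v_n}(g_0)$.

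The main obstacle is the \emph{horizontal} compact torus $T_H(\ri_E)=\{t(a):a\in\ri_E^\times\}\subset K_n\cap T$, which does not shrink with $n$. Overcoming it requires a careful choice of $g_0$---for instance $g_0=t_n$ or a Weyl-type element in $K_n\cap H$---so that $g_0^{-1}T_H(\ri_E)g_0$ is absorbed into the stabilizer of $v_1$, reducing the remaining averaging to an integral over a compact neighborhood of $1$ where smoothness of $W_{v_1}$ forces non-vanishing. The essential use of Lemma~\ref{lem:structure1} is to replace averaging over the abstract $K_n$ by the more transparent averaging over the $\U(1,1)$-subgroup $K_n\cap H$, where the above structural analysis can be carried out.
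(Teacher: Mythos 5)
Your proposal takes a genuinely different route from the paper, but there is a gap that I do not think your sketch closes. The essential difficulty you flag in your last paragraph is exactly the point where the argument needs a non-trivial input, and you do not supply one.

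The paper obtains a non-zero $K_0\cap H$-fixed vector in $V$ by invoking Baruch's Theorem~4.7: for generic $s$, $\mathrm{Hom}_H(V,\mathrm{Ind}_{B_H}^H|\cdot|_E^s)\neq\{0\}$. Choosing $s$ so that the target principal series of $H$ is unramified and irreducible, the resulting $H$-equivariant surjection $L\colon V\to\mathrm{Ind}_{B_H}^H|\cdot|_E^s$ serves as a detector: averaging a preimage of the spherical vector over $K_0\cap H$ produces a vector whose image under $L$ is the non-zero spherical vector, hence the average is non-zero and $K_0\cap H$-fixed. Then conjugation by $\eta^n$ together with Lemma~\ref{lem:structure1} lands it in $V(2n)$. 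The key is that the non-vanishing is tested by an \emph{$H$-equivariant} functional.

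Your approach tries to test non-vanishing of $v_n=\int_{K_n}\pi(k)v_1\,dk$ with the Whittaker functional, which is $\psi$-equivariant under $U$ but entirely uncontrolled under the compact torus $T_H(\ri_E)=\{t(a):a\in\ri_E^\times\}\subset K_n\cap H$. This torus does not shrink as $n\to\infty$, as you note, and averaging over it can annihilate the Whittaker value at every $g_0$: a priori there is no reason that $a\mapsto W_{v_1}(g_0\,t(a))$ should have non-vanishing mean over $\ri_E^\times$. The claim that ``$g_0^{-1}T_H(\ri_E)g_0$ is absorbed into the stabilizer of $v_1$'' for a suitable $g_0$ cannot hold, since conjugation by $g_0\in G$ sends $T_H(\ri_E)$ to a torus of the same size, whereas the stabilizer of $v_1$ meets any conjugate of $T_H$ in a finite-index subgroup that you have no control over. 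What is missing is precisely the input that re-couples the averaging over this torus to something you can compute, and that is the role Baruch's $H$-period plays in the paper. Without it, the assertion that $W_{v_n}(g_0)\neq 0$ for some $g_0$ and large $n$ is equivalent to the theorem you are trying to prove, so the argument is circular at this point.
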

%%%%%%%%%%%%%
\begin{proof}
We regard $|\cdot|_E$ as a quasi-character of $T_H \simeq E^\times$.
It follows from
\cite{Baruch} Theorem 4.7
that
$\dim \mathrm{Hom}_H(V, \mathrm{Ind}_{B_H}^H |\cdot|_E^s) = 1$
, outside a finite number of values  of $q^{2s}$.
So we can choose $s \in \C$ such that 
$\dim \mathrm{Hom}_H(V, \mathrm{Ind}_{B_H}^H |\cdot|_E^s) = 1$
and $\mathrm{Ind}_{B_H}^H |\cdot|_E^s$ is an unramified principal series representation of $H \simeq U(1,1)(E/F)$.
Thus there exists a non-zero $K_0\cap H$-fixed vector $v$
in $V$.
Take a positive integer $n$ so that
$v$ is fixed by
\[
\left(
\begin{array}{ccc}
1 & \mi_E^n &  \mi_E^n\\
 & 1 &\mi_E^n\\
 & & 1
\end{array}
\right)\cap G.
\]
Then it follows from Lemma~\ref{lem:structure1} that 
the vector
\[
\pi 
\left(
\begin{array}{ccc}
\p^{-n} & & \\
& 1 & \\
& & \p^n
\end{array}
\right)v
\]
lies in $V(2n)$. 
\end{proof}

\Section{Level raising operators}
Let $(\pi, V)$ be an irreducible admissible representation of $G$.
In this section, we define 
two level raising operators 
$\eta: V(n) \rightarrow V(n+2)$ and
$\theta': V(n) \rightarrow V(n+1)$ as in \cite{RS} subsection 3.2,
and study their several properties.

We set 
\begin{eqnarray*}
\eta = 
\left(
\begin{array}{ccc}
\p^{-1} & & \\
& 1 & \\
& & \p
\end{array}
\right).
\end{eqnarray*}
For $n \geq 0$,
we have $K_{n+2} \subset \eta K_n\eta^{-1}$.
So we can define an operator $\eta: V(n) \rightarrow V(n+2)$
by 
\begin{eqnarray*}
\eta v = \pi(\eta) v,\ v \in V(n).
\end{eqnarray*}
We define an open compact subgroup 
$U(\mi_E^{-1})$ of $U$ by
\begin{eqnarray*}
U(\mi_E^{-1})
=
\left(
\begin{array}{ccc}
1 & \mi_E^{-1}& \mi_E^{-2}\\
0 & 1 & \mi_E^{-1}\\
0 & 0 & 1
\end{array}
\right)\cap G.
\end{eqnarray*}
%%%%%%%%%%%%
\begin{lem}\label{lem:structure2}
Let $n$ be a non-negative integer
and
$v$ an element in $V(n+2)$.
Then $v \in \eta V(n)$ if and only if 
$v$ is fixed by $U(\mi_E^{-1})$.
\end{lem}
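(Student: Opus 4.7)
The plan is to analyze how conjugation by $\eta = \mathrm{diag}(\p^{-1},1,\p)$ acts on the two generating subgroups of $K_n$ identified in Lemma~\ref{lem:structure1}. A direct matrix computation (conjugation by $\eta$ multiplies the $(i,j)$-entry by $\p^{a_i - a_j}$ with $(a_1,a_2,a_3) = (-1,0,1)$) yields the two identities
\[
\eta\, U(\ri_E)\, \eta^{-1} = U(\mi_E^{-1}), \qquad \eta\, (K_n \cap H)\, \eta^{-1} = K_{n+2} \cap H,
\]
the second because the off-diagonal entries in $H$ are only at positions $(1,3)$ and $(3,1)$, which are scaled by $\p^{-2}$ and $\p^2$ respectively.

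For the ``only if'' direction, suppose $v = \pi(\eta) w$ with $w \in V(n)$. For any $u \in U(\mi_E^{-1})$ the first identity gives $\eta^{-1} u \eta \in U(\ri_E) \subset K_n$, hence
\[
\pi(u) v = \pi(\eta)\, \pi(\eta^{-1} u \eta)\, w = \pi(\eta) w = v,
\]
so $v$ is fixed by $U(\mi_E^{-1})$ as desired.

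For the converse, assume $v \in V(n+2)$ is fixed by $U(\mi_E^{-1})$. I would aim to show that $\pi(\eta^{-1}) v \in V(n)$, equivalently, that $v$ is fixed by $\eta K_n \eta^{-1}$. By Lemma~\ref{lem:structure1}, $K_n$ is generated by $K_n \cap H$ and $U(\ri_E)$, so after conjugation $\eta K_n \eta^{-1}$ is generated by $K_{n+2} \cap H$ and $U(\mi_E^{-1})$. The first of these fixes $v$ because $K_{n+2} \cap H \subset K_{n+2}$ and $v \in V(n+2)$; the second fixes $v$ by hypothesis. Therefore $v$ is fixed by $\eta K_n \eta^{-1}$, so $\pi(\eta^{-1}) v \in V(n)$ and $v \in \eta V(n)$.

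The main obstacle is essentially just verifying the two conjugation identities, which are routine; all the conceptual content is packaged in Lemma~\ref{lem:structure1}, which reduces invariance under the non-standard compact subgroup $\eta K_n \eta^{-1}$ to invariance under the two explicit subgroups $K_{n+2} \cap H$ and $U(\mi_E^{-1})$.
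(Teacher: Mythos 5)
Your proof is correct and follows exactly the same route as the paper: verify $\eta U(\ri_E)\eta^{-1} = U(\mi_E^{-1})$ and $\eta (K_n\cap H)\eta^{-1} = K_{n+2}\cap H$, then invoke Lemma~\ref{lem:structure1} to see that $\eta K_n\eta^{-1}$ is generated by $K_{n+2}\cap H$ and $U(\mi_E^{-1})$, from which both directions are immediate. The paper just states this more tersely, leaving the routine unpacking (which you spell out) to the reader.
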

%%%%%%%%%%%%
\begin{proof}
Observe that
$\eta (K_n\cap H)\eta^{-1}= K_{n+2}\cap H$ 
and $
\eta U(\ri_E)\eta^{-1} = U(\mi_E^{-1})$.
Lemma~\ref{lem:structure1} says
that the group $\eta K_n\eta^{-1}$
is generated by $K_{n+2}\cap H$ and $
U(\mi_E^{-1})$.
The assertion follows immediately from this.
\end{proof}

We define another level raising operator $\theta': V(n) \rightarrow V(n+1)$
by
\begin{eqnarray*}
\theta' v 
= \frac{1}{\mathrm{vol}(K_{n+1}\cap K_n)}
\int_{K_{n+1}} \pi(k) v dk,\ v \in V(n).
\end{eqnarray*}
To describe $\theta'$ explicitly,
we prepare the following:
%%%%%%%%%%%%%
\begin{lem}\label{lem:coset}
Let $n$ be a non-negative integer.
Then a complete system of representatives for $K_{n+1}/K_{n+1}\cap K_n$
is given by $q+1$ elements
$t_{n+1}$
{and}
$u(x\e)$,
$x \in \mi_F^{-1-n}/\mi_F^{-n}$.
\end{lem}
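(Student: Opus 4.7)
The plan is to verify three things: (a) each of the $q+1$ listed elements lies in $K_{n+1}$, (b) they represent pairwise distinct left cosets of $K_{n+1}\cap K_n$, and (c) every element of $K_{n+1}$ lies in one of these cosets. Membership (a) is immediate from direct inspection of entries: for $x\in\mi_F^{-1-n}$ one has $x\e\in\mi_F^{-1-n}\e\subset\mi_E^{-n-1}$, so $u(x\e)\in K_{n+1}$, while $t_{n+1}\in K_{n+1}$ is already noted in the paper.

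The key structural observation for (b) is that $K_{n+1}\cap K_n$ is obtained from $K_{n+1}$ by tightening only the $(1,3)$-entry, from $\mi_E^{-n-1}$ to $\mi_E^{-n}$, because the constraints on all other entries of $K_{n+1}$ already refine those of $K_n$. Consequently $u(-x\e)u(x'\e) = u((x'-x)\e)$ lies in $K_{n+1}\cap K_n$ iff $x\equiv x'\pmod{\mi_F^{-n}}$, separating the $u(x\e)$-cosets; a direct multiplication gives the $(1,3)$-entry of $u(-x\e)t_{n+1}$ as $\p^{-n-1}$, which lies outside $\mi_E^{-n}$, separating $t_{n+1}$ from the $u(x\e)$-cosets.

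For (c) I would take an arbitrary $g=(g_{ij})\in K_{n+1}$ and split on whether $g_{33}$ is a unit. If $g_{33}\in\mi_E$, I claim $t_{n+1}g\in K_{n+1}\cap K_n$: each entry of $t_{n+1}g$ is either an entry of $g$ or a $\p^{\pm(n+1)}$-rescaling of one, and all nine easily land in the prescribed sets using $g\in K_{n+1}$; the critical $(1,3)$-entry equals $\p^{-n-1}g_{33}\in\mi_E^{-n}$ precisely because $g_{33}\in\mi_E$. If instead $g_{33}\in\ri_E^\times$, I seek $x\in\mi_F^{-1-n}$ with $u(-x\e)g\in K_{n+1}\cap K_n$; only row $1$ of $g$ changes under left multiplication by $u(-x\e)$, so this reduces to requiring $g_{13}-x\e g_{33}\in\mi_E^{-n}$.

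The principal obstacle is producing such an $x$ in $F$ rather than merely in $E$. For this I invoke the $(3,3)$-component of the unitarity relation ${}^t\overline{g}Jg=J$, which reads $\mathrm{Tr}_{E/F}(\overline{g_{13}}g_{33}) = -g_{23}\overline{g_{23}}\in\ri_F$. Writing $\overline{g_{13}}g_{33} = a+b\e$ with $a,b\in F$, the trace condition together with $\overline{g_{13}}g_{33}\in\mi_E^{-n-1}$ forces $a\in\ri_F$ and $b\in\mi_F^{-n-1}$ (using that $2$ is a unit since $p$ is odd and that $E/F$ is unramified). Dividing by $|g_{33}|^2=g_{33}\overline{g_{33}}\in\ri_F^\times$ shows $g_{13}g_{33}^{-1}\equiv -b|g_{33}|^{-2}\e\pmod{\mi_E^{-n}}$, so $x=-b|g_{33}|^{-2}\in\mi_F^{-1-n}$ is the required element, and combining the two cases gives the claimed decomposition.
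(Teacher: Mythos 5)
Your proof is correct and follows essentially the same route as the paper's: compute $K_{n+1}\cap K_n$ explicitly (noting only the $(1,3)$-entry tightens), split on whether $g_{33}$ is a unit, handle the non-unit case by left-multiplying by $t_{n+1}$, and in the unit case use the $(3,3)$-component of the unitarity relation to show that $g_{13}g_{33}^{-1}$ has $F$-part in $\ri_F$, so it can be cleared by some $u(x\e)$ with $x\in F$. The paper works with $g_{13}\overline{g_{33}}$ where you work with its conjugate $\overline{g_{13}}g_{33}$, but this is an immaterial difference.
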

%%%%%%%%%%%%%%
\begin{proof}
We have
\[
K_{n+1}\cap K_n
=
\left(
\begin{array}{ccc}
\ri_E & \ri_E &\mi^{-n}_E\\
\mi^{n+1}_E & 1+\mi^{n+1}_E & \ri_E\\
\mi^{n+1}_E & \mi^{n+1}_E & \ri_E
\end{array}
\right)\cap G.
\]
It is easy to observe that 
the elements in the assertion belong to pairwise distinct cosets
in $K_{n+1}/K_{n+1}\cap K_n$.
We write the $(i,j)$-entry of $k \in K_{n+1}$
as $k_{ij}$.
Suppose that $k_{33} \in \mi_E$.
Then we have $t_{n+1}k \in K_{n+1}\cap K_n$,
and hence $k \in t_{n+1}(K_{n+1}\cap K_n)$.
If $k_{33} \in \ri_E^\times$,
then
we have
$k_{13}\overline{k}_{33}+
k_{23}\overline{k}_{23}
+\overline{k}_{13}{k}_{33} = 0$ because $k$ lies in $G$.
This implies $k_{13}\overline{k}_{33} \in \ri_F \oplus \mi_F^{-1-n}\e$.
Since $k_{33}\overline{k}_{33} \in \ri_F^\times$,
we get
$k_{13}{k}^{-1}_{33} \in \ri_F \oplus \mi_F^{-1-n}\e$.
So there exists $x \in \mi_F^{-1-n}$ such that
$k_{13}-x\e k_{33} \in \ri_E$.
We therefore have
$
k \in
u(x\e)
(K_{n+1}\cap K_n)$.
\end{proof}

%%%%%%%%%%%%%%%
\begin{prop}\label{prop:theta}
Let $n$ be a non-negative integer.
Then we have
\[
\theta' v
= \eta v +
\sum_{x \in \mi_F^{-1-n}/\mi_F^{-n}}
\pi\left(
\begin{array}{ccc}
1 & & x\e\\
 & 1 & \\
  &  & 1
\end{array}
\right)v,\ v \in V(n).
\]
\end{prop}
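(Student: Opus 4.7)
The plan is to reduce the averaging integral defining $\theta'$ to a finite sum over the coset representatives supplied by Lemma~\ref{lem:coset}, and then to recognise the contribution of the representative $t_{n+1}$ as $\eta v$.

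First, since $v\in V(n)$ is fixed by $K_n$, it is in particular fixed by the smaller subgroup $K_{n+1}\cap K_n$. Hence the integrand $\pi(g)v$ is constant on each left coset of $K_{n+1}\cap K_n$ in $K_{n+1}$, and the normalisation by $1/\mathrm{vol}(K_{n+1}\cap K_n)$ collapses the integral to the finite sum
\begin{equation*}
\theta' v \;=\; \sum_{k} \pi(k)v,
\end{equation*}
where $k$ runs through any complete system of representatives for $K_{n+1}/(K_{n+1}\cap K_n)$. By Lemma~\ref{lem:coset}, one such system consists of $t_{n+1}$ together with the $q$ elements $u(x\e)$ for $x\in \mi_F^{-1-n}/\mi_F^{-n}$. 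Each $u(x\e)$ is precisely the unipotent matrix appearing on the right-hand side of the proposition.

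It then remains to identify $\pi(t_{n+1})v$ with $\eta v$. A direct matrix computation gives $\eta^{-1}t_{n+1} = \mathrm{diag}(\p,1,\p^{-1})\,t_{n+1} = t_n$, i.e.\ $t_{n+1}=\eta\,t_n$; since $t_n\in K_n$ and $v$ is $K_n$-fixed, we obtain $\pi(t_{n+1})v = \pi(\eta)\pi(t_n)v = \pi(\eta)v = \eta v$. Substituting this into the finite-sum expression above yields the stated formula.

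There is no genuinely hard step here, as the combinatorial work has already been done in Lemma~\ref{lem:coset}. The only points worth double-checking are that the normalising factor $1/\mathrm{vol}(K_{n+1}\cap K_n)$ is correctly matched to the Haar measure so that each coset contributes exactly one copy of $\pi(k)v$, and the brief matrix identity $t_{n+1}=\eta\,t_n$ used to convert the $t_{n+1}$ term into $\eta v$.
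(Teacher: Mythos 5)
Your proof is correct and takes the same route as the paper: the integral defining $\theta'$ collapses to a sum over the coset representatives of $K_{n+1}/(K_{n+1}\cap K_n)$ given by Lemma~\ref{lem:coset}, and the representative $t_{n+1}$ contributes $\eta v$ via the identity $t_{n+1}=\eta\,t_n$ together with $t_n\in K_n$. The paper's proof is just a one-line statement of exactly these two ingredients, so you have simply supplied the routine details.
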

%%%%%%%%%%%%%%%%%%
\begin{proof}
The proposition follows from 
Lemma~\ref{lem:coset} and
the equation $t_{n+1} = \eta t_n$.
\end{proof}

By Proposition~\ref{prop:theta},
the operators $\theta'$ and $\eta$ commute
each other.
%%%%%%%%%%%%%%%%%%
\begin{cor}
Let $n$ be a non-negative integer.
We have $\eta \theta' v = \theta' \eta v$,
for all $v \in V(n)$.
\end{cor}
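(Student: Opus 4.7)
The plan is to prove the corollary by a direct calculation, applying Proposition~\ref{prop:theta} in both orders and then matching the resulting expressions term-by-term via a change of variables. Since the two operators map
$\theta'\eta : V(n) \to V(n+2) \to V(n+3)$
and
$\eta\theta' : V(n) \to V(n+1) \to V(n+3)$,
both sides live in $V(n+3)$, so it suffices to show the equality as vectors in $V$.

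First I would expand $\eta\theta' v$: using Proposition~\ref{prop:theta} on $V(n)$ and then applying $\pi(\eta)$,
\[
\eta \theta' v \;=\; \pi(\eta)^2 v \;+\; \sum_{x \in \mi_F^{-1-n}/\mi_F^{-n}} \pi(\eta)\pi\!\left(
\begin{array}{ccc}
1 & 0 & x\e \\
0 & 1 & 0 \\
0 & 0 & 1
\end{array}\right) v.
\]
Each summand equals $\pi(\eta \cdot u(0,x\e) \cdot \eta^{-1})\pi(\eta)v$. Next I would expand $\theta' \eta v$ by applying Proposition~\ref{prop:theta} directly to the vector $\pi(\eta)v \in V(n+2)$:
\[
\theta' \eta v \;=\; \pi(\eta)^2 v \;+\; \sum_{x' \in \mi_F^{-3-n}/\mi_F^{-2-n}} \pi\!\left(
\begin{array}{ccc}
1 & 0 & x'\e \\
0 & 1 & 0 \\
0 & 0 & 1
\end{array}\right)\pi(\eta) v.
\]
The leading terms $\pi(\eta)^2 v$ agree immediately.

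The remaining step is a routine conjugation computation: for any $y\in E$ with $y+\overline{y}=0$,
\[
\eta\,u(0,y)\,\eta^{-1} \;=\; u(0,\p^{-2}y),
\]
as is verified by direct matrix multiplication. Hence setting $x' = \p^{-2}x$ gives a bijection between $\mi_F^{-1-n}/\mi_F^{-n}$ and $\mi_F^{-3-n}/\mi_F^{-2-n}$ which identifies the two sums. Assembling everything yields $\eta\theta' v = \theta'\eta v$.

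There is no real obstacle here; the only care is to ensure that the coset representatives $x\e$ lie in the unipotent $U$ (which holds since $\overline{x\e} = -x\e$ for $x\in F$, so the condition $y+\overline{y}+x\overline{x}=0$ is satisfied with $x$-coordinate zero) and that the change of variables $x \mapsto \p^{-2}x$ matches the indexing sets in Proposition~\ref{prop:theta} at levels $n$ and $n+2$.
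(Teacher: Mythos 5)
Your proof is correct and is exactly the argument the paper intends: the paper offers no separate proof, simply noting that commutativity "follows from Proposition~\ref{prop:theta}," and the verification the author has in mind is precisely your conjugation $\eta\,u(0,y)\,\eta^{-1} = u(0,\p^{-2}y)$ together with the change of summation variable $x \mapsto \p^{-2}x$, which re-indexes $\mi_F^{-1-n}/\mi_F^{-n}$ to $\mi_F^{-3-n}/\mi_F^{-2-n}$, the index set for $\theta'$ at level $n+2$.
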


We prepare three Lemmas~\ref{lem:325},
\ref{lem:eta} and \ref{lem:u},
whose proofs are similar to those of
Theorems 3.2.5, 3.2.6 and Lemma 3.4.1 in \cite{RS}
respectively.

%%%%%%%%%%%%%%%%%%
\begin{lem}\label{lem:325}
Let $n$ be a positive integer.
Let $v$ be an element in $V$ such that
\[
\sum_{x \in \mi_F^{-1-n}/\mi_F^{-n}}
\pi\left(
\begin{array}{ccc}
1 & & x\e\\
& 1 & \\
& & 1
\end{array}
\right)v = 0.
\]
Suppose that
$v$ is fixed by the following subgroups of $G$:
\begin{eqnarray*}
(i)
\left(
\begin{array}{ccc}
1 & & \mi_E^{-n}\\
& 1 & \\
& & 1
\end{array}
\right)\cap G,\
(ii)
\left(
\begin{array}{ccc}
\ri_E^\times & & \\
& 1 & \\
& & \ri_E^\times
\end{array}
\right)\cap G,\
(iii)
\left(
\begin{array}{ccc}
1 & & \\
\mi_E^n & 1 & \\
\mi_E^{n+1} & \mi_E^n & 1
\end{array}
\right)\cap G.
\end{eqnarray*}
Then $v$ is fixed by $t_{n+1}$ and $U(\mi_E^{-1})$.
\end{lem}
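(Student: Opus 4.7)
The plan is to prove the two conclusions separately: first $\pi(t_{n+1})v = v$, then the $U(\mi_E^{-1})$-invariance of $v$. Write $v_x := \pi(u(0, x\e))v$, so the sum hypothesis reads $\sum_{x \in \mi_F^{-1-n}/\mi_F^{-n}} v_x = 0$. The key tool is the $\U(1,1)$-analog of the $SL_2$ Bruhat identity in the $H$-subgroup,
\[
\hat u(0, -y^{-1})\, u(0, y)\, \hat u(0, -y^{-1}) = t(y) t_0, \qquad y \in E^-\setminus\{0\}.
\]
For $y = x\e$ with $v_F(x) = -n-1$ one has $-y^{-1} = -\e/(x\epsilon) \in \mi_E^{n+1} \cap E^-$, so $\hat u(0, -y^{-1}) \in$ (iii) fixes $v$. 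Applying $\pi$ to the identity, together with the group identity $\hat u(0, y^{-1})\, t(y) t_0 = t(y) t_0\, u(0, -y)$ (itself a consequence of $t_0\, u(0, -y)\, t_0 = \hat u(0, -y)$ and $t(y)\, \hat u(0, -y)\, t(y)^{-1} = \hat u(0, y^{-1})$ for $y \in E^-$), I would derive the pivotal identity
\[
v_x = \pi(t(x\e) t_0)\, v_{-x}, \qquad x \in \p^{-n-1}\ri_F^\times.
\]

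I would then sum this identity over the non-zero cosets of $\mi_F^{-1-n}/\mi_F^{-n}$. After substituting $x \to -x$ on the right-hand side and using that $t(-1) \in$ (ii) commutes both with $t(x\e) t_0$ (as $t_0\, t(-1)\, t_0 = t(-1)$) and with every $\pi(u(0, z))$ (so $\pi(t(-1))\, v_x = v_x$), the sum becomes $\sum_{x \ne 0} v_x = \sum_{x \ne 0} \pi(t(x\e) t_0)\, v_x$. Parametrising $x = u\p^{-n-1}$ with $u \in \rf^\times$ and factoring $t(u\p^{-n-1}\e) t_0 = t(u\e)\, t_{n+1}$, I pull $t_{n+1}$ outside via the conjugation $t_{n+1}^{-1}\, t(u\e)\, t_{n+1} = t(-\e/(u\epsilon))$. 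Since $-\e/(u\epsilon) \in \ri_E^\times$, this lies in (ii), and the formula $\pi(t(b))\, v_u = v_{b\bar{b} u}$ (writing $v_u := v_{u\p^{-n-1}}$) gives $\pi(t(-\e/(u\epsilon)))\, v_u = v_{-1/(u\epsilon)}$. The map $u \mapsto -1/(u\epsilon)$ is an involutive bijection of $\rf^\times$, so after reindexing the right-hand side equals $\pi(t_{n+1}) \sum_{u'} v_{u'} = -\pi(t_{n+1})\, v$ by the sum condition, while the left-hand side equals $-v$ for the same reason. This forces $\pi(t_{n+1})\, v = v$.

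For the $U(\mi_E^{-1})$-invariance, a direct matrix computation yields
\[
t_{n+1}\, \hat u(x, y)\, t_{n+1}^{-1} = u(-\bar x\, \p^{-n-1},\, \p^{-2n
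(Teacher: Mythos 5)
Your argument is correct and is essentially the same as the paper's: both rest on the $\mathrm{U}(1,1)$ Bruhat identity in $H$ (you package it as the relation $v_x = \pi(t(x\e)t_0)v_{-x}$, the paper as an explicit factorization of $t_{n+1}u(0,x\e)$ into an upper unipotent times a matrix fixed by (ii) and (iii)), followed by re-indexing the non-zero cosets via the involution $u \mapsto -1/(u\epsilon)$ of $k_F^\times$ and invoking the vanishing of the sum. The truncated second half is headed exactly where the paper goes, namely $t_{n+1}\hat u(x,y)t_{n+1} = u(-\p^{-n-1}\bar x,\,\p^{-2n-2}y)$, so that $U(\mi_E^{-1})$ is the $t_{n+1}$-conjugate of a subgroup of (iii) once $n\geq 1$.
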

%%%%%%%%%%%%%%%%%%
\begin{proof}
Since $v$ is fixed by the subgroup (i),
the sum
\[
\sum_{x \in \mi_F^{-1-n}/\mi_F^{-n}}
\pi\left(
\begin{array}{ccc}
1 & & x\e\\
& 1 & \\
& & 1
\end{array}
\right)v
\]
is well-defined.

We claim that $v$ is fixed by $t_{n+1}$.
By assumption, we have
\[
-v = 
\sum_{
\substack{x \in \mi_F^{-1-n}/\mi_F^{-n}\\x\not\equiv 0}}
\pi\left(
\begin{array}{ccc}
1 & & x\e\\
& 1 & \\
& & 1
\end{array}
\right)v.
\]
Because $v$ is fixed by the subgroup (ii) and (iii),
we obtain
\begin{eqnarray*}
& & -\pi(t_{n+1})v  = 
\sum_{
\substack{x \in \mi_F^{-1-n}/\mi_F^{-n}\\x\not\equiv 0}}
\pi\left(t_{n+1}\left(
\begin{array}{ccc}
1 & & x\e\\
& 1 & \\
& & 1
\end{array}
\right)\right)v\\
& = & 
\sum_{
\substack{x \in \mi_F^{-1-n}/\mi_F^{-n}\\x\not\equiv 0}}
\pi\left(\left(
\begin{array}{ccc}
1 & & \p^{-2-2n}x^{-1}\e^{-1}\\
& 1 & \\
& & 1
\end{array}
\right)
\left(
\begin{array}{ccc}
-\p^{-1-n}x^{-1}\e^{-1} & & \\
& 1 & \\
\p^{n+1}& & \p^{n+1}x\e
\end{array}
\right)\right)v\\
& = & 
\sum_{
\substack{x \in \mi_F^{-1-n}/\mi_F^{-n}\\x\not\equiv 0}}
\pi\left(
\begin{array}{ccc}
1 & & \p^{-2-2n}x^{-1}\e^{-1}\\
& 1 & \\
& & 1
\end{array}
\right)v\\
& = & -v.
\end{eqnarray*}
Therefore, $v$ is fixed by $t_{n+1}$.

Since
\[
U(\mi_E^{-1})
= 
t_{n+1}
\left(\left(
\begin{array}{ccc}
1 & & \\
\mi_E^n & 1 & \\
\mi_E^{2n} & \mi_E^n & 1
\end{array}
\right)\cap G\right)
t_{n+1}
\]
and $v$ is fixed by the subgroup (iii),
we see that $v$ is fixed by $U(\mi_E^{-1})$.
\end{proof}

We introduce one more operator $S$ on $V$;
\begin{eqnarray*}
Sv 
= 
\frac{1}{\mathrm{vol}(U(\ri_E))}
\int_{U(\mi_E^{-1})}\pi(u)v du,\ v \in V.
\end{eqnarray*}
Let $n$ be a  non-negative integer and
let $v \in V(n)$.
One can observe that
$v$ is fixed by $U(\mi_E^{-1})$
if and only if $Sv = q^4 v$.
If $n \geq 2$,
it follows from Lemma~\ref{lem:structure2}
that $v \in \eta V(n-2)$
if and only if $Sv = q^4 v$.

%%%%%%%%%%%%%%%%%%
\begin{lem}\label{lem:eta}
Let $n$ be an integer such that $n \geq 2$
and $n > n_\pi$.
Suppose that an element $v$ in $V(n)$ satisfies
$\theta'v \in \eta V(n-1)$.
Then $v \in \eta V(n-2)$.
\end{lem}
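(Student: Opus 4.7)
The plan is to reduce to proving that $v$ is fixed by $U(\mi_E^{-1})$; by Lemma~\ref{lem:structure2} this is equivalent to $v \in \eta V(n-2)$. I will establish this by studying the auxiliary vector $v^\ast := Sv - q^4 v$ and showing $v^\ast = 0$. From the hypothesis $\theta' v \in \eta V(n-1)$ and Lemma~\ref{lem:structure2}, $\theta' v$ is fixed by $U(\mi_E^{-1})$. Since $\eta v \in \eta V(n)$ is automatically fixed by $U(\mi_E^{-1}) = \eta U(\ri_E)\eta^{-1}$, Proposition~\ref{prop:theta} forces the lower part $\Sigma v := \sum_{x \in \mi_F^{-1-n}/\mi_F^{-n}} \pi(u(0, x\e)) v$ to be fixed by $U(\mi_E^{-1})$ as well. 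The elements $u(0, x\e)$ lie in the center of $U$, so $\Sigma$ commutes with every $\pi(u)$ for $u \in U(\mi_E^{-1})$, and in particular with $S$; this gives $\Sigma v^\ast = S(\Sigma v) - q^4 \Sigma v = 0$. A short computation using the near-idempotence $S^2 = q^4 S$ also yields $S v^\ast = 0$.

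Next, I apply Lemma~\ref{lem:325} to $v^\ast$. Its hypothesis on the vanishing of $\sum_x \pi(u(0,x\e))v^\ast$ has just been verified, and it remains to check that $v^\ast$ is fixed by the three subgroups (i), (ii), (iii) described there. Fixation by (i) is immediate since $u(0, c\e)$ with $c \in \mi_F^{-n}$ lies in $K_n$ and is central in $U$, hence commutes with $S$. Fixation by (ii) follows because each $t(a')$ with $a' \in \ri_E^\times$ lies in $K_n$ (so fixes $v$) and normalizes $U(\mi_E^{-1})$ as a measure-preserving automorphism, giving $\pi(t(a'))Sv = Sv$. Once all three conditions are verified, Lemma~\ref{lem:325} yields that $v^\ast$ is fixed by $U(\mi_E^{-1})$, so $Sv^\ast = q^4 v^\ast$; combining with $Sv^\ast = 0$ forces $v^\ast = 0$, hence $Sv = q^4 v$, and the proof concludes through Lemma~\ref{lem:structure2}.

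The hard part is therefore the verification of condition (iii), namely $\pi(\hat u) Sv = Sv$ for $\hat u = \hat u(x_0, y_0)$ with $x_0 \in \mi_E^n$ and $y_0 \in \mi_E^{n+1}$. The plan is to write each product $\hat u \cdot u$ (for $u \in U(\mi_E^{-1})$) in the Iwasawa form $\hat u u = u'(u)\, t(c'(u))\, \hat u'(u)$; explicit computation using $a \in \mi_E^{-1}$, $b \in \mi_E^{-2}$ and $n \geq 2$ shows that $t(c'(u)) \in T \cap K_n$, and that $\hat u'(u)$ again belongs to subgroup (iii), so both factors fix $v$ and $\pi(\hat u u) v = \pi(u'(u))v$. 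One then checks that $u \mapsto u'(u)$ descends to a well-defined bijection of the coset space $U(\mi_E^{-1})/U(\ri_E)$, which permits a change of variable in the integral defining $Sv$ and yields $\pi(\hat u) Sv = Sv$. The hypothesis $n > n_\pi$ becomes essential precisely here: when the Iwasawa decomposition produces a central component lying in $Z_n$, such an element acts trivially on $V$ because $\omega_\pi|_{Z_n} = 1$. Carrying out this decomposition cleanly and verifying the measure-preservation is the principal technical obstacle of the proof.
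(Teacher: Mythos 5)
Your overall skeleton coincides with the paper's proof: you form $(S-q^4)v$, observe $\eta v$ is $U(\mi_E^{-1})$-fixed so Proposition~\ref{prop:theta} pushes the hypothesis onto the integral $\Sigma v = \sum_{x\in\mi_F^{-1-n}/\mi_F^{-n}}\pi(u(0,x\e))v$, use centrality of $U_H$ in $U$ to commute $\Sigma$ with $S$ and get $\Sigma(S-q^4)v = 0$, and feed $(S-q^4)v$ into Lemma~\ref{lem:325}. Your verifications of conditions (i) and (ii) agree with the paper, and your extra observation $S(S-q^4)v=0$ (from $S^2 = q^4 S$) is a legitimate and equivalent way of finishing once Lemma~\ref{lem:325} outputs that $(S-q^4)v$ is $U(\mi_E^{-1})$-fixed.

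The divergence, and the gap, is condition (iii). You propose to show $\pi(\hat u)Sv = Sv$ for $\hat u$ in the lower congruence subgroup by an explicit Iwasawa decomposition of $\hat u\,u$ for $u\in U(\mi_E^{-1})$ together with a claimed bijection $u\mapsto u'(u)$ on $U(\mi_E^{-1})/U(\ri_E)$. You flag this yourself as ``the principal technical obstacle,'' and indeed you leave it unproven: neither the claim that the torus and lower-unipotent Iwasawa factors land in subgroups fixing $v$, nor the bijectivity on cosets, is established. These can be verified with some care (for example the $(3,3)$-entry $1+y_0y+\overline{x_0 x}$ of $\hat u\,u$ lands in $1+\mi_E^{n-1}$, and the torus factor ends up in $Z_{n-1}(T_H\cap K_n)$, which is exactly where $n>n_\pi$ enters), but as written this is the one step your proof does not actually carry out, and it is nontrivial. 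The paper sidesteps the whole decomposition: conjugating by $\eta$ the standard fact that $U(\ri_E)$ normalizes $(1+M_3(\mi_E^{n-1}))\cap G$ shows that $U(\mi_E^{-1})$ normalizes $K' := \eta\bigl(1+M_3(\mi_E^{n-1})\bigr)\eta^{-1}\cap G$. Subgroup (iii) lies in $K'$, and $K'\subset Z_{n-1}K_n$, so since $n-1\geq n_\pi$ the vector $v$ is fixed by all of $K'$. Then for any $k$ in subgroup (iii), $\pi(k)Sv = \int_{U(\mi_E^{-1})}\pi(u)\pi(u^{-1}ku)v\,du = Sv$ because $u^{-1}ku\in K'$ — no Iwasawa factorization and no change-of-variables bookkeeping. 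You should replace your sketch of (iii) with this argument; the rest of your proposal is sound.
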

%%%%%%%%%%%%%%%%%%
\begin{proof}
By assumption
and Proposition~\ref{prop:theta},
we have
\begin{eqnarray*}
0 & = & (S-q^4)
\theta' v
= (S-q^4)(\eta v +
\sum_{x \in \mi_F^{-1-n}/\mi_F^{-n}}
\pi\left(
\begin{array}{ccc}
1 & & x\e\\
 & 1 & \\
  &  & 1
\end{array}
\right)v)\\
& =& 
(S-q^4)
\sum_{x \in \mi_F^{-1-n}/\mi_F^{-n}}
\pi\left(
\begin{array}{ccc}
1 & & x\e\\
 & 1 & \\
  &  & 1
\end{array}
\right)v\\
& =& 
\sum_{x \in \mi_F^{-1-n}/\mi_F^{-n}}
\pi\left(
\begin{array}{ccc}
1 & & x\e\\
 & 1 & \\
  &  & 1
\end{array}
\right)(S-q^4)v
\end{eqnarray*}
because $\eta v$ is fixed by $U(\mi_E^{-1})$
and $U_H$ is the center of $U$.

We claim that
$(S-q^4)v$ is fixed by the subgroups (i)-(iii) in Lemma~\ref{lem:325}.
Then  Lemma~\ref{lem:325}
says that $(S-q^4)v$ is fixed by $U(\mi_E^{-1})$.
Since $Sv$ is fixed by $U(\mi_E^{-1})$,
we see that $v$ is fixed by $U(\mi_E^{-1})$.
Therefore
we get $v \in \eta V(n-2)$
by Lemma~\ref{lem:structure2}.

We shall prove the claim.
Since $v$ is fixed by the subgroups
(i)-(iii) in Lemma~\ref{lem:325},
it is enough to check that 
$Sv$ is fixed by them.
It is obvious that 
$Sv$ is fixed by the subgroups (i) and (ii).
We shall show that
$Sv$ is fixed by the subgroup (iii).
Since $U(\ri_E)$ normalizes
$(1+M_3(\mi_E^{n-1}))\cap G$,
the group
$U(\mi_E^{-1}) = \eta U(\ri_E) \eta^{-1}$
normalizes
\[
K' = 
\eta (1+M_3(\mi_E^{n-1}))\eta^{-1}\cap G
=
\left(
\begin{array}{ccc}
1+\mi_E^{n-1} & \mi_E^{n-2} & \mi_E^{n-3}\\
\mi_E^n & 1+\mi_E^{n-1} & \mi_E^{n-2} \\
\mi_E^{n+1} & \mi_E^n & 1+\mi_E^{n-1} 
\end{array}
\right)\cap G.
\]
Since the subgroup (iii) lies in $K'$,
it is enough to prove that 
$v$ is fixed by $K'$.
Note that $K' \subset Z_{n-1}K_n$.
Since we are assuming $n-1 \geq n_\pi$,
the group
$Z_{n-1}$ acts on $V$ trivially.
We  therefore conclude that 
$v$ is fixed by $K'$.
This completes the proof.
\end{proof}

%%%%%%%%%%%%%%%%%%%%
\begin{lem}\label{lem:u}
Let $n$ and $k$ be non-negative integers.
For $v \in V(n)$,
there exist $v_1 \in V(n+k-2)$ and $v_2 \in V(n+k-1)$
such that
\[
\int_{\mi_F^{-k-n}/\mi_F^{-n}}
\pi\left(
\begin{array}{ccc}
1 & & x\e\\
& 1 & \\
& & 1
\end{array}
\right)vdx = \theta^{'k} v + \eta v_1 + \eta v_2.
\]
Here, we put $v_1 = 0$ if $n+k-2< 0$
and $v_2 = 0$ if $n+k-1< 0$.
\end{lem}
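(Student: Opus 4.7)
The plan is to induct on $k$, denoting the left-hand side by $\Sigma_k(v)$. The base case $k=0$ is trivial since the quotient $\mi_F^{-n}/\mi_F^{-n}$ is a point, so $\Sigma_0(v) = v = \theta'^0 v$ with $v_1 = v_2 = 0$; this is consistent with the convention that $v_i$ is zero when its required level is negative.

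For the induction step, the crucial observation is that the upper-unipotents with $(1,2)$-entry zero form an abelian subgroup: writing $u(x\e)$ for the matrix with $(1,3)$-entry $x\e$ and ones on the diagonal, one has $u(x\e)u(y\e) = u((x+y)\e)$. Choosing representatives for the short exact sequence
\[
0 \to \mi_F^{-n-k}/\mi_F^{-n} \to \mi_F^{-1-n-k}/\mi_F^{-n} \to \mi_F^{-1-n-k}/\mi_F^{-n-k} \to 0
\]
as sums $a + b$ gives $\Sigma_{k+1}(v) = \sum_{a} \pi(u(a\e))\,\Sigma_k(v)$, where $a$ runs over representatives of $\mi_F^{-1-n-k}/\mi_F^{-n-k}$. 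Substituting the inductive formula $\Sigma_k(v) = \theta'^k v + \eta v_1^{(k)} + \eta v_2^{(k)}$ splits the problem into three sums.

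For the $\theta'^k v$ summand, Proposition~\ref{prop:theta} applied at level $n+k$ directly gives $\sum_a \pi(u(a\e))\theta'^k v = \theta'^{k+1}v - \eta\theta'^k v$. For the two $\eta v_i^{(k)}$ summands, I use the conjugation identity $\eta^{-1} u(a\e) \eta = u(\p^2 a\e)$, which factors $\eta$ to the outside: the sum becomes $\eta \sum_a \pi(u(\p^2 a\e)) v_i^{(k)}$. For $i=1$, the scaled indices $\p^2 a$ are exactly representatives of $\mi_F^{-1-(n+k-2)}/\mi_F^{-(n+k-2)}$, so Proposition~\ref{prop:theta} applied to $v_1^{(k)} \in V(n+k-2)$ yields $\theta' v_1^{(k)} - \eta v_1^{(k)}$. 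For $i=2$, the shifted entries $\p^2 a\e$ all lie in $\mi_E^{-(n+k-1)}$, placing $u(\p^2 a\e)$ inside $K_{n+k-1}$; hence each summand fixes $v_2^{(k)}$ and the sum collapses to $q\,v_2^{(k)}$.

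Collecting the pieces yields $\Sigma_{k+1}(v) = \theta'^{k+1}v + \eta v_1^{(k+1)} + \eta v_2^{(k+1)}$ with explicit assignments $v_1^{(k+1)} = \theta' v_1^{(k)} + q v_2^{(k)} \in V(n+k-1)$ and $v_2^{(k+1)} = -\theta'^k v - \eta v_1^{(k)} \in V(n+k)$; membership uses the standard incidence relations $\theta'\colon V(m) \to V(m+1)$ and $\eta\colon V(m) \to V(m+2)$. The main technical obstacle is the asymmetric treatment of the two correction terms: the factor $\p^2$ from conjugation by $\eta$ is precisely what shrinks the level-$(n+k-1)$ correction so that $u(\p^2 a\e)$ is absorbed by $K_{n+k-1}$, whereas the level-$(n+k-2)$ correction is still large enough to require another invocation of Proposition~\ref{prop:theta}. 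Edge cases where $n+k-2$ or $n+k-1$ is negative are handled uniformly by setting the corresponding $v_i^{(k)}$ to zero throughout the induction.
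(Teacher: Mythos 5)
Your proof is correct and follows essentially the same route as the paper's: induction on $k$, peeling off the outermost layer of the quotient $\mi_F^{-k-n}/\mi_F^{-n}$, and applying Proposition~\ref{prop:theta} to the level-$(n+k-1)$ pieces while absorbing the level-$(n+k)$ piece (your $\eta v_2^{(k)}$) into $K$-invariance; the conjugation identity $\eta^{-1}u(a\e)\eta = u(\p^2 a\e)$ you invoke is only a cosmetic rephrasing of the paper applying the proposition directly to $\eta v_1'$, since $\eta$ and $\theta'$ commute.
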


%%%%%%%%%%%%%%%%%%
\begin{proof}
We shall prove the lemma by induction on $k$.
Suppose that $k = 0$.
Then the assertion is true with $v_1 = v_2 = 0$.

Suppose that $k>0$.
Then, by the induction hypothesis, 
we have
\begin{eqnarray*}
& & 
\int_{\mi_F^{-k-n}/\mi_F^{-n}}
\pi\left(
\begin{array}{ccc}
1 & & x\e\\
& 1 & \\
& & 1
\end{array}
\right)vdx  \\
 & = & 
\int_{\mi_F^{-k-n}/\mi_F^{1-k-n}}
\pi\left(
\begin{array}{ccc}
1 & & y\e\\
& 1 & \\
& & 1
\end{array}
\right)
(
\int_{\mi_F^{1-k-n}/\mi_F^{-n}}
\pi\left(
\begin{array}{ccc}
1 & & x\e\\
& 1 & \\
& & 1
\end{array}
\right)vdx) dy\\
& = & 
\int_{\mi_F^{-k-n}/\mi_F^{1-k-n}}
\pi\left(
\begin{array}{ccc}
1 & & y\e\\
& 1 & \\
& & 1
\end{array}
\right)
( \theta^{'k-1} v + \eta v'_1 + \eta v'_2) dy,
\end{eqnarray*}
for some
$v'_1 \in V(n+k-3)$ and $v'_2 \in V(n+k-2)$.
By Proposition~\ref{prop:theta}
and the fact $\eta v'_2 \in  V(n+k)$,
we get
\begin{eqnarray*}
\int_{\mi_F^{-k-n}/\mi_F^{-n}}
\pi\left(
\begin{array}{ccc}
1 & & x\e\\
& 1 & \\
& & 1
\end{array}
\right)vdx  
& = & 
(\theta'-\eta)( \theta^{'k-1} v + \eta v'_1)+q\eta v'_2\\
& = & 
\theta^{'k}v +\eta (\theta' v'_1+qv'_2)
+\eta (-\theta^{'k-1}v-\eta v'_1).
\end{eqnarray*}
Put $v_1 = \theta' v'_1+qv'_2$
and $v_2 = -\theta^{'k-1}v-\eta v'_1$.
This completes the proof.
\end{proof}
\Section{$P_2$-theory}
In subsection~\ref{subsec:p2},
we recall $P_2$-theory for $\mathrm{U}(2,1)$
from \cite{Baruch} section 4,
and relate this to the level raising operators
(see Lemma~\ref{lem:pS}).
In subsection~\ref{subsec:kirillov},
we develop 
$P_2$-theory to the \lq\lq Kirillov models''
for generic representations,
and prove
multiplicity one theorem of newforms for 
generic representations whose conductors differ from 
those of their central characters
(Theorem~\ref{thm:gen_1}).

%%%%%%%%%%%%%%%%%%%%%%%%%%%%%%%%%%
%%%%%%%%%%%%%%%%%%%%%%%%%%%%%%%%%%
\subsection{$P_2$-modules}\label{subsec:p2}
Let $P_2$ be the subgroup of $\mathrm{GL}_2(E)$ consisting of 
the matrices of the form
\[
\left(
\begin{array}{cc}
* & *\\
0 & 1
\end{array}
\right).
\]
We get an isomorphism $T_H U/U_H \simeq P_2$
from the map
\[
t(a) u(x, y) \mapsto 
\left(
\begin{array}{cc}
a & 0\\
0 & 1
\end{array}
\right)
\left(
\begin{array}{cc}
1 & x\\
0 & 1
\end{array}
\right),\ a \in E^\times,\ x, y \in E.
\]
For an irreducible admissible representation $(\pi, V)$ of 
$G$,
we set 
$V_{U_H} = V/\langle \pi(u)v-v\ |\ v \in V, u \in U_H\rangle$.
Then we can regard $V_{U_H}$ as a $P_2$-module.
We denote by $p$ the natural projection from $V$
to $V_{U_H}$.

We recall from \cite{BZ} section 5
the structure of $P_2$-modules.
We put 
\[
Z_2 = 
\left\{
\left(
\begin{array}{cc}
1 & x\\
 & 1
\end{array}
\right)\, \Bigg|\, x \in E
\right\}
\]
and
\[
V_{U_H}(Z_2)
=\langle \pi(z)v -v\ |\ v \in V_{U_H},\ z \in Z_2\rangle.
\]
Then $V_{U_H}(Z_2)$ is a $P_2$-submodule of $V_{U_H}$
and $V_{U_H}/V_{U_H}(Z_2) \simeq V_U$,
where $V_U$ is the unnormalized Jacquet module of $(\pi, V)$.
Let $V_{U, \psi}$ denote the twisted Jacquet module of 
$(\pi, V)$ with respect to $(U, \psi)$.
Then 
$V_{U_H}(Z_2)$ is isomorphic to $(\dim V_{U, \psi}) \cdot
\mathrm{ind}_{Z_2}^{P_2}(\psi)$,
where 
$\psi$ is the character of $Z_2$
defined by 
\[
\psi\left(
\begin{array}{cc}
1 & x\\
 & 1
\end{array}
\right) = \psi_E(x),\ x \in E
\]
and
$\mathrm{ind}$ denotes the compactly supported induction.
We note that $\mathrm{ind}_{Z_2}^{P_2}(\psi)$
is an irreducible $P_2$-module.

We summarize a criterion of genericity and supercuspidality
of $\pi$ in terms of $P_2$-modules:
%%%%%%%%%%%%
\begin{prop}\label{prop:P_2}
Let $(\pi, V)$ be an irreducible admissible representation of
$G$.

(i)
$\pi$ is supercuspidal if and only if $V_{U_H}
= V_{U_H}(Z_2)$;

(ii) $\pi$ is generic if and only if $V_{U_H}(Z_2) \neq \{0\}$.
Moreover, if this is the case, then
$V_{U_H}(Z_2) \simeq \mathrm{ind}_{Z_2}^{P_2}(\psi)$.
\end{prop}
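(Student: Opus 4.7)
The plan is to derive both parts directly from the structural decomposition of the $P_2$-module $V_{U_H}$ stated just before the proposition, combined with the standard characterizations of supercuspidality (via Jacquet modules) and genericity (via the twisted Jacquet module).

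First I would handle part (i). Since $U(2,1)$ has, up to conjugacy, a unique proper parabolic subgroup, namely $B = TU$, supercuspidality of $\pi$ is equivalent to the vanishing of the Jacquet module $V_U$. The structural statement $V_{U_H}/V_{U_H}(Z_2) \simeq V_U$ recalled from \cite{BZ} then immediately gives
\[
V_U = \{0\} \iff V_{U_H} = V_{U_H}(Z_2),
\]
which is exactly part (i).

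For part (ii), the strategy is to exploit the other piece of the $P_2$-structure, namely the isomorphism
\[
V_{U_H}(Z_2) \simeq (\dim V_{U,\psi}) \cdot \mathrm{ind}_{Z_2}^{P_2}(\psi).
\]
Since $\mathrm{ind}_{Z_2}^{P_2}(\psi)$ is a nonzero (irreducible) $P_2$-module, the left-hand side is nonzero if and only if $\dim V_{U,\psi} \geq 1$. By Frobenius reciprocity one has $V_{U,\psi}^{*} \simeq \mathrm{Hom}_U(\pi, \psi)$, so $\dim V_{U,\psi} \geq 1$ is equivalent to $\pi$ being generic in the sense defined earlier in the paper. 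This proves the first assertion of (ii). For the \emph{moreover} part, I would invoke the uniqueness of the Whittaker functional already noted in Section 1, which gives $\dim \mathrm{Hom}_U(\pi, \psi) \leq 1$, hence $\dim V_{U,\psi} = 1$ when $\pi$ is generic. Plugging this into the multiplicity formula yields $V_{U_H}(Z_2) \simeq \mathrm{ind}_{Z_2}^{P_2}(\psi)$.

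There is no serious obstacle: both parts are essentially bookkeeping on top of the Bernstein–Zelevinsky structure theorem for $P_2$-modules and the two standard facts (vanishing of $V_U$ characterizes supercuspidals for $U(2,1)$, and uniqueness of Whittaker models). The only mild subtlety is ensuring that $\mathrm{ind}_{Z_2}^{P_2}(\psi)$ is indeed nonzero and irreducible as a $P_2$-module, but this is classical and already cited in the discussion preceding the proposition.
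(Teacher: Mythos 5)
Your argument is correct and matches exactly what the paper leaves implicit: the paper states this proposition as a consequence of the Bernstein--Zelevinsky $P_2$-structure recalled just above it (citing Baruch and \cite{BZ}) without writing out the deduction. Your two steps --- using $V_{U_H}/V_{U_H}(Z_2)\simeq V_U$ together with the Jacquet--Casselman criterion for the unique proper parabolic $B$, and using $V_{U_H}(Z_2)\simeq(\dim V_{U,\psi})\cdot\mathrm{ind}_{Z_2}^{P_2}(\psi)$ together with $\dim\mathrm{Hom}_U(\pi,\psi)\leq 1$ --- are precisely the intended justification.
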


The following lemma gives a criterion 
whether or not, a $K_n$-fixed vector 
comes via the operator $\eta$,
which is our main tool to prove the uniqueness of newform.
%%%%%%%%%%%
\begin{lem}\label{lem:pS}
Let $(\pi, V)$ be an irreducible admissible representation of
$G$
and let $n$ be an integer such that $n \geq 2$ and $n > n_\pi$.
Suppose that an element $v$ in $V(n)$
satisfies $p((S-q^4)v) = 0$.
Then $v$ belongs to $\eta V(n-2)$.
\end{lem}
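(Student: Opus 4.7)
Setting $w := (S-q^4)v$, the claim reduces to showing $w = 0$: by Lemma~\ref{lem:structure2}, $v \in \eta V(n-2)$ is equivalent to $v$ being fixed by $U(\mi_E^{-1})$, equivalently $Sv = q^4 v$, equivalently $w = 0$.

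A free algebraic consequence of the construction is that $Sw = 0$. Indeed, $Sv$ is automatically $U(\mi_E^{-1})$-fixed, so $S(Sv) = q^4 Sv$, and therefore $Sw = S(Sv - q^4 v) = q^4 Sv - q^4 Sv = 0$. Consequently, once I show that $w$ is \emph{also} fixed by $U(\mi_E^{-1})$, the identity $Sw = q^4 w$ combined with $Sw = 0$ yields $q^4 w = 0$, hence $w = 0$. My plan for forcing $w$ to be $U(\mi_E^{-1})$-fixed is to apply Lemma~\ref{lem:325} with $v$ there replaced by $w$.

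Verifying the fixedness hypotheses of Lemma~\ref{lem:325} for $w$ should go exactly as in the proof of Lemma~\ref{lem:eta}: the vector $v$ itself is fixed by the subgroups (i)--(iii) because $v \in V(n)$, and $Sv$ is fixed by them as well---(i) because elements there lie in $U_H \cap K_n$, are central in $U$, and fix $v$; (ii) because the torus there normalizes both $U(\mi_E^{-1})$ and $U(\ri_E)$ and fixes $v$; and (iii) via the enlarged group $K' = \eta(1 + M_3(\mi_E^{n-1}))\eta^{-1} \cap G$, which contains (iii), is normalized by $U(\mi_E^{-1}) = \eta U(\ri_E)\eta^{-1}$, and fixes $v$ because $K' \subseteq Z_{n-1}K_n$ and $n-1 \geq n_\pi$ makes $Z_{n-1}$ act trivially.

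The main obstacle is verifying the summation identity $\sum_{x \in \mi_F^{-1-n}/\mi_F^{-n}}\pi(u(0,x\e))w = 0$. The hypothesis $p(w) = 0$ translates, via the intertwining $p\circ S = \bar S\circ p$ together with the fact that $\bar S$ acts on $V_{U_H}$ as $q^4$ times the projector onto $Z_2(\mi_E^{-1})$-fixed vectors (because $U_H$ acts trivially on $V_{U_H}$ and so $\bar S$ factors through $U(\mi_E^{-1})/U_H(\mi_E^{-1}) \simeq Z_2(\mi_E^{-1})$), into the statement that $p(v)$ is fixed by $Z_2(\mi_E^{-1})$ in $V_{U_H}$. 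To bootstrap this $V_{U_H}$-level fixedness to the desired identity in $V$, I would combine the explicit expansion $w = q^2 \sum_{x_0 \in \mi_E^{-1}/\ri_E}(\pi(u(x_0,y(x_0))) - 1)v$ (available because $n \geq 2$ ensures $v$ is $U_H(\mi_E^{-1})$-fixed) with the $P_2$-module structure of Proposition~\ref{prop:P_2}; this is where the real subtlety lies.
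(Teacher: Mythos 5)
Your reduction is sound as far as it goes: setting $w = (S - q^4)v$, the goal is indeed $w = 0$, you correctly observe $Sw = 0$ (since $Sv$ is automatically $U(\mi_E^{-1})$-fixed, so $S^2 v = q^4 Sv$), and your verification that $w$ is fixed by the subgroups (ii) and (iii) of Lemma~\ref{lem:325} by mimicking the proof of Lemma~\ref{lem:eta} is fine. But the argument breaks precisely at the step you flag and then leave unresolved: nothing in the hypothesis gives you the summation identity $\sum_{x \in \mi_F^{-1-n}/\mi_F^{-n}}\pi(u(0,x\e))w = 0$ required by Lemma~\ref{lem:325}. The condition $p(w) = 0$ only says that $w$ lies in $V(U_H) = \langle \pi(u)v' - v' : u \in U_H, v' \in V\rangle$, which is equivalent to the vanishing of $\int_{\mi_F^{-k-n}}\pi(u(0,x\e))w\,dx$ for \emph{some} nonnegative integer $k$ --- and that $k$ may be arbitrarily large. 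Deducing the $k=1$ vanishing you need from the $V_{U_H}$-level statement is not a formal consequence; it is essentially the content one must prove. Your proposed route via the expansion $w = q^2\sum_{x_0}(\pi(u(x_0,y(x_0)))-1)v$ and the $P_2$-module structure is never carried out, and it is not clear how it would close the gap.

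The paper handles exactly this difficulty with a recursive descent that you have not reproduced. Starting from the vanishing of the large integral $\int_{\mi_F^{-k-n}}\pi(u(0,x\e))w\,dx = 0$, one uses the centrality of $U_H$ in $U$ to pull $(S-q^4)$ outside and obtain $(S-q^4)\int_{\mi_F^{-k-n}/\mi_F^{-n}}\pi(u(0,x\e))v\,dx = 0$. Lemma~\ref{lem:u} then rewrites this integral as $\theta^{'k}v + \eta v_1 + \eta v_2$ with the $\eta$-terms killed by $(S-q^4)$, so that $(S-q^4)\theta^{'k}v = 0$, i.e.\ $\theta^{'k}v \in \eta V(n+k-2)$. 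Finally Lemma~\ref{lem:eta} is applied $k$ times to descend from $\theta^{'k}v \in \eta V(n+k-2)$ to $v \in \eta V(n-2)$. In effect, Lemma~\ref{lem:325} is applied at each step of this descent to $\theta^{'(j)}v$ for $j = k-1, k-2, \dots, 0$, where at each stage the summation hypothesis \emph{is} available because of the previous step's conclusion; you cannot jump straight to the $j = 0$ instance. You will need to incorporate Lemmas~\ref{lem:u} and~\ref{lem:eta} (or an equivalent induction over the size of the annihilating compact subgroup of $U_H$) to make the argument complete.
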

%%%%%%%%%%%
\begin{proof}
The assumption
implies
$(S-q^4)v \in \langle \pi(u)v-v\ |\ v \in V, u \in U_H\rangle$.
So there exists a non-negative integer $k$ such that
\[
\int_{\mi_F^{-k-n}}
\pi\left(
\begin{array}{ccc}
1 & & x\e\\
& 1 & \\
& & 1
\end{array}
\right)(S-q^4)vdx = 0.
\]
Since $U_H$ is the center of $U$,
we get 
\[
(S-q^4)\int_{\mi_F^{-k-n}/\mi_F^{-n}}
\pi\left(
\begin{array}{ccc}
1 & & x\e\\
& 1 & \\
& & 1
\end{array}
\right)vdx = 0.
\]
It follows from Lemma~\ref{lem:u},
there are $v_1 \in V(n+k-2)$ and $v_2 \in V(n+k-1)$ such that
\[
\int_{\mi_F^{-k-n}/\mi_F^{-n}}
\pi\left(
\begin{array}{ccc}
1 & & x\e\\
& 1 & \\
& & 1
\end{array}
\right)vdx = \theta^{'k} v + \eta v_1 + \eta v_2.
\]
So we have
\[
(S-q^4)(\theta^{'k} v + \eta v_1 + \eta v_2)
= (S-q^4)\theta^{'k}v = 0.
\]
This implies 
$\theta^{'k}v \in \eta V(n+k-2)$.

If $k = 0$, then we have $v \in \eta V(n-2)$, as required.
Suppose that $k > 0$.
Then, by Lemma~\ref{lem:eta}, we get $\theta^{'k-1}v \in \eta V(n+k-3)$.
By repeating this argument,
we conclude that $v \in \eta V(n-2)$.
\end{proof}

In the remaining of this subsection,
we apply Lemma~\ref{lem:pS}
to non-generic representations.

%%%%%%%%%%%%%%%%
\begin{lem}\label{lem:non-generic}
Let $(\pi, V)$ be an irreducible non-generic representation of 
$G$.
Then we have
$V(n) = \eta V(n-2)$, for any integer $n$
such that $n \geq 2$
and $n > n_\pi$.
\end{lem}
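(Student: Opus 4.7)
My plan is to invoke Lemma~\ref{lem:pS} pointwise: for each $v \in V(n)$ with $n \geq 2$ and $n > n_\pi$, once I verify the hypothesis $p((S-q^4)v) = 0$ in $V_{U_H}$, the lemma delivers $v \in \eta V(n-2)$. The reverse inclusion $\eta V(n-2) \subseteq V(n)$ is automatic since $K_n \subseteq \eta K_{n-2}\eta^{-1}$, so this yields the desired equality $V(n) = \eta V(n-2)$.

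The first simplification is to exploit non-genericity. By Proposition~\ref{prop:P_2}(ii), $\pi$ non-generic forces $V_{U_H}(Z_2) = \{0\}$, so the canonical surjection $V_{U_H} \twoheadrightarrow V_{U_H}/V_{U_H}(Z_2) \simeq V_U$ is an isomorphism. It therefore suffices to show that $(S-q^4)v$ has zero image in the Jacquet module $V_U$.

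For this I would compute $Sv$ directly. Since $v$ is fixed by $K_n \supseteq U(\ri_E)$, the integral defining $Sv$ collapses to a finite sum $\sum_u \pi(u)v$ over coset representatives for $U(\mi_E^{-1})/U(\ri_E)$. In $V_U$, each $\pi(u)v$ with $u \in U$ has the same image as $v$, so $Sv$ maps to $[U(\mi_E^{-1}) : U(\ri_E)]\cdot \bar v$ in $V_U$. A coordinate count on $U$ (parametrized by the three $F$-coordinates $\mathrm{Re}(x), \mathrm{Im}(x), \mathrm{Im}(y)$, with $\mathrm{Re}(y)$ determined by the constraint $y+\bar y+x\bar x=0$ since $p\neq 2$) gives this index equal to $q \cdot q \cdot q^2 = q^4$. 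Hence $(S-q^4)v$ maps to zero in $V_U$, and therefore to zero in $V_{U_H}$, which is what Lemma~\ref{lem:pS} needs.

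The only substantive computation is the index $[U(\mi_E^{-1}) : U(\ri_E)] = q^4$; it is elementary given the unramifiedness of $E/F$, but the precise value is precisely what matches the normalization of $S$, so it is exactly the equality $q^4$ that makes the argument work. Everything else is an immediate consequence of Proposition~\ref{prop:P_2}(ii) and Lemma~\ref{lem:pS}.
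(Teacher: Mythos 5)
Your argument is correct and follows essentially the same route as the paper: both reduce to Lemma~\ref{lem:pS} by showing $p((S-q^4)v)=0$, using $V_{U_H}(Z_2)=\{0\}$ from Proposition~\ref{prop:P_2}(ii). The only cosmetic difference is that you pass explicitly through $V_U$ and spell out the index $[U(\mi_E^{-1}):U(\ri_E)]=q^4$, whereas the paper observes directly that $p((S-q^4)v)$ lies in $V_{U_H}(Z_2)$ because $S-q^4$ acts through $U/U_H\simeq Z_2$ (the same $q^4$ being implicit there in the normalization of $S$).
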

%%%%%%%%%%%%%%%%
\begin{proof}
It suffices to prove that $V(n) \subset \eta V(n-2)$.
Due to Proposition~\ref{prop:P_2} (ii),
we have
$V_{U_H}(Z_2) = \{0\}$.
For $v \in V(n)$,
we see that $p((S-q^4)v) \in V_{U_H}(Z_2) = \{0\}$
since $U/U_H \simeq Z_2$.
Thus we get $v \in \eta V(n-2)$
by Lemma~\ref{lem:pS}.
\end{proof}

%%%%%%%%%%%%%%%%%%
\begin{thm}\label{thm:non_gen}
If an irreducible non-generic representation $(\pi, V)$
of $G$
admits a newform,
then we have
\[
N_\pi = 0, 1,\ or\ N_\pi = n_\pi.
\]
\end{thm}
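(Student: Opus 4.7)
The proof should be an essentially immediate consequence of Lemma~\ref{lem:non-generic}, combined with the Remark that $N_\pi \geq n_\pi$. The plan is to argue by contradiction that $N_\pi$ cannot simultaneously satisfy $N_\pi \geq 2$ and $N_\pi > n_\pi$, so that the only remaining possibilities are $N_\pi \in \{0,1\}$ or $N_\pi = n_\pi$.

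More precisely, I would suppose for contradiction that $(\pi, V)$ is irreducible non-generic, admits a newform, and satisfies both $N_\pi \geq 2$ and $N_\pi > n_\pi$. Setting $n = N_\pi$, the hypotheses of Lemma~\ref{lem:non-generic} are satisfied, so we obtain
\[
V(N_\pi) = \eta V(N_\pi - 2).
\]
By the defining minimality of $N_\pi$, however, $V(N_\pi - 2) = \{0\}$, whence $V(N_\pi) = \{0\}$. This contradicts the fact that $V(N_\pi)$ contains a newform.

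Therefore either $N_\pi \leq 1$ (giving $N_\pi = 0$ or $N_\pi = 1$) or $N_\pi \leq n_\pi$. In the latter case, combining with the inequality $N_\pi \geq n_\pi$ recorded in the Remark following the definition of conductor yields $N_\pi = n_\pi$, which completes the proof. The argument is essentially a one-line consequence of Lemma~\ref{lem:non-generic}, and there is no real obstacle to overcome — the substantive work has already been done in establishing that lemma (which in turn relied on the $P_2$-theoretic criterion of non-genericity in Proposition~\ref{prop:P_2}(ii) and the level-raising machinery of Lemma~\ref{lem:pS}).
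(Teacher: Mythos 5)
Your proof is correct and takes essentially the same approach as the paper: a direct appeal to Lemma~\ref{lem:non-generic} to conclude $V(N_\pi) = \eta V(N_\pi - 2) = \{0\}$ whenever $N_\pi \geq 2$ and $N_\pi > n_\pi$, contradicting the minimality of $N_\pi$. Your additional remark that $N_\pi \leq n_\pi$ combined with $N_\pi \geq n_\pi$ forces $N_\pi = n_\pi$ simply makes explicit a step the paper leaves implicit.
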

%%%%%%%%%%%%%%%%
\begin{proof}
Suppose that $N_\pi \geq 2$ and $N_\pi > n_\pi$.
Then by Lemma~\ref{lem:non-generic},
we have
$V(N_\pi) = \eta V(N_\pi-2) =\{0\}$.
This contradicts the choice of $N_\pi$.
So we have $N_\pi < 2$ or $N_\pi = n_\pi$.
\end{proof}

%%%%%%%%%%%%%%%%%%%%%%%%%%%%%%%%%%
\subsection{\lq\lq Kirillov model''}\label{subsec:kirillov}
In the remaining of this section,
we assume that
$(\pi, V)$ is an irreducible generic representation 
of $G$.
For $v \in V$, we define a function $\varphi_v$ on $E^\times$
by
\begin{eqnarray*}
\varphi_v(a)
=
W_v\left(
\begin{array}{ccc}
a & &\\
& 1 & \\
& & \overline{a}^{-1}
\end{array}
\right),\ a \in E^\times.
\end{eqnarray*}
Let $\mathcal{C}^\infty(E^\times)$ denote the space
of locally constant functions on $E^\times$.
Then we get a map $V \rightarrow \mathcal{C}^\infty(E^\times);
v \mapsto \varphi_v$.
It is easy to observe
that $\langle \pi(u)v-v\ |\ v \in V, u \in U_H\rangle$
lies in the kernel of this map.
We therefore obtain a map 
$V_{U_H} \rightarrow \mathcal{C}^\infty(E^\times);
p(v) \mapsto \varphi_v$.
We define an action of $P_2$ on 
$\mathcal{C}^\infty(E^\times)$ by
\begin{eqnarray*}
\left(
\left(
\begin{array}{cc}
a & x\\
0 & 1
\end{array}
\right)
\varphi
\right)(b)
= \psi_E(bx)\varphi(ab),\ a, b \in E^\times, x \in E.
\end{eqnarray*}
Then the map
$V_{U_H} \rightarrow \mathcal{C}^\infty(E^\times);
p(v) \mapsto \varphi_v$ is a $P_2$-homomorphism.

%%%%%%%%%%%%%%
\begin{lem}\label{lem:varphi}
For $v \in V$,
there exists a non-negative integer $m$ such that
$\supp \varphi_v \subset \mi_E^{-m}$.
\end{lem}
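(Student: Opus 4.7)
The plan is to exploit the smoothness of $v$ together with the Whittaker transformation law $W_v(ug) = \psi(u) W_v(g)$ for $u \in U$. First I would choose an open compact subgroup of $G$ fixing $v$; its intersection with $U$ is an open subgroup of $U$, and projecting via the coordinate $u(x,y) \mapsto x$ produces an open subgroup of the additive group $E$, which necessarily contains some $\mi_E^N$ with $N \geq 0$. Consequently, for every $x \in \mi_E^N$ there exists $y \in E$ with $y + \overline{y} + x\overline{x} = 0$ such that $\pi(u(x,y))v = v$.

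Next I would perform the direct matrix computation
\[
t(a)\, u(x,y)\, t(a)^{-1} = u(ax,\, a\overline{a}\, y), \qquad a \in E^\times,\ u(x,y) \in U,
\]
where $t(a) = \mathrm{diag}(a, 1, \overline{a}^{-1})$. Combining the $U$-fixity of $v$ with the Whittaker property then yields, for any such $x$,
\[
\varphi_v(a) = W_v(t(a)) = W_v(t(a) u(x,y)) = W_v(u(ax,\, a\overline{a}\, y)\, t(a)) = \psi_E(ax)\, \varphi_v(a).
\]
Hence either $\varphi_v(a) = 0$, or $\psi_E(ax) = 1$ for every $x \in \mi_E^N$.

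Finally I would observe that when $\nu_E(a) < -N$ the latter alternative fails: the set $\{ ax : x \in \mi_E^N\}$ equals $\mi_E^{\nu_E(a)+N}$, which contains $\mi_E^{-1}$; since $\psi_E$ has conductor $\ri_E$, it is non-trivial on $\mi_E^{-1}$, so some $x \in \mi_E^N$ produces $\psi_E(ax) \neq 1$. Therefore $\varphi_v(a) = 0$ whenever $\nu_E(a) < -N$, i.e.\ $\supp \varphi_v \subset \mi_E^{-N}$, and $m = N$ does the job. There is essentially no real obstacle: the only delicate point is verifying that the open subgroup of $U$ fixing $v$ projects onto a neighbourhood of $0$ in the $x$-coordinate (rather than being squeezed into a proper coset), but this follows from continuity of $\pi$ combined with the filtration structure of $U$ coming from the exact sequence $0 \to U_H \to U \to E \to 0$.
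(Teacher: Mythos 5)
Your proof is correct and follows essentially the same route as the paper: both use smoothness to find an open subgroup of $U$ fixing $v$ whose $x$-coordinates fill out some $\mi_E^N$, then combine the Whittaker transformation law with the conjugation $t(a)u(x,y)t(a)^{-1} = u(ax, a\overline{a}y)$ to deduce $\varphi_v(a) = \psi_E(ax)\varphi_v(a)$, forcing $\supp\varphi_v \subset \mi_E^{-N}$. The paper's version is just slightly more explicit, taking $v$ fixed by $(1+M_3(\mi_E^m))\cap G$ and using the concrete lifts $u(x, -x\overline{x}/2)$ rather than an abstract projection argument.
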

%%%%%%%%%%%
\begin{proof}
Take a non-negative integer $m$ such that
$v$ is fixed by $(1+M_3(\mi_E^m))\cap G$.
For $a \in E^\times$ and $x \in \mi_E^{m}$,
we have
\begin{eqnarray*}
\varphi_v(a) & = & l(\pi(t(a))v) = l(\pi(t(a)u(x, -x\overline{x}/2))v)
= l(\pi(u(ax, -ax\overline{ax}/2)t(a))v)\\
& = &\psi_E(ax) l(\pi(t(a))v)
 = \psi_E(ax) \varphi_v(a)
\end{eqnarray*}
since $\pi(u(x, -x\overline{x}/2))v = v$.
This implies that $a\mi_E^{m} \subset \ri_E$
if $a \in \supp \varphi_v$.
So we get $\supp \varphi_v \subset \mi_E^{-m}$.
\end{proof}

%%%%%%%%%%%%%%
\begin{cor}\label{cor:4.5}
Let $n$ be a non-negative integer.
For any $v \in V(n)$,
the function
$\varphi_v$ is $\ri_E^\times$-invariant 
and $\supp\varphi_v \subset \ri_E$.
\end{cor}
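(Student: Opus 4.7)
The plan is to prove both claims by exploiting the right $K_n$-invariance of the Whittaker function $W_v$ through two specific families of elements sitting inside $K_n$: the diagonal torus elements $t(u)$ with $u \in \ri_E^\times$, and the upper-unipotent elements in $U(\ri_E)$. The computation will hinge on the Whittaker transformation law together with the simple conjugation relation between $t(a)$ and $U$, so no deeper machinery is needed.

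First, I would verify the two inclusions $\{t(u):u\in \ri_E^\times\}\subset T_H\cap K_n$ and $U(\ri_E)\subset K_n$ directly from the entry-wise definition of $K_n$; both are immediate because all entries land in $\ri_E$ (and the $(2,2)$-entry is $1$). Together with right $K_n$-invariance of $W_v$ and the obvious fact that $t(a)$ and $t(u)$ commute, this yields
\[
\varphi_v(ua) \;=\; W_v(t(a)t(u)) \;=\; W_v(t(a)) \;=\; \varphi_v(a),
\]
which proves the $\ri_E^\times$-invariance half of the corollary.

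For the support claim, the key is the conjugation identity
\[
t(a)\,u(x,y) \;=\; u\bigl(ax,\,a\overline{a}\,y\bigr)\,t(a),
\]
valid for any $a\in E^\times$ and any $(x,y)$ with $y+\overline{y}+x\overline{x}=0$. Given $x\in\ri_E$, one may choose $y=-x\overline{x}/2\in\ri_E$ (here the hypothesis $p\neq 2$ is used), so that $u(x,y)\in U(\ri_E)\subset K_n$. Applying $W_v$ to both sides of the identity, using right $K_n$-invariance on the left and the Whittaker property $W_v(u(\cdot,\cdot)g)=\psi_E(\cdot)W_v(g)$ on the right, gives
\[
\varphi_v(a) \;=\; \psi_E(ax)\,\varphi_v(a) \qquad\text{for every } x\in\ri_E.
\]
If $\varphi_v(a)\neq 0$, then $\psi_E(ax)=1$ for all $x\in\ri_E$, and since $\psi_E$ has conductor $\ri_E$ this forces $a\in\ri_E$. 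Hence $\supp\varphi_v\subset\ri_E$.

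I do not expect any genuine obstacle: the only minor care needed is to ensure that the auxiliary coordinate $y$ can be chosen in $\ri_E$ for every $x\in\ri_E$ (so that $u(x,y)$ really lies in $U(\ri_E)$), which is exactly what the assumption $p\neq 2$ guarantees through $y=-x\overline{x}/2$. Apart from this, everything reduces to the two inclusions into $K_n$ and the standard Whittaker covariance formula.
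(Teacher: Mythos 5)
Your proof is correct and follows essentially the same route as the paper: the $\ri_E^\times$-invariance comes from right-invariance of $W_v$ under $t(u)$, $u\in\ri_E^\times$, and the support claim comes from the conjugation $t(a)u(x,-x\overline{x}/2)=u(ax,-ax\overline{ax}/2)t(a)$ together with the Whittaker covariance, applied with $x\in\ri_E$. The paper simply packages this last computation as the $m=0$ case of Lemma~\ref{lem:varphi}, but the underlying argument is identical to yours.
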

%%%%%%%%%%%
\begin{proof}
The function $\varphi_v$ is $\ri_E^\times$-invariant 
since $v$ is fixed by 
$t(a)$, $a \in \ri_E^\times$.
Applying the proof of Lemma~\ref{lem:varphi},
we get $\supp\varphi_v \subset \ri_E$.
\end{proof}

Let $\mathcal{C}_c^\infty(E^\times)$ be the subspace
of $\mathcal{C}^\infty(E^\times)$ consisting
of the compactly supported functions.

%%%%%%%%%%%
\begin{prop}\label{prop:killirov}
Let $v$ be an element in $V$ such that $p(v) \in V_{U_H}(Z_2)$.
Then $\varphi_v \in \mathcal{C}_c^\infty(E^\times)$.
\end{prop}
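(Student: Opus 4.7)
The plan is to turn the abstract hypothesis $p(v)\in V_{U_H}(Z_2)$ into a concrete vanishing statement for $\varphi_v$ near the origin of $E$, and then combine with Lemma~\ref{lem:varphi} to obtain compact support on $E^\times$.

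First I reformulate the hypothesis via averaging. Since $Z_2\simeq E$ is an abelian $l$-group and $V_{U_H}$ is smooth, a standard Jacquet-type criterion (one direction from $\int_K(\pi(z_0)-1)w\,dx = 0$ whenever $z_0\in K$, the other from the congruence $\int_K\pi(z)w\,dx\equiv\mathrm{vol}(K)w\pmod{V_{U_H}(Z_2)}$) shows that $p(v)\in V_{U_H}(Z_2)$ if and only if there exists a sufficiently large compact open subgroup $K=\mi_E^m\subset E$ (with $m\ll 0$) such that
\[
\int_K \pi(u(x,y_x))v\,dx \in \ker p,
\]
for any choice of $y_x$ with $u(x,y_x)\in U$.

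Next I evaluate via the Whittaker functional after translating by $t(a)$. Since $\psi|_{U_H}=1$ and $t(a)$ normalizes $U_H$ with $t(a)u(0,y)t(a)^{-1}=u(0,a\overline{a}y)\in U_H$, the linear form $l\circ\pi(t(a))$ vanishes on $\ker p$. Applying it to the integrand and using the commutation $t(a)u(x,y)=u(ax,a\overline{a}y)t(a)$ together with $\psi(u(ax,\cdot))=\psi_E(ax)$, one obtains
\[
\varphi_v(a)\int_K\psi_E(ax)\,dx=0\quad\text{for every } a\in E^\times.
\]
A standard Fourier computation gives $\int_{\mi_E^m}\psi_E(ax)\,dx=\mathrm{vol}(\mi_E^m)$ when $a\in\mi_E^{-m}$ (since $\psi_E$ has conductor $\ri_E$) and $0$ otherwise. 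For $m$ sufficiently negative, $\mi_E^{-m}$ is an arbitrarily small neighborhood of $0$, so $\varphi_v$ vanishes on $\mi_E^{-m}\setminus\{0\}$. Combined with Lemma~\ref{lem:varphi}, which bounds $\supp\varphi_v\subset\mi_E^{-m_0}$ for some $m_0$ depending on $v$, we conclude $\supp\varphi_v\subset\mi_E^{-m_0}\setminus\mi_E^{-m}$, a compact subset of $E^\times$.

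The main obstacle is justifying the Jacquet-type characterization of $V_{U_H}(Z_2)$ used in the first step; this is a standard fact for smooth representations of abelian totally disconnected groups, so no genuine difficulty arises. All remaining steps are routine, relying on the commutator identity $t(a)u(x,y)=u(ax,a\overline{a}y)t(a)$ and the annihilator computation for an additive character.
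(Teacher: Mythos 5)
Your proof is correct, and it takes a slightly different route from the paper's. The paper observes that $V_{U_H}(Z_2)=p(V(U))$ with $V(U)=\langle \pi(u)w-w : w\in V,\ u\in U\rangle$, and since $\varphi$ kills $\ker p$ it suffices to treat a single generator $v'=\pi(u)w-w$. The commutation $t(a)u(x,y)t(a)^{-1}=u(ax,a\overline{a}y)$ immediately gives
$\varphi_{v'}(a)=(\psi_E(ax)-1)\varphi_w(a)$,
so the support requires $ax\notin\ri_E$, bounding $|a|$ from below; Lemma~\ref{lem:varphi} bounds it from above. No integration is needed. You instead invoke the annihilation criterion for Jacquet modules ($p(v)\in V_{U_H}(Z_2)$ iff $\int_K \pi(z)p(v)\,dz=0$ for some large compact open $K\subset Z_2$), translate by $t(a)$, and obtain the vanishing of $\varphi_v$ near $0$ from the Fourier identity $\int_{\mi_E^m}\psi_E(ax)\,dx=0$ when $a\notin\mi_E^{-m}$. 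The two arguments are really the two sides of the same Jacquet--Langlands equivalence: the paper takes the ``spanning set'' description of $V_{U_H}(Z_2)$ and avoids any machinery beyond the commutation relation, while your averaging argument uses the standard annihilation criterion. Both close the argument in the same way via Lemma~\ref{lem:varphi}. The paper's version is a touch leaner; yours is a useful alternative and generalizes more transparently when only an integral characterization of the Jacquet kernel is available.
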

%%%%%%%%%%%
\begin{proof}
We set $V(U) = \langle \pi(u)v-v\, |\, v \in V, u \in U\rangle$.
Since $V_{U_H}(Z_2) = p(V(U))$,
it is enough to prove that
$\varphi_{\pi(u)v-v} \in  \mathcal{C}_c^\infty(E^\times)$,
for $v \in V$ and $u \in U$.

We write $u = u(x, y)$, where $x, y \in E$ such that $y+\overline{y}
+ x \overline{x} = 0$.
Let $a \in E^\times$ such that $\varphi_{\pi(u)v-v}(a) \neq 0$.
This implies 
$(\psi_E(ax)-1)\varphi_v(a) \neq 0$.
So we get $ax \not\in \ri_E$.
By Lemma~\ref{lem:varphi},
we conclude that $\supp\varphi_{\pi(u)v-v}$ is compact.
\end{proof}

By Proposition~\ref{prop:killirov},
we get a map
$V_{U_H}(Z_2) \rightarrow \mathcal{C}_c^\infty(E^\times);
p(v) \mapsto \varphi_v$.
%%%%%%%%%%%%%%%%%%%%%
\begin{lem}\label{lem:generic_P2}
The map
$V_{U_H}(Z_2) \rightarrow \mathcal{C}_c^\infty(E^\times);
p(v) \mapsto \varphi_v$ is an isomorphism
of $P_2$-modules.
\end{lem}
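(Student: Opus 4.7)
The plan is to exploit the irreducibility of $\mathrm{ind}_{Z_2}^{P_2}(\psi)$: by Proposition~\ref{prop:P_2}~(ii) the domain $V_{U_H}(Z_2)$ is already isomorphic to $\mathrm{ind}_{Z_2}^{P_2}(\psi)$, so it is a simple $P_2$-module. I would also identify the codomain $\mathcal{C}_c^\infty(E^\times)$ with $\mathrm{ind}_{Z_2}^{P_2}(\psi)$ as a $P_2$-module, so that a nonzero $P_2$-homomorphism between them is automatically an isomorphism. The whole proof then reduces to two points: identifying the codomain, and showing the map does not vanish.

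For the codomain identification, every element of $P_2$ is uniquely of the form $\begin{pmatrix} 1 & x \\ 0 & 1 \end{pmatrix}\mathrm{diag}(a,1)$ with $x \in E$ and $a \in E^\times$. Sending $f \in \mathrm{ind}_{Z_2}^{P_2}(\psi)$ to the function $\tilde f(a) = f(\mathrm{diag}(a,1))$ defines a linear isomorphism onto $\mathcal{C}_c^\infty(E^\times)$, and a short computation verifies that it intertwines the induced right-translation action with the $P_2$-action on $\mathcal{C}_c^\infty(E^\times)$ given in the text.

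For the non-vanishing, I would argue by contradiction. If $\varphi_{\pi(u(x,y))v - v} \equiv 0$ on $E^\times$ for every $v \in V$ and every $u(x,y) \in U$, then using the identity $t(a)u(x,y) = u(ax, a\overline{a}y)t(a)$ one computes
\[
0 = \varphi_{\pi(u(x,y))v - v}(a) = (\psi_E(ax) - 1)\varphi_v(a)
\]
for all $a \in E^\times$. Since for each $a \in E^\times$ one can choose $x \in E$ with $\psi_E(ax) \neq 1$, this forces $\varphi_v \equiv 0$, that is, $W_v$ vanishes on $T_H$ for every $v$. Combined with $W_v(zg) = \omega_\pi(z)W_v(g)$ for $z \in Z$, $W_v(ug) = \psi(u)W_v(g)$ for $u \in U$, and the Iwasawa decomposition $G = ZT_HUK_0$, this forces $W_v \equiv 0$ on $G$ for every $v \in V$, contradicting the genericity of $\pi$.

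The main difficulty is the non-vanishing step, where one has to leverage the Iwasawa decomposition together with the central-character identity to propagate the vanishing of $W_v$ on $T_H$ to the whole group $G$. The identification of $\mathcal{C}_c^\infty(E^\times)$ with $\mathrm{ind}_{Z_2}^{P_2}(\psi)$ and the extraction of the isomorphism from simplicity and non-vanishing are essentially formal.
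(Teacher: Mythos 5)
Your proposal is correct and follows essentially the same strategy as the paper: identify the target $\mathcal{C}_c^\infty(E^\times)$ with the irreducible module $\mathrm{ind}_{Z_2}^{P_2}(\psi)$ via $f \mapsto \bigl(a \mapsto f(\mathrm{diag}(a,1))\bigr)$, invoke Proposition~\ref{prop:P_2}(ii) for the domain, and reduce the claim to non-vanishing of a nonzero map between two copies of an irreducible $P_2$-module. The only divergence is in how you establish non-vanishing. The paper argues affirmatively: by genericity there is $v$ with $l(v) = \varphi_v(1) \neq 0$; picking $x \in E$ with $\psi_E(x) \neq 1$ and $u = u(x, -x\overline{x}/2)$, one reads off $\varphi_{\pi(u)v-v}(1) = (\psi_E(x)-1)\varphi_v(1) \neq 0$ directly. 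You instead argue by contradiction, deducing $\varphi_v \equiv 0$ for all $v$ from the same identity $(\psi_E(ax)-1)\varphi_v(a)=0$, and then propagating $W_v \equiv 0$ on all of $G$ via the Iwasawa decomposition $G = ZT_HUK_0$ and the $(Z,U)$-equivariance of Whittaker functions; this contradicts genericity. That is sound, but it is more machinery than needed: once $\varphi_v \equiv 0$ for all $v$, evaluating at $a=1$ already gives $l(v)=0$ for all $v$, i.e.\ $l=0$, which contradicts genericity with no Iwasawa decomposition at all. So the step you single out as the main difficulty is in fact the easy part, and the paper collapses it to a one-line calculation at $a=1$.
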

%%%%%%%%%%%%%%%%%%%%%
\begin{proof}
We claim that the map
$V_{U_H}(Z_2) \rightarrow \mathcal{C}_c^\infty(E^\times);
p(v) \mapsto \varphi_v$ is not zero.
Let $v$ be an element in $V$ such that $l(v) \neq 0$.
This implies $\varphi_v(1) \neq 0$.
Take an element $x$ in $E$ such that $\psi_E(x) \neq 1$
and
put $u = u(x, -x\overline{x}/2)$.
Then we have $\varphi_{\pi(u)v-v}(1)
= (\psi_E(x)-1)\varphi_v(1) \neq 0$.
So we conclude that 
the map
$V_{U_H}(Z_2) \rightarrow \mathcal{C}_c^\infty(E^\times);
p(v) \mapsto \varphi_v$ is not zero.

For $f \in \mathrm{ind}_{Z_2}^{P_2}(\psi)$,
we define $T(f) \in \mathcal{C}_c^\infty(E^\times)$
by
\[
T(f)(a) = 
f\left(
\begin{array}{cc}
a & 0\\
0 & 1
\end{array}
\right),\ a \in E^\times.
\]
Then we get an isomorphism of $P_2$-modules
$T: \mathrm{ind}_{Z_2}^{P_2}(\psi) \simeq \mathcal{C}_c^\infty(E^\times); f \mapsto T(f)$.
Since
$V_{U_H}(Z_2) \simeq 
\mathrm{ind}_{Z_2}^{P_2}(\psi) \simeq \mathcal{C}_c^\infty(E^\times)$
and $\mathrm{ind}_{Z_2}^{P_2}(\psi)$ is irreducible,
we see that
the map
$V_{U_H}(Z_2) \rightarrow \mathcal{C}_c^\infty(E^\times);
p(v) \mapsto \varphi_v$ is an isomorphism of $P_2$-modules.
\end{proof}

Now we get a criterion whether a vector in $V(n)$
lies in $\eta V(n-2)$,
in terms of Kirillov model.
%%%%%%%%%%%
\begin{lem}\label{lem:phi}
Let $n$ be an integer such that 
$n \geq 2$ and
$n> n_\pi$.
Suppose that $v \in V(n)$ satisfies $\supp\varphi_v \subset \mi_E$.
Then $v \in \eta V(n-2)$.
\end{lem}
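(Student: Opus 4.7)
The plan is to invoke Lemma~\ref{lem:pS}, which under the assumptions $n\geq 2$ and $n>n_\pi$ reduces the desired conclusion $v\in\eta V(n-2)$ to verifying the single identity $p((S-q^4)v)=0$ in $V_{U_H}$.

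Two preliminary reductions should be automatic. Since $\mathrm{vol}(U(\mi_E^{-1}))/\mathrm{vol}(U(\ri_E))=q^4$, one rewrites
\[
(S-q^4)v=\frac{1}{\mathrm{vol}(U(\ri_E))}\int_{U(\mi_E^{-1})}(\pi(u)v-v)\,du,
\]
which by local constancy of $u\mapsto\pi(u)v$ is a finite linear combination of vectors of the form $\pi(u)v-v$. Hence $(S-q^4)v$ lies in $V(U)=\langle\pi(u)w-w\mid u\in U,\,w\in V\rangle$, so $p((S-q^4)v)\in p(V(U))=V_{U_H}(Z_2)$. By the $P_2$-isomorphism $V_{U_H}(Z_2)\simeq\mathcal{C}_c^\infty(E^\times)$ of Lemma~\ref{lem:generic_P2}, it then suffices to prove $\varphi_{(S-q^4)v}\equiv 0$ on $E^\times$.

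The substantive step is to evaluate $\varphi_{Sv}$ explicitly via the Kirillov model. Using the conjugation identity $t(a)u(x,y)t(a)^{-1}=u(ax,a\overline{a}y)$ together with the Whittaker transformation $l(\pi(u(x,y))w)=\psi_E(x)l(w)$, I obtain
\[
\varphi_{Sv}(a)=\varphi_v(a)\cdot\frac{1}{\mathrm{vol}(U(\ri_E))}\int_{U(\mi_E^{-1})}\psi_E(ax)\,du.
\]
Because the integrand depends only on the $x$-coordinate, the integral factors: the $U_H$-direction contributes the index $[\sqrt{\epsilon}\mi_F^{-2}:\sqrt{\epsilon}\ri_F]=q^2$, and standard character orthogonality gives $\int_{\mi_E^{-1}}\psi_E(ax)\,dx=q^2\,\mathbf{1}[a\in\mi_E]$. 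Multiplying yields $\varphi_{Sv}(a)=q^4\,\mathbf{1}[a\in\mi_E]\varphi_v(a)$, so that $\varphi_{(S-q^4)v}(a)=-q^4\,\mathbf{1}[a\notin\mi_E]\varphi_v(a)$, which vanishes identically under the hypothesis $\supp\varphi_v\subset\mi_E$. Applying Lemma~\ref{lem:pS} then concludes the argument.

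The main delicacy I anticipate is the parameterization of $U(\mi_E^{-1})$: the relation $y+\overline{y}+x\overline{x}=0$ together with $x\in\mi_E^{-1}$ forces $y\in\mi_E^{-2}$ (not $\mi_E^{-1}$), so $U_H\cap U(\mi_E^{-1})=u(0,\sqrt{\epsilon}\mi_F^{-2})$ and the total index is $q^4$. This factor of $q^4$ must match the eigenvalue of $S$ on vectors fixed by $U(\mi_E^{-1})$, and is exactly what produces the indicator $\mathbf{1}[a\in\mi_E]$ needed to pair with the support condition $\supp\varphi_v\subset\mi_E$.
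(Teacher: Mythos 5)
Your proof is correct and follows essentially the same route as the paper: reduce via Lemma~\ref{lem:pS} to showing $p((S-q^4)v)=0$, note that $p((S-q^4)v)\in V_{U_H}(Z_2)$, and use the injectivity of $p(w)\mapsto\varphi_w$ on $V_{U_H}(Z_2)$ (Lemma~\ref{lem:generic_P2}) to reduce to the vanishing of $\varphi_{(S-q^4)v}$. The paper dismisses the computation of $\varphi_{Sv}(a)=q^4\mathbf{1}[a\in\mi_E]\varphi_v(a)$ as ``easy to check''; you have filled it in correctly, including the $[U(\mi_E^{-1}):U(\ri_E)]=q^4$ index count split as $q^2\cdot q^2$ between the $x$- and $y$-directions.
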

%%%%%%%%%%%
\begin{proof}
It is easy to check that
the assumption implies
$\varphi_{(S-q^4)v} = 0$.
Since $U/U_H \simeq Z_2$,
we have 
$p((S-q^4)v) \in V_{U_H}(Z_2)$.
Thus
Lemma~\ref{lem:generic_P2}
says that $p((S-q^4)v) = 0$.
By Lemma~\ref{lem:pS},
we obtain
$v \in \eta V(n-2)$.
\end{proof}

Lemma~\ref{lem:phi} shows 
the following theorem, which bounds the growth of the dimensions
of oldforms.
%%%%%%%%%
\begin{thm}\label{thm:1}
Let $(\pi, V)$ be an irreducible generic representation of $G$.
For any non-negative integer $n$ such that $n+2 > n_\pi$,
we have
\[
\dim V(n+2) -\dim V(n) \leq 1.
\]
\end{thm}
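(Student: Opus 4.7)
The plan is to build a linear map from $V(n+2)$ to $\C$ whose kernel is contained in $\eta V(n)$, which will immediately give the desired dimension bound.

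First I would observe that the operator $\eta : V(n) \to V(n+2)$ is injective, because $\pi(\eta)$ is a bijection on $V$ (it is the action of the group element $\eta \in G$). Hence $\dim \eta V(n) = \dim V(n)$, and the theorem reduces to showing
\[
\dim \bigl(V(n+2)/\eta V(n)\bigr) \leq 1.
\]

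Next I would define the linear functional $L : V(n+2) \to \C$ by $L(v) = \varphi_v(1) = W_v(1)$, and argue that $\ker L \subseteq \eta V(n)$. Indeed, by Corollary~\ref{cor:4.5}, for any $v \in V(n+2)$ the function $\varphi_v$ is $\ri_E^\times$-invariant and supported in $\ri_E$. If $L(v) = 0$, then $\varphi_v$ vanishes on $\ri_E^\times$, so $\supp \varphi_v \subset \mi_E$. Since we are assuming $n+2 \geq 2$ and $n+2 > n_\pi$, Lemma~\ref{lem:phi} applies and yields $v \in \eta V(n)$. Consequently the induced map $\bar L : V(n+2)/\eta V(n) \to \C$ is injective, giving the required inequality.

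The substantive content is all packaged in Lemma~\ref{lem:phi}, so there is no real obstacle left: the argument is essentially a one-line application of the Kirillov-model criterion, using $\ri_E^\times$-invariance to reduce a support condition ``$\subset \mi_E$'' to a single vanishing condition at $1$. The only point worth double-checking is the well-definedness of $L$ (immediate, since $W_v$ is a function on $G$) and the injectivity of $\eta$ (immediate from $\eta \in G$).
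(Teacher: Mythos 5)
Your argument is correct and is essentially the paper's proof: the paper phrases it by taking two vectors $v_1,v_2 \in V(n+2)\setminus\eta V(n)$ and showing a suitable linear combination kills $\varphi_{\cdot}(1)$, then appeals to Corollary~\ref{cor:4.5} and Lemma~\ref{lem:phi} to land it in $\eta V(n)$, which is exactly your ``the kernel of $L(v)=\varphi_v(1)$ is contained in $\eta V(n)$'' statement. The linear-functional packaging is a slightly cleaner way to say the same thing.
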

%%%%%%%%%%%
\begin{proof}
Since the map $\eta: V(n) \rightarrow V(n+2)$ is injective,
it is enough to prove $\dim V(n+2) /\eta V(n) \leq 1$.
Let $v_1, v_2$ be elements in $V(n+2)\backslash \eta V(n)$.
Due to Corollary~\ref{cor:4.5},
the function $\varphi_{v_i}$  is $\ri_E^\times$-invariant
and 
$\supp\varphi_{v_i} \subset \ri_E$,
for $i = 1, 2$.
Put 
\[
\alpha = \varphi_{v_2}(1),\ \beta = \varphi_{v_1}(1).
\]
By Lemma~\ref{lem:phi},
we have $\alpha \neq 0$ and $\beta \neq 0$.
Since
$\supp\varphi_{\alpha v_1-\beta v_2} \subset \mi_E$,
Lemma~\ref{lem:phi}
implies $\alpha v_1-\beta v_2 \in \eta V(n)$.
So we conclude that 
$\dim V(n+2) /\eta V(n) \leq 1$.
\end{proof}

Applying Theorem~\ref{thm:1}
to $n = N_\pi-2$,
we obtain the uniqueness of newforms
for generic representations
whose conductors differ from those of their central characters.

%%%%%%%%%%
\begin{thm}\label{thm:gen_1}
Suppose that an irreducible generic representation $(\pi, V)$
of $G$ satisfies
$N_\pi > n_\pi$ and $N_\pi \geq 2$.
Then we have
$\dim V(N_\pi) = 1$.
\end{thm}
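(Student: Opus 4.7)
The plan is essentially to read off the conclusion directly from Theorem~\ref{thm:1}, once the two hypotheses on $N_\pi$ are translated into admissible values of $n$ in that theorem.

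First I would set $n = N_\pi - 2$. The hypothesis $N_\pi \geq 2$ guarantees that $n$ is a non-negative integer, so $V(n)$ is defined, and the hypothesis $N_\pi > n_\pi$ gives $n+2 = N_\pi > n_\pi$, which is exactly the inequality needed to invoke Theorem~\ref{thm:1}. Applying it yields
\[
\dim V(N_\pi) - \dim V(N_\pi-2) \leq 1.
\]

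Next I would use the defining minimality of $N_\pi$: since $N_\pi = \min\{n \geq 0 \mid V(n) \neq \{0\}\}$ and $N_\pi - 2 < N_\pi$, we must have $V(N_\pi-2) = \{0\}$. Substituting gives $\dim V(N_\pi) \leq 1$, while by definition $V(N_\pi) \neq \{0\}$, so $\dim V(N_\pi) = 1$, as required.

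There is essentially no obstacle here; all of the work has been absorbed into Theorem~\ref{thm:1} (which in turn rests on the Kirillov-model analysis of Lemma~\ref{lem:phi} and the $P_2$-theoretic criterion of Lemma~\ref{lem:pS}). The only subtlety worth flagging is that the hypothesis $N_\pi > n_\pi$ is what opens the door to Lemma~\ref{lem:pS} (and hence to Theorem~\ref{thm:1}) at the level $N_\pi$; without it, one cannot conclude $V(N_\pi-2) = \{0\} \Rightarrow \dim V(N_\pi) \leq 1$, and the uniqueness of newforms in that boundary case $N_\pi = n_\pi$ is not addressed by this argument.
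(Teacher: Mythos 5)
Your argument is correct and is essentially identical to the paper's own proof: both apply Theorem~\ref{thm:1} with $n = N_\pi - 2$, use minimality of $N_\pi$ to kill $V(N_\pi-2)$, and conclude $\dim V(N_\pi) = 1$. Your added remark about why $N_\pi > n_\pi$ is needed (to access Lemma~\ref{lem:pS} via Theorem~\ref{thm:1}) is an accurate account of the hypothesis's role.
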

%%%%%%%%%%
\begin{proof}
By Theorem~\ref{thm:1},
we have
$\dim V(N_\pi) -\dim V(N_\pi-2) \leq 1$.
Since $\dim V(N_\pi) \geq 1$ and $\dim V(N_\pi-2) = 0$,
we get $\dim V(N_\pi) = 1$.
\end{proof}

We close this section by showing 
that the newform is a test vector for the Whittaker functional.
%%%%%%%%%%%%%%%%%%%
\begin{thm}\label{thm:test}
Let $\pi$ be an irreducible generic representation of $G$
such that $N_\pi \geq 2$ and $N_\pi > n_\pi$.
For a non-zero element  $v$ in $V(N_\pi)$,
we have $W_v(1) \neq 0$.
\end{thm}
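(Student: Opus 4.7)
The plan is to argue by contradiction: assume $W_v(1) = 0$ and derive that $v$ must be zero, contradicting the hypothesis.

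First, I would translate $W_v(1)$ into the Kirillov-model function $\varphi_v$. By the definition of $\varphi_v$, we have $\varphi_v(1) = W_v(t(1)) = W_v(1)$, so the assumption $W_v(1) = 0$ becomes $\varphi_v(1) = 0$.

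Next I would combine this vanishing with the strong support/invariance constraints coming from the $K_{N_\pi}$-fixity. By Corollary~\ref{cor:4.5}, since $v \in V(N_\pi)$, the function $\varphi_v$ is $\ri_E^\times$-invariant and $\supp \varphi_v \subset \ri_E$. The $\ri_E^\times$-invariance together with $\varphi_v(1)=0$ forces $\varphi_v$ to vanish identically on $\ri_E^\times$, and hence $\supp \varphi_v \subset \mi_E$.

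The key step is now to invoke Lemma~\ref{lem:phi}, whose hypotheses $N_\pi \geq 2$ and $N_\pi > n_\pi$ are precisely what the theorem assumes. That lemma concludes $v \in \eta V(N_\pi-2)$. But $N_\pi$ was defined as the minimum integer $n$ with $V(n) \neq \{0\}$ and $N_\pi - 2 \geq 0$, so $V(N_\pi-2) = \{0\}$, giving $v = 0$. This contradicts the choice of $v$, so $W_v(1) \neq 0$. There is no real obstacle here beyond assembling earlier results correctly; the essential technical content has already been absorbed into Lemma~\ref{lem:phi} (and thus ultimately Lemma~\ref{lem:pS} and the $P_2$-theory), so the proof of Theorem~\ref{thm:test} itself should be very short.
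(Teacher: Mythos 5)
Your proposal is correct and is essentially the paper's own proof, just phrased as a contradiction rather than contrapositively: both use Corollary~\ref{cor:4.5} to get $\ri_E^\times$-invariance and support in $\ri_E$, and both invoke Lemma~\ref{lem:phi} together with $V(N_\pi-2)=\{0\}$ to rule out $\supp\varphi_v\subset\mi_E$.
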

%%%%%%%%%%%%%%%%%%%%%
\begin{proof}
The function $\varphi_v$ is supported by $\ri_E$.
Since $V(N_\pi-2) = \{0\}$,
it follows from Lemma~\ref{lem:phi}
that $\supp\varphi_v \not\subset \mi_E$.
Because the function $\varphi_v$ is $\ri_E^\times$-invariant,
we get $\varphi_v (1) = W_v(1) \neq 0$.
\end{proof}

%%%%%%%%%%%%%%%%%%%%%%%%%%%%%%%%%%
\Section{Main theorems}\label{sec:main}
In this section,
we prove our main results.
We show that
the space of newforms is of dimension one.
Moreover,
we give a formula of the dimensions of the oldforms
for generic representations whose conductors are greater than
those of their central characters.

%%%%%%%%%%%%%%%%%%%%%%%%%%%%%%%%%%
\subsection{Multiplicity one theorem of newforms}

As the well-known fact on $K_0$-fixed vectors,
we have the following:
%%%%%%%%%%%%%%%%%%
\begin{prop}\label{prop:mult_1}
Let $(\pi, V)$ be an irreducible admissible representation of $G$.
Then we have $\dim V(1) \leq 1$.
\end{prop}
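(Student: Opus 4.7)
My plan is to establish the result via Gelfand's trick, with the Iwasawa decomposition $G = BK_1$ from Lemma~\ref{lem:K_1} as the essential input. Introduce the anti-involution $\sigma \colon G \to G$ defined by $\sigma(g) = J \cdot {}^t g \cdot J$; direct computation shows $\sigma^{2} = \mathrm{id}$, that $\sigma$ reverses multiplication, and that $\sigma$ preserves $G$ (indeed $\sigma(g) = \overline{g}^{-1}$ for $g \in G$).

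The first step is to verify $\sigma(K_1) = K_1$. Since $\sigma$ acts on matrix entries by the anti-diagonal reflection $(i,j) \leftrightarrow (4-j, 4-i)$, this reduces to observing that the $K_1$-entry conditions are symmetric under this reflection: the pair $(1,2) \leftrightarrow (2,3)$ both require $\ri_E$, the pair $(2,1) \leftrightarrow (3,2)$ both require $\mi_E$, while $(1,3), (2,2),$ and $(3,1)$ are self-matched with compatible conditions. The crux of the argument is then to show $\sigma(g) \in K_1 g K_1$ for every $g \in G$. By $G = BK_1$ and $\sigma(K_1) = K_1$, this reduces to $b \in B = TU$. For $t = \mathrm{diag}(a, b, \overline{a}^{-1}) \in T$, one computes $\sigma(t) = \mathrm{diag}(\overline{a}^{-1}, b, a)$ and verifies the identity $\sigma(t) = t_1 \cdot t \cdot t_1$ with $t_1 \in K_1 \cap H$ the element appearing in the proof of Lemma~\ref{lem:K_1}. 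For $u = u(x,y) \in U$, one finds $\sigma(u) = u(-\overline{x}, y)$, and $\sigma(u) \in K_1 u K_1$ follows by combining the torus case with a sufficiently fine description of $K_1 \cap U$ together with its $T$-conjugates.

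Granting these verifications, Gelfand's trick yields commutativity of the Hecke algebra $\He(G, K_1)$: the operator $f \mapsto f \circ \sigma$ is simultaneously an anti-automorphism of $\He(G, K_1)$ (since $\sigma$ is an anti-involution) and equal to the identity (since every double coset is preserved), forcing $\He(G, K_1)$ to be abelian. For an irreducible admissible $(\pi, V)$ with $V(1) \neq \{0\}$, admissibility together with irreducibility of $\pi$ shows that $V(1)$ is a finite-dimensional simple module over $\He(G, K_1)$, and any finite-dimensional simple module over a commutative algebra is one-dimensional, giving $\dim V(1) \leq 1$. I expect the main obstacle to lie in the double-coset preservation $\sigma(g) \in K_1 g K_1$ for unipotent $u \in U$, since there is no Cartan-type decomposition with $K_1$ in place of $K_0$ readily available, and one must instead exploit the explicit structure of $K_1$ developed in the preceding sections.
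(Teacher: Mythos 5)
Your proposal takes a genuinely different route from the paper, and it has a gap at the step you yourself flag as the main obstacle. The paper's own proof does not invoke any Hecke-algebra commutativity: since $V(1)\neq\{0\}$ forces $\pi$ to be non-supercuspidal, one embeds $\pi\hookrightarrow\mathrm{Ind}_B^G\chi$ and applies Mackey theory, namely $\dim(\mathrm{Ind}_B^G\chi)^{K_1}$ equals the number of double cosets in $B\backslash G/K_1$ on whose representatives $\chi$ and the trivial character of $K_1$ are compatible. Lemma~\ref{lem:K_1} ($G=BK_1$) says there is only one double coset, so the dimension is at most one. This is short and self-contained.

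Your Gelfand--trick setup is correct as far as it goes: $\sigma(g)=J\,{}^tg\,J$ is an anti-involution of $G$ (indeed $\sigma(g)=\overline{g}^{-1}$ on $G$), it preserves $K_1$ by the anti-diagonal symmetry of the defining congruences, and the torus computation $\sigma(t)=t_1tt_1$ is correct. But the reduction to $B$ does not split further into independent $T$ and $U$ cases. Writing $b=tu$, one has $\sigma(b)=\sigma(u)\sigma(t)$, and even granting $\sigma(t)\in K_1tK_1$ and $\sigma(u)\in K_1uK_1$, the best you would get is $\sigma(b)\in K_1uK_1tK_1$, which is not the same as $K_1tuK_1$. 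The standard way around this in Gelfand's trick is to have a decomposition $G=K T^{+} K$ so that every double coset has a \emph{torus} representative and only the torus case needs checking; but $K_1$ is not a maximal compact and you do not have $G=K_1TK_1$ (you only have $G=BK_1=UTK_1$, and $U\not\subset K_1TK_1$). So the needed statement $\sigma(g)\in K_1gK_1$ for all $g$ is not established for any $g$ with a non-integral unipotent part, and the argument does not close. This is not a cosmetic omission: the whole commutativity of $\mathcal{H}(G,K_1)$ rests on it, and you have no independent reason to believe that step without essentially redoing an analysis equivalent in difficulty to the paper's Mackey-theoretic count. Given that $G=BK_1$ is already proved in Lemma~\ref{lem:K_1}, the principal-series embedding argument is both shorter and avoids the double-coset combinatorics entirely.
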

%%%%%%%%%%%%%%%%%%
\begin{proof}
Let $(\pi, V)$ be an irreducible admissible representation of $G$
such that $V(1) \neq \{0\}$.
It is well known that $\pi$ can not be supercuspidal.
So $\pi$ can be embedded into a parabolically induced representation 
$\mathrm{Ind}_B^G\chi$,
for some quasi-character $\chi$ of $T$.
Let $(\mathrm{Ind}_B^G\chi)^{K_1}$ denote 
the space of the $K_1$-fixed vectors in $\mathrm{Ind}_B^G\chi$.
Then we have 
$\dim V(1) \leq \dim (\mathrm{Ind}_B^G\chi)^{K_1}$.
The dimension of $(\mathrm{Ind}_B^G\chi)^{K_1}$
equals to the number of the elements $g$ in $B\backslash G/K_1$
such that 
$\chi$ is trivial on $B\cap gK_1g^{-1}$.
So Lemma~\ref{lem:K_1} implies $\dim (\mathrm{Ind}_B^G\chi)^{K_1}
\leq 1$.
This completes the proof.
\end{proof}

We shall treat the case when 
Lemma~\ref{lem:pS} is not valid.
In this case,
we use the Hecke algebra isomorphism established by Moy.
%%%%%%%%%%%%%%%%%%%%%%%%%%%%%%%%%%
\begin{thm}\label{thm:cent}
Let $(\pi, V)$ be an irreducible admissible  representation of $G$
which admits a newform.
Suppose that the conductor $N_\pi$ of $\pi$ satisfies $N_\pi \geq 2$ and
$N_\pi = n_\pi$. Then

(i) $\dim V(N_\pi) = 1$;

(ii) $\pi$ is not supercuspidal.
\end{thm}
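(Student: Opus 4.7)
The plan is to handle the case $N_\pi = n_\pi$ where Lemma~\ref{lem:pS} cannot be applied: its hypothesis $n > n_\pi$ fails precisely at the critical level $n = N_\pi$. The tool I will substitute is the Hecke algebra isomorphism for $\U(2,1)$ established by Moy, which linearizes the problem of counting $K_n$-fixed vectors.

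First I would set $n = N_\pi = n_\pi$ and introduce the convolution algebra
\[
\He(G, K_n, \omega_\pi) = \{ f \in C_c^\infty(G) \mid f(z k g k') = \omega_\pi^{-1}(z) f(g),\ z \in Z,\ k, k' \in K_n\},
\]
which is well-defined since $\omega_\pi$ is trivial on $Z_n = Z\cap K_n$. The finite-dimensional space $V(N_\pi)$ (finite-dimensional by admissibility) carries a natural structure of module over this algebra, and the irreducibility of $\pi$ together with a standard argument shows that $V(N_\pi)$ is an irreducible $\He(G,K_n,\omega_\pi)$-module.

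Next I would invoke Moy's Hecke algebra isomorphism for $\U(2,1)$, which identifies $\He(G, K_n, \omega_\pi)$ with an analogous Hecke algebra on a proper Levi subgroup $M$ of $G$; in our rank-one setting the only option is $M = T \cong E^\times \times E^1$, equipped with an appropriate compact open subgroup $J_M$ and a character $\lambda$ of $J_M$ determined by the restriction of $\omega_\pi$. Because $T$ is abelian, the target algebra is commutative, and hence every finite-dimensional irreducible module is one-dimensional; this forces $\dim V(N_\pi) = 1$, proving (i). Moreover, under this correspondence $V(N_\pi)$ pulls back from a character of $T$, so $\pi$ appears as a subquotient of a parabolically induced representation $\mathrm{Ind}_B^G \chi$ for some quasi-character $\chi$ of $T$, and in particular $\pi$ is not supercuspidal, proving (ii).

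The main technical obstacle is invoking Moy's isomorphism in exactly the form required here: one must match the pair $(K_n, \omega_\pi)$ with the correct Moy datum on $T$---the compact open $J_M \subset T$ and the character $\lambda$ of $J_M$---and verify that the resulting transfer really does send $V(N_\pi)$ to a module over the commutative target. The guiding analogy is Casselman's proof for $\mathrm{GL}_2$, where the minimal-level fixed space is one-dimensional essentially because the ambient Hecke algebra is commutative; once Moy's isomorphism is set up in the correct form, both assertions (i) and (ii) follow immediately.
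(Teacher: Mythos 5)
You have the right big-picture idea: when $N_\pi = n_\pi$ the $P_2$/Kirillov argument breaks down and one switches to Moy's Hecke algebra isomorphism for $\U(2,1)$. But the target of the isomorphism you invoke is wrong, and this undermines both conclusions. Moy's isomorphism has the form $\He(G//J,\rho) \simeq \He(G'//J',\rho')$, where $(J,\rho)$ is built from a Moy datum $\beta$ and $G'$ is essentially the centralizer of $\beta$. In the present case the paper uses $N_\pi = n_\pi$ precisely to show that $\beta$ can be taken equal to $\p^{1-n}\,\mathrm{diag}(0, a\sqrt{\epsilon}, 0)$ with $a$ a \emph{unit}; this nondegeneracy forces $G' = HZ$ with $H = \U(1,1)(E/F)$, which is nonabelian. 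Your assertion that ``the only option is $M = T$'' is incorrect: the $\U(2,1)$ transfer in this regime does \emph{not} land on the torus. Consequently your central step --- that the target algebra is commutative, so every irreducible module is one-dimensional --- is simply not available.

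There are two further gaps. First, you form modules over $\He(G, K_n, \omega_\pi)$, which is not the algebra $\He(G//J,\rho)$ to which Moy's theorem applies; $J$ is a different compact open subgroup from the parahoric filtration, and one must check that $V(n)$ sits inside the $\rho$-isotypic component $\pi^\rho$ and that $K_n\cap H$-invariance survives the transfer --- this is genuine work done in the paper, not a formality. Second, the paper does not obtain multiplicity one from commutativity of the full transferred algebra; rather it shows the image of $V(n)$ lands inside $\tau(e_\rho)\tau^{K_n\cap H}$ and then uses that $K_n\cap H$ is a good maximal compact subgroup of $H$, so that $\dim\tau^{K_n\cap H}\leq 1$. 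For (ii), the conclusion is that $\tau$ has a $K_n\cap H$-spherical vector and hence is not supercuspidal as a representation of $H$, which via Moy's equivalence implies $\pi$ is not supercuspidal; your claim that $V(N_\pi)$ pulls back from a character of $T$ and hence $\pi \hookrightarrow \mathrm{Ind}_B^G\chi$ does not follow from the actual transfer. To repair the argument you would need to identify the correct Moy datum (which is where $N_\pi = n_\pi$ enters), transfer to $HZ$ rather than $T$, and replace the commutativity step with the uniqueness of spherical vectors for $\U(1,1)$.
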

%%%%%%%%%%%%%%%%%%%%%%%%%%%%%%%%%%
\begin{proof}
(i)
We use the Hecke algebra isomorphism 
in \cite{U21}.
Put $n = N_\pi$.
We define two open compact subgroups of $G$ by
\[
P_{3n-3}
= \left(
\begin{array}{ccc}
1+\mi_E^{n-1} & \mi_E^{n-1} & \mi_E^{n-1}\\
\mi_E^n & 1+\mi_E^{n-1} & \mi_E^{n-1}\\
\mi_E^n & \mi_E^n & 1+\mi_E^{n-1} 
\end{array}
\right)\cap G,\
P_{3n-2}
= \left(
\begin{array}{ccc}
1+\mi_E^{n} & \mi_E^{n-1} & \mi_E^{n-1}\\
\mi_E^n & 1+\mi_E^{n} & \mi_E^{n-1}\\
\mi_E^n & \mi_E^n & 1+\mi_E^{n} 
\end{array}
\right)\cap G.
\]
Since $P_{3n-2} \subset K_n$,
all elements in $V(n)$ are fixed by $P_{3n-2}$.
We have $P_{3n-3} = Z_{n-1}(T_H\cap P_{3n-3})P_{3n-2}$
and $T_H\cap P_{3n-3} \subset K_n$.
So $P_{3n-3}$ acts on $V(n)$
by an extension $\rho$ of the central character
of $\pi$.

Let $\psi_F$ be a non-trivial additive character of $F$
with conductor $\mi_F$.
For an element $\beta \in M_3(E)$,
we define a map $\psi_\beta: M_3(E) \rightarrow \C^\times$ 
by
\[
\psi_\beta(x) = 
\psi_F(\mathrm{tr}_{E/F}\circ\mathrm{tr}(\beta(x-1))),\ x \in 
M_3(E).
\]
By \cite{Morris-2} Theorem 2.13,
there exists
\[
\beta
= \p^{1-n}
\left(
\begin{array}{ccc}
A & 0 & 0\\
0 & a\e & 0\\
0 & 0 & -\overline{A}
\end{array}
\right),\ A \in \ri_E/\mi_E,\ a \in \ri_F/\mi_F
\]
such that
\[
\rho(p) = \psi_\beta(p),\ p \in P_{3n-3}.
\]
Because $\rho$ is trivial on $T_H\cap P_{3n-3}$,
we have $A \in \mi_E$.
So we may assume $A = 0$.
Since we are assuming $n = n_\pi$,
we obtain $a \in \ri_F^\times$.

We define an open compact subgroup $J$ 
of $G$ as in 
\cite{U21} p. 191.
If $n = 2m$,
we put
\[
J 
= 
\left(
\begin{array}{ccc}
1+\mi_E^{n-1} & \mi_E^{m} & \mi_E^{n-1}\\
\mi_E^m & 1+\mi_E^{n-1} & \mi_E^{m}\\
\mi_E^n & \mi_E^m & 1+\mi_E^{n-1} 
\end{array}
\right)\cap G.
\]
If $n = 2m+1$,
we set
\[
J 
= 
\left(
\begin{array}{ccc}
1+\mi_E^{n-1} & \mi_E^{m} & \mi_E^{n-1}\\
\mi_E^{m+1} & 1+\mi_E^{n-1} & \mi_E^{m}\\
\mi_E^n & \mi_E^{m+1} & 1+\mi_E^{n-1} 
\end{array}
\right)\cap G.
\]
Then 
$J$ contains $P_{3n-3}$.
We can extend $\rho$ to
a character 
of $J$ which is trivial outside of $P_{3n-3}$,
which is also denoted by $\rho$.
We put $G' = HZ$,
$J' = J\cap G'$ and $\rho' = \rho|_{J'}$.
If we  denote by $\mathcal{H}(G//J, \rho)$,
$\mathcal{H}(G'//J', \rho')$ 
the Hecke algebras of $G$, $G'$ associated to 
$(J, \rho)$,
$(J', \rho')$ respectively, then
by \cite{U21} Corollary 2.8,
there exists a support-preserving algebra isomorphism
\begin{eqnarray}\label{eq:hecke0}
i: \mathcal{H}(G//J, \rho) \simeq 
\mathcal{H}(G'//J', \rho').
\end{eqnarray}

The $\rho$-isotypic component $\pi^\rho$
of 
$\pi$
 is an irreducible
$\mathcal{H}(G//J, \rho)$-module.
There is a left ideal $I$ of $\mathcal{H}(G//J, \rho)$
such that $\pi^\rho \simeq \mathcal{H}(G//J, \rho)/I$.
By (\ref{eq:hecke0}),
there is an irreducible admissible representation $\tau$ of $G'$
such that
$\tau^{\rho'} \simeq \mathcal{H}(G'//J', \rho')/i(I)$.
Since $G' = ZH$,
we can view $\tau$ as an irreducible admissible representation of 
$H \simeq \mathrm{U}(1,1)(E/F)$.
So there is an isomorphism $i: \pi^\rho \simeq \tau^{\rho'}$
such that
\[
i(\pi(f) v) = \tau(i(f)) i(v),\ v \in \pi^\rho,\
f \in \mathcal{H}(G//J, \rho).
\]

Replacing $J$ with $\eta^{m}J\eta^{-m}$,
we may assume $J \subset K_nZ_{n-1}$.
Then one can observe that $V(n) \subset \pi^\rho$.
Let $f_\rho$ be the identity element in 
$\mathcal{H}(G//J, \rho)$.
For $g \in G$,
we denote by $\delta_g$ the Dirac point mass at $g$.
Then we have
\begin{eqnarray}\label{eq:hecke}
\pi(f_\rho*\delta_k* f_\rho)v
=
\pi(f_\rho)\pi(k)\pi(f_\rho)v
= v,\ v \in V(n),\ k \in K_n\cap H.
\end{eqnarray}
Let $V'(n)$ be the image of $V(n)$
in $\tau^{\rho'}$.
We denote by $e_\rho$ the identity element in 
$\mathcal{H}(G'//J', \rho')$.
By (\ref{eq:hecke}), 
we get
\[
\tau(e_\rho*\delta_k* e_\rho)v'
= v',\ v' \in V'(n),\ k \in K_n\cap H.
\]
So we obtain
\begin{eqnarray*}
\tau(e_\rho)
\int_{K_n\cap H} \tau(k)v'dk
& = & 
\int_{K_n\cap H}\tau(e_\rho) \tau(k)\tau(e_\rho)v'dk\\
& = & 
\int_{K_n\cap H}\tau(e_\rho*\delta_k*e_\rho)v'dk\\
& = & 
\mathrm{vol}(K_n\cap H) v',
\end{eqnarray*}
for $v' \in V'(n)$.
This implies
\[
V'(n) \subset \tau(e_\rho)\tau^{K_n\cap H},
\]
where $\tau^{K_n\cap H}$
is the space of $K_n\cap H$-fixed vectors in $\tau$.
Since $K_n\cap H$ is a good maximal compact subgroup of 
$H$,
we obtain
$\dim V(n) = \dim V'(n) \leq \dim \tau^{K_n\cap H} \leq 1$.

(ii)
We have seen that 
$\tau$ has a non-zero $K_n\cap H$-fixed vector.
Since $K_n\cap H$ is a good maximal compact subgroup of 
$H$,
$\tau$ can not be supercuspidal.
As remarked in  \cite{U21} p. 195,
this implies
that $\pi$ is not supercuspidal.
\end{proof}

Let  $(\pi, V)$ be an
irreducible supercuspidal representation of $G$.
It is well known that 
$\dim V(0) = \dim V(1) = 0$.
Therefore we obtain the following
%%%%%%%%%
\begin{cor}
Let $\pi$ be an
irreducible supercuspidal representation of $G$.

(i) Suppose that $\pi$ is generic.
Then we have $N_\pi \geq 2$ and $N_\pi > n_\pi$.

(ii) If $\pi$ is not generic,
then $\pi$ has no $K_n$-fixed vectors for all $n \geq 0$.
\end{cor}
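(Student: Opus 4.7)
The plan is to combine the hypothesis that $\dim V(0) = \dim V(1) = 0$ for supercuspidal $\pi$ with the two structural results already proved: Theorem~\ref{thm:non_gen} for the non-generic case and Theorem~\ref{thm:cent}(ii) for the generic case. The key point in both parts is that supercuspidality excludes all the conclusions of these theorems once $N_\pi \geq 2$.

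For part (i), I would argue as follows. Since $\pi$ is generic, Theorem~\ref{thm:gen_new} provides some $n$ with $V(n) \neq \{0\}$, so $N_\pi$ is well-defined. Because $V(0) = V(1) = \{0\}$, we immediately get $N_\pi \geq 2$. Suppose, toward contradiction, that $N_\pi = n_\pi$. Then $\pi$ satisfies the hypotheses of Theorem~\ref{thm:cent}(ii), which forces $\pi$ to be non-supercuspidal, contradicting our assumption. Combined with the inequality $N_\pi \geq n_\pi$ from the Remark in Section~2, this rules out equality and yields $N_\pi > n_\pi$.

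For part (ii), I would argue by contradiction. Suppose $V(n) \neq \{0\}$ for some $n \geq 0$, so that $N_\pi$ is defined. Since $\pi$ is non-generic, Theorem~\ref{thm:non_gen} forces one of $N_\pi = 0,\ N_\pi = 1,\ N_\pi = n_\pi$. The first two cases are incompatible with $\dim V(0) = \dim V(1) = 0$. For the remaining case $N_\pi = n_\pi$, we must have $N_\pi \geq 2$ (else we are back in the excluded cases), and then Theorem~\ref{thm:cent}(ii) again contradicts the supercuspidality of $\pi$. Hence no such $n$ exists.

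There is no real obstacle here: both parts are short deductions that package the statements from Section~\ref{sec:main} together with the standard fact that supercuspidal representations of $G$ have no $K_0$- or $K_1$-fixed vectors. The only minor care needed is to handle the boundary case $N_\pi = n_\pi < 2$ in part (ii), which is immediately absorbed into the cases $N_\pi \in \{0,1\}$.
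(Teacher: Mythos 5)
Your proposal is correct and follows essentially the same route as the paper: both parts are obtained by combining the vanishing $\dim V(0) = \dim V(1) = 0$ for supercuspidals with Theorem~\ref{thm:cent}(ii) (to rule out $N_\pi = n_\pi$) and, in part (ii), Theorem~\ref{thm:non_gen}. The only cosmetic difference is that you case-split on Theorem~\ref{thm:non_gen} first and then invoke Theorem~\ref{thm:cent}(ii), whereas the paper derives $N_\pi \geq 2$ and $N_\pi > n_\pi$ directly and then reads off the contradiction with Theorem~\ref{thm:non_gen}; the two are logically equivalent.
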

%%%%%%%
\begin{proof}
Part (i)  follows from Theorem~\ref{thm:cent} (ii).
Suppose that $\pi$ is non-generic and admits a newform.
Then, by Theorem~\ref{thm:cent} (ii),
we have $N_\pi \geq 2$ and $N_\pi > n_\pi$.
This contradicts Theorem~\ref{thm:non_gen}.
So if $\pi$ is not generic,
then $\pi$ has no $K_n$-fixed vectors for all $n \geq 0$.
This completes the proof of (ii).
\end{proof}

We shall prove our main theorem.
%%%%%%%%%%%%%%%%%%%%%%%%%%%%%%%%%%
\begin{thm}\label{thm:mult_main}
Let $(\pi, V)$ be an irreducible admissible representation of $G$
which admits a newform.
Then the space $V(N_\pi)$ of newforms for $\pi$
is one-dimensional.
\end{thm}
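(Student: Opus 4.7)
The plan is to reduce the theorem to the cases that have already been proved, by splitting on the value of $N_\pi$. The remark following the definition of $N_\pi$ gives $N_\pi \geq n_\pi$, which constrains the possible subcases. The three ranges to handle are $N_\pi = 0$, $N_\pi = 1$, and $N_\pi \geq 2$.

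For $N_\pi = 0$, a nonzero $K_0$-fixed vector forces $\pi$ to be non-supercuspidal, so $\pi$ embeds in some $\mathrm{Ind}_B^G \chi$. The argument of Proposition~\ref{prop:mult_1} adapts verbatim: the Iwasawa decomposition $G = BK_0$ gives a single double coset in $B\backslash G / K_0$, so $\dim (\mathrm{Ind}_B^G\chi)^{K_0} \leq 1$ and hence $\dim V(0) \leq 1$. For $N_\pi = 1$, Proposition~\ref{prop:mult_1} applies directly.

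For $N_\pi \geq 2$, there are two possibilities. If $N_\pi > n_\pi$, then $\pi$ cannot be non-generic: Theorem~\ref{thm:non_gen} shows that a non-generic representation admitting a newform must satisfy $N_\pi \in \{0,1\}$ or $N_\pi = n_\pi$. Hence $\pi$ is generic and Theorem~\ref{thm:gen_1} yields $\dim V(N_\pi) = 1$. If instead $N_\pi = n_\pi$, then whether or not $\pi$ is generic, Theorem~\ref{thm:cent}(i) gives $\dim V(N_\pi) = 1$. These cases together exhaust the possibilities (given $N_\pi \geq n_\pi$), so $\dim V(N_\pi) \leq 1$ in all cases, and equality holds by the definition of $N_\pi$.

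No new obstacles appear at this step: all of the hard work has been absorbed into Theorem~\ref{thm:gen_1} (which depended on the Kirillov-model argument via $P_2$-theory) and Theorem~\ref{thm:cent} (which invoked Moy's Hecke algebra isomorphism to transfer the counting to a good maximal compact of $H \simeq \U(1,1)(E/F)$). The only task in the present proof is to verify that the case division is exhaustive, which follows from $N_\pi \geq n_\pi$ together with Theorem~\ref{thm:non_gen}.
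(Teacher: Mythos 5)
Your proposal is correct and follows essentially the same route as the paper: split on $N_\pi \in \{0\}$, $\{1\}$, $\{\geq 2\}$, handle $N_\pi \leq 1$ via the double-coset count, and for $N_\pi \geq 2$ split on $N_\pi = n_\pi$ (Theorem~\ref{thm:cent}(i)) versus $N_\pi > n_\pi$ (Theorem~\ref{thm:non_gen} to force genericity, then Theorem~\ref{thm:gen_1}). The only cosmetic difference is that you spell out the $K_0$-fixed-vector bound by adapting Proposition~\ref{prop:mult_1}, where the paper simply cites it as well known.
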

\begin{proof}
Recall that $K_0$ is a good maximal compact subgroup of $G$.
It is well known that $\dim V(0) \leq 1$.
Due to Proposition~\ref{prop:mult_1},
we have $\dim V(1) \leq 1$.
So we may assume that $N_\pi \geq 2$.

If we further suppose that $N_\pi = n_\pi$,
then the assertion follows from 
Theorem~\ref{thm:cent} (i).
Suppose that $N_\pi > n_\pi$ and $N_\pi \geq 2$.
Then Theorem~\ref{thm:non_gen} says 
that  $\pi$ should be generic.
So we get $\dim V(N_\pi) = 1$
by Theorem~\ref{thm:gen_1}.
This completes the proof.
\end{proof}

%%%%%%%%%%%%%%%%%%%%%%%%%%%%%%%%%%
%%%%%%%%%%%%%%%%%%%%%%%%%%%%%%%%%%
\subsection{Oldforms for generic representations}

The following theorem bounds
the dimensions  of oldforms for generic representations.

%%%%%%%%%%%%%%%%%%%%%
\begin{prop}\label{prop:dim}
Let $(\pi, V)$ be an irreducible generic representation of $G$.
Then we have
\[
\dim V(n) \leq \left\lfloor \frac{n-N_\pi}{2}\right\rfloor +1,
\]
for $n \geq N_\pi$.
\end{prop}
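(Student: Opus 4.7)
The plan is a direct induction on $n$ based on two ingredients already established in the paper: the multiplicity-one result $\dim V(N_\pi) = 1$ of Theorem~\ref{thm:mult_main} (which applies here because a generic representation always admits a newform by Theorem~\ref{thm:gen_new}), and the growth estimate $\dim V(n+2) - \dim V(n) \leq 1$ of Theorem~\ref{thm:1}, valid whenever $n+2 > n_\pi$. Since the formula to prove depends on the parity of $n - N_\pi$ and Theorem~\ref{thm:1} only propagates in steps of two, I would treat the two parities as separate arithmetic progressions anchored at the base cases $n = N_\pi$ and $n = N_\pi + 1$.

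For the base case $n = N_\pi$, the inequality becomes the equality $\dim V(N_\pi) = 1$, which is exactly Theorem~\ref{thm:mult_main}. For the base case $n = N_\pi + 1$, I would distinguish two subcases. If $N_\pi \geq 1$, I apply Theorem~\ref{thm:1} with parameter $N_\pi - 1$; the hypothesis $(N_\pi - 1) + 2 = N_\pi + 1 > n_\pi$ is valid since $N_\pi \geq n_\pi$, and this gives $\dim V(N_\pi + 1) \leq \dim V(N_\pi - 1) + 1 = 1$, because $V(N_\pi - 1) = 0$ by minimality of $N_\pi$. In the remaining edge case $N_\pi = 0$, the required bound $\dim V(1) \leq 1$ is furnished directly by Proposition~\ref{prop:mult_1}.

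For the inductive step, suppose $n \geq N_\pi + 2$ and that the bound holds at $n - 2$. The hypothesis of Theorem~\ref{thm:1} at parameter $n - 2$ reads $n > n_\pi$, which follows from the chain $n \geq N_\pi + 2 > N_\pi \geq n_\pi$. Combining this with the induction hypothesis yields
\[
\dim V(n) \;\leq\; \dim V(n-2) + 1 \;\leq\; \left\lfloor \frac{n - 2 - N_\pi}{2} \right\rfloor + 2 \;=\; \left\lfloor \frac{n - N_\pi}{2} \right\rfloor + 1,
\]
which is the desired inequality.

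There is no substantial obstacle in this proof: once Theorems~\ref{thm:mult_main} and~\ref{thm:1} are available, the proposition is essentially a bookkeeping exercise. The only point demanding a moment of care is the odd-parity base case $n = N_\pi + 1$, which must be handled outside the induction because Theorem~\ref{thm:1} increments $n$ by two; as indicated above, it dissolves either into the vanishing of $V(N_\pi - 1)$ or, when $N_\pi = 0$, into Proposition~\ref{prop:mult_1}.
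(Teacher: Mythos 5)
Your proof is correct and follows essentially the same route as the paper: establish the two base cases $\dim V(N_\pi)=1$ (via Theorem~\ref{thm:mult_main}) and $\dim V(N_\pi+1)\leq 1$ (via Theorem~\ref{thm:1} when $N_\pi\geq 1$, or Proposition~\ref{prop:mult_1} when $N_\pi=0$), then iterate Theorem~\ref{thm:1} in steps of two. The paper phrases the iteration as a one-line bound $\dim V(N_\pi+\delta+2k)\leq k+\dim V(N_\pi+\delta)$ rather than an explicit induction, but the content is identical; your careful verification of the hypothesis $n+2>n_\pi$ at each application is a minor bonus the paper leaves implicit.
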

%%%%%%%%%%%%%%%%%%%%%
\begin{proof}
By Theorem~\ref{thm:mult_main},
we have $\dim V(N_\pi) = 1$.
We claim that $\dim V(N_\pi+1) \leq 1$.
If $N_\pi = 0$,
we have $\dim V(N_\pi+1) \leq 1$
by Proposition~\ref{prop:mult_1}.
Suppose that $N_\pi \geq 1$.
Then, by Theorem~\ref{thm:1},
we obtain
$\dim V(N_\pi+1) -\dim V(N_\pi-1) \leq 1$.
So we get $\dim V(N_\pi+1) \leq 1$.

For $\delta \in \{0, 1\}$ and $k \in \Z_{\geq 0}$,
we have
$\dim V(N_\pi+\delta +2k) \leq k +\dim V(N_\pi+ \delta)
\leq k + 1$,
by Theorem~\ref{thm:1}.
This completes the proof.
\end{proof}
We give a basis for oldforms
for generic representations $\pi$
which
satisfy $N_\pi \geq 2$ and $N_\pi > n_\pi$.

%%%%%%%%%%%%%%%%%%%%%
\begin{thm}\label{thm:main3}
Let $(\pi, V)$ be an irreducible generic representation of $G$.
Suppose that $W_v(1) \neq 0$
for all non-zero elements $v$ in $V(N_\pi)$.
Then, for $n \geq N_\pi$,
the set
$\{\theta^{'i} \eta^j v\ |\ i+2j+N_\pi = n\}$
forms a basis for $V(n)$.
In particular,
\[
\dim V(n) = \left\lfloor \frac{n-N_\pi}{2}\right\rfloor +1.
\]
\end{thm}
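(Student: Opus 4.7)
Write $m = n - N_\pi$. Proposition~\ref{prop:dim} already supplies the upper bound $\dim V(n) \leq \lfloor m/2\rfloor + 1$, and the proposed family $\{\theta^{'i}\eta^{j}v : i + 2j = m,\ i, j \geq 0\}$ has exactly $\lfloor m/2\rfloor + 1$ elements, each of which sits in $V(n)$ because $\theta'$ and $\eta$ raise level by $1$ and $2$ respectively. Hence the whole problem reduces to showing that these vectors are linearly independent in $V(n)$; the dimension formula and the basis assertion then follow for free.

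To prove the independence I would pass to the Kirillov-type picture of Section~\ref{subsec:kirillov}. By Corollary~\ref{cor:4.5}, for any $w$ lying in some $V(n')$ the function $\varphi_w$ is $\ri_E^\times$-invariant and supported in $\ri_E$; it is therefore determined by the sequence $c_k^{w} := \varphi_w(\p^k)$, $k \geq 0$ (extended by zero for $k < 0$). A short commutation computation, using $t(a)\eta = t(a\p^{-1})$ for the first identity and $t(a)\, u(0, y) = u(0, a\overline{a}\, y)\, t(a)$ together with $\psi|_{u(0,\cdot)} \equiv 1$ for the second, yields
\[
\varphi_{\eta w}(a) = \varphi_{w}(a\p^{-1}), \qquad \varphi_{\theta' w}(a) = \varphi_{w}(a\p^{-1}) + q\, \varphi_{w}(a),
\]
where the factor $q$ in the second formula simply counts the $q$ cosets of $\mi_F^{-1-n'}/\mi_F^{-n'}$ appearing in Proposition~\ref{prop:theta}; each summand $\pi(u(x\e))w$ contributes $\varphi_w$ because $\psi$ is trivial on the subgroup $\{u(0,y)\} \subset U$.

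Encode $\varphi_w$ as the formal power series $g_w(X) = \sum_{k \geq 0} c_k^{w} X^k \in \C[[X]]$; the two displayed identities translate into: $\eta$ acts as multiplication by $X$ and $\theta'$ as multiplication by $X + q$. Consequently
\[
g_{\theta^{'i}\eta^{j} v}(X) = (X+q)^{i}\, X^{j}\, g_{v}(X).
\]
The hypothesis $W_v(1) \neq 0$ says $g_v(0) = c_0^{v} \neq 0$, so $g_v$ is a unit in $\C[[X]]$. Any relation $\sum_{i+2j=m} a_{ij}\, \theta^{'i}\eta^{j} v = 0$ in $V(n)$ pushes forward through $w \mapsto \varphi_w$ to $\bigl(\sum_{i+2j=m} a_{ij}(X+q)^{i} X^{j}\bigr)\, g_v(X) = 0$, and division by the unit $g_v$ leaves the polynomial identity $\sum_{i+2j=m} a_{ij}(X+q)^{i} X^{j} = 0$.

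To finish, the polynomials $(X+q)^{m-2j} X^{j}$, $j = 0, 1, \ldots, \lfloor m/2\rfloor$, have pairwise distinct degrees $(m-2j) + j = m - j$ and are therefore linearly independent in $\C[X]$; this forces every $a_{ij}$ to vanish. The only step of any real content is verifying the Kirillov actions of $\eta$ and $\theta'$; once those are in hand the combinatorial degree count closes the argument, and possible non-injectivity of $w \mapsto \varphi_w$ is harmless since a relation in $V(n)$ automatically maps to a relation among the $\varphi_w$'s.
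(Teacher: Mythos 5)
Your argument is correct and rests on the same key ingredients as the paper's proof: the formulas $\varphi_{\eta w}(a)=\varphi_w(a\p^{-1})$ and $\varphi_{\theta' w}(a)=\varphi_w(a\p^{-1})+q\,\varphi_w(a)$ deduced from Proposition~\ref{prop:theta} together with the support/invariance facts of Corollary~\ref{cor:4.5}, combined with the hypothesis $W_v(1)\neq 0$ and the upper bound from Proposition~\ref{prop:dim}. The only difference is the packaging: the paper evaluates $\varphi$ at $1$ to extract $\alpha_0=0$ and then iterates, invoking commutativity and injectivity of $\eta$, whereas you encode the whole sequence $\bigl(\varphi_w(\p^k)\bigr)_{k\ge 0}$ as a formal power series so that $\eta$ and $\theta'$ become multiplication by $X$ and $X+q$, and then finish by noting that the polynomials $(X+q)^{i}X^{j}$ ($i+2j=m$) have pairwise distinct degrees — a clean one-shot version of the same coefficient-extraction argument that sidesteps the explicit use of injectivity of $\eta$.
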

%%%%%%%%%%%%%%%%%
\begin{proof}
By Proposition~\ref{prop:dim},
it is enough to prove that
$\{\theta^{'i} \eta^j v\ |\ i+2j+N_\pi = n\}$
is linearly independent.
Let $m$ be a non-negative integer.
For $v \in V(m)$,
we have
\[
\varphi_{\eta v}(1) = \varphi_v(\p^{-1}),\
\varphi_{\theta' v}(1) = \varphi_v(\p^{-1})
+ q\varphi_v(1),
\]
by Proposition~\ref{prop:theta}.
Due to Corollary~\ref{cor:4.5},
we obtain 
\[
\varphi_{\eta v}(1) = 0,\
\varphi_{\theta' v}(1) = 
 q\varphi_v(1).
\]
So for $i, j \geq 0$,
we get
\[
\varphi_{\theta^{'i} \eta^j v}(1) 
=
\left\{
\begin{array}{cc}
q^{i}\varphi_v(1), & \mathrm{if}\ j = 0;\\
0, & \mathrm{otherwise}.
\end{array}
\right.
\]

Let $v$ be a non-zero element in $V(N_\pi)$.
Suppose that 
$\sum_{i+2j+N_\pi = n}\alpha_j\theta^{'i} \eta^j v = 0$ ($\alpha_j \in \C$).
Then we have
\[
0 = \varphi_{\sum_{j}\alpha_j\theta^{'i} \eta^j v }(1)
= \alpha_0 q^{n-N_{\pi}}\varphi_v(1),
\]
so that $\alpha_0 = 0$ by assumption.
So we get 
$\sum_{i+2j+N_\pi = n, j \geq  1}\alpha_j\theta^{'i} \eta^j v = 0$.
Since $\eta$ is injective and commutes with $\theta'$,
we have
$\sum_{i+2j+N_\pi = n, j \geq  1}\alpha_j\theta^{'i} \eta^{j-1} v = 0$.
Repeating this argument,
we obtain $\alpha_j = 0$, for all $j$.
\end{proof}

%%%%%%%%%%
\begin{rem}
Suppose that an irreducible generic representation $\pi$
of $G$ satisfies 
$N_\pi \geq 2$ and $N_\pi > n_\pi$.
Then Theorem~\ref{thm:test}
says that
the assumption of Theorem~\ref{thm:main3} holds for $\pi$.
\end{rem}

%%%%%%%%%%%%%%%%%%%%%%%%%%%%%%%%%%
\subsection{Oldforms for non-generic representations}
We close this paper with a result on the 
possibilities of the dimensions of oldforms for 
non-generic representations.

%%%%%%%%%%%%%%%%
\begin{thm}
Let $(\pi, V)$ be an irreducible non-generic representation of 
$G$
which admits a newform.
For any non-negative integer $k$,
the following holds.

(i)
$\dim V(N_\pi+2k) = 1$.

(ii)
If $N_\pi \geq 1$,
then
$\dim V(N_\pi+2k+1) = 0$.

(iii)
Suppose that $N_\pi = 0$.
Then
$\dim V(2k+1) = \dim V(1)$.
\end{thm}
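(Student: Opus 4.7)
The plan is to deduce everything by iterating the isomorphism $\eta : V(n-2) \xrightarrow{\sim} V(n)$ furnished by Lemma~\ref{lem:non-generic} (noting that $\pi(\eta)$ is automatically injective since $\eta \in G$, so the content of that lemma is surjectivity in the stated range $n \geq 2$ and $n > n_\pi$). All three parts will be reduced to the base spaces $V(N_\pi)$ and $V(N_\pi - 1)$, whose dimensions we already know: $\dim V(N_\pi) = 1$ by the multiplicity one Theorem~\ref{thm:mult_main}, while $V(N_\pi - 1) = \{0\}$ by definition of $N_\pi$ whenever $N_\pi \geq 1$.

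The first step will be to invoke Theorem~\ref{thm:non_gen} to restrict $N_\pi$ to one of $0$, $1$, or $n_\pi$. In every one of these cases $n_\pi \leq N_\pi$, and more importantly, for every integer $n > N_\pi$ one automatically has $n > n_\pi$. This is the only place where non-genericity enters in a subtle way, and it guarantees that the hypotheses of Lemma~\ref{lem:non-generic} are satisfied at every stage of the induction below.

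For part (i) I would induct on $k$: the base case $k = 0$ is Theorem~\ref{thm:mult_main}, and for $k \geq 1$ the index $n = N_\pi + 2k$ satisfies $n \geq 2$ and $n > n_\pi$, so Lemma~\ref{lem:non-generic} gives an isomorphism $V(n) \simeq V(n-2)$ via $\eta$, whose dimension is $1$ by induction. For part (ii), assuming $N_\pi \geq 1$, the same lemma, applied $k+1$ times, yields $V(N_\pi + 2k + 1) \simeq \eta^{k+1} V(N_\pi - 1)$; the smallest index encountered in the chain is $N_\pi + 1 \geq 2$, and it satisfies $N_\pi + 1 > n_\pi$ since $n_\pi \leq N_\pi$, so every application of the lemma is legitimate. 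Since $V(N_\pi - 1) = \{0\}$, the whole chain vanishes. For part (iii), $N_\pi = 0$ forces $n_\pi = 0$, and for $k \geq 1$ the index $2k + 1 \geq 3 > n_\pi$, so the lemma identifies $V(2k+1)$ with $V(2k - 1)$, and iterating down to $k = 0$ gives $\dim V(2k+1) = \dim V(1)$.

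There is no serious obstacle here: the theorem is essentially a formal corollary of Lemma~\ref{lem:non-generic} and Theorem~\ref{thm:non_gen}. The only point that requires care is verifying, via the three-case analysis of $N_\pi$, that the hypotheses $n \geq 2$ and $n > n_\pi$ of Lemma~\ref{lem:non-generic} hold at every step of each induction, especially in part (ii) where the chain descends all the way to $V(N_\pi - 1)$.
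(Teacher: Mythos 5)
Your proposal is correct and follows essentially the same route as the paper: both arguments rest on iterating the isomorphism $V(n)\simeq\eta V(n-2)$ from Lemma~\ref{lem:non-generic} down to the base spaces $V(N_\pi)$ (which is one-dimensional by Theorem~\ref{thm:mult_main}) and $V(N_\pi-1)=\{0\}$. Your detour through Theorem~\ref{thm:non_gen} to check $n>n_\pi$ at each step is harmless but unnecessary, since the inequality $N_\pi\geq n_\pi$ holds for every representation by the remark following the definition of the conductor.
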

%%%%%%%%%%%%%%%%%
\begin{proof}
(i)
Lemma~\ref{lem:non-generic} implies
$V(N_\pi+2k) = \eta^k V(N_\pi)$.
So the assertion follows from Theorem~\ref{thm:mult_main}.
(ii)
Suppose that $N_\pi \geq 1$.
Lemma~\ref{lem:non-generic} says that
$V(N_\pi+2k+1) = \eta^{k+1} V(N_\pi-1) = \{0\}$.
(iii)
By
Lemma~\ref{lem:non-generic}, we have
$V(2k+1) = \eta^{k} V(1)$.
\end{proof}

\end{document}